\newtheorem{thm}{Theorem}
\newtheorem{corr}[thm]{Corollary}
\newtheorem{lem}[thm]{Lemma}
\newtheorem{prop}[thm]{Proposition}
\def\erfc{\mathrm{erfc}}
\DeclareMathOperator{\var}{var}
\def\cbes#1#2#3{\xi^{(#1:#2\to#3)}}
\def\cbro#1#2#3{B^{(#1:#2\to#3)}}
\def\bes #1{\xi^{(#1)}}
\def\bro#1{B^{(#1)}}
\def\qop#1#2#3{q(#1,#2,#3)}
\def\tbes#1{\tilde \xi^{(#1)}}
\def\tcbes#1#2#3{\tilde\xi^{(#1:#2\to#3)}}
\def\R{\mathbb{R}}
\def\P{\mathbb{P}}
\def\E{\mathbb{E}}
\def\diffd{\mathrm{d}}
\newcommand{\indic}[1]{\mathbbm{1}\raisebox{-.4ex}{$\scriptstyle\{#1\}$}}
\title{Vanishing corrections for the position in a
linear model of FKPP fronts}
\author{Julien Berestycki, \'Eric Brunet,  Simon C.~Harris, Matthew I.~Roberts}
\date{\today}
\begin{document}
\maketitle

\begin{abstract}
Take the linearised FKPP equation
$
\partial_t h =\partial^2_x h +h
$
with boundary condition $h(m(t),t)=0$. Depending on the behaviour of the
initial condition $h_0(x)=h(x,0)$ we obtain the asymptotics ---~up to a
$o(1)$ term $r(t)$~--- of the absorbing boundary $m(t)$ such that $\omega(x) :=
\lim_{t\to\infty} h(x+m(t) ,t)$ exists and is non-trivial. In particular, as in
Bramson's results for the non-linear FKPP equation, we recover the
celebrated $-3/2\log t$ correction for initial conditions decaying faster
than $x^{\nu}e^{-x}$ for some $\nu<-2$.

Furthermore, when we are in this regime, the main result of the present
work is the identification (to first order) of the $r(t)$ term which
ensures the fastest convergence to $\omega(x)$. When $h_0(x)$ decays faster
than   $x^{\nu}e^{-x}$ for some $\nu<-3$, we show that $r(t)$ must be
chosen to be $-3\sqrt{\pi/  t}$ which is precisely the term predicted
heuristically by Ebert-van Saarloos \cite{Ebert2000} in the non-linear
case (see also \cite{Mueller2014,Brunet2015,Henderson2014}).  When the initial
condition decays as  $x^{\nu}e^{-x}$ for some $\nu\in[-3,-2)$, we show that
even though we are still in the regime where Bramson's correction is
$-3/2\log t$, the Ebert-van Saarloos correction has to be modified.

Similar results were recently obtained by Henderson \cite{Henderson2014} using
an analytical approach and only for compactly supported initial conditions.
\end{abstract}

\section{Introduction}

The celebrated Fisher-Kolmogorov-Petrovsky-Piscounof equation (FKPP)
in one dimension for $h:\R\times\R^+\to \R$ is:
\begin{equation}
\partial_t h = \partial_x^2 h + h -h^2,
\qquad h(x,0)=h_0(x).
\label{FKPP}
\end{equation}
This equation is a natural description of a reaction-diffusion
model \cite{Fisher1937, Kolmogorov1937, Aronson1978}. It is also related to branching Brownian motion: for the
Heaviside initial condition $h_0(x)=\indic{x<0}$, $h(x,t)$ is the
probability that the rightmost particle at time~$t$  in a branching
Brownian motion (BBM) is to the right of~$x$.

For suitable initial conditions where $h_0(x)\in[0,1]$, $h_0(x)$ goes to
1 fast enough as $x\to -\infty$ and $h_0(x)$ goes to 0 fast enough as $x\to\infty$, it is known that $h(x,t)$ develops into a travelling
wave: there exists a centring term $m(t)$ and an asymptotic shape
$\omega_v(x)$ such that
\begin{equation}
\lim_{t\to\infty} h\big(m(t)+x,t\big) = \omega_v(x)\in(0,1),
\label{limitTW}
\end{equation}
where $m(t)/t\to v$ and $\omega_v(x)$ is a travelling wave solution
to~\eqref{FKPP} with velocity~$v$: that is, the unique (up to
translation) non-trivial solution to 
\begin{equation}
\omega_v''+ v\,\omega_v'+\omega_v-\omega_v^2=0
\end{equation}
with $\omega_v(-\infty)=1$ and $\omega_v(+\infty)=0$.

In his seminal works \cite{Bramson83}, Bramson showed how the initial
condition $h_0$ (and in particular its large~$x$ asymptotic behaviour)
determines $m(t)$ in \eqref{limitTW}. For the important example
$h_0(x)=\indic{x<0}$ corresponding to the rightmost particle in BBM, he
finds
\begin{equation}
m(t)=2t-\frac32\log t + a +o(1)
\label{BramsonEx}
\end{equation}
for some constant~$a$, and a limiting travelling wave with (critical) speed
$v=2$. (Here and throughout, we use the notation $f(t)=o(1)$ to mean that
$f(t)\to 0$ as $t\to\infty$.)

What makes Bramson's results extremely interesting is their universality;
for instance Bramson proves \cite{Bramson83} that the previous result
still holds if the reaction term $h-h^2$ in \eqref{FKPP} is replaced by
$f(h)$ with $f(0)=f(1)=0$, $f'(0)=1$ and $f(x)\le x$. The universality goes
further than that,  and for many other front equations, it is believed and
sometimes known that the centring term~$m(t)$ follows the same kind
of behaviour as for~\eqref{FKPP}: one needs to compute a function
$v(\gamma)$ which has a minimum $v_c$ at a point $\gamma_c$ (in the FKPP
case \eqref{FKPP}, $v(\gamma)=\gamma+1/\gamma$, $\gamma_c=1$, $v_c=2$);
then for an initial condition decreasing like $e^{-\gamma x}$, the front
converges to a travelling wave with velocity $v(\gamma)$ if $\gamma\le
\gamma_c$ and critical velocity $v_c$ if $\gamma\ge\gamma_c$.

When the centring term $m(t)$ is defined as in \eqref{limitTW}, it is not
uniquely determined: if $m(t)$ is any suitable centring term, then $m(t)+o(1)$
is also a suitable centring term.
Instead one can try to give a more precise definition for $m(t)$. For example, one could reasonably ask for 
\begin{equation}
h\big(m(t),t\big)=\alpha\ \text{for some $\alpha\in(0,1)$}
\quad\text{or}\quad \partial_x^2h\big(m(t),t\big)=0
\quad\text{or}\quad m(t)=-\int\diffd x \, x \partial_x h(x,t)
\label{possiblemt}
\end{equation}
in addition to \eqref{limitTW}. In the case $h_0(x)=\indic{x<0}$, so that
$h(x,t)=\P(R_t>x)$ where $R_t$ is the position of the rightmost particle in
a BBM at time~$t$,
the first definition in~\eqref{possiblemt} would be the $\alpha$-quantile
of $R_t$, the second definition would be
the mode of the distribution of $R_t$, and the third definition would be the
expectation of $R_t$.

It has been heuristically argued
\cite{Ebert2000,Mueller2014,Henderson2014,Brunet2015}
that any quantity $m(t)$ defined as in \eqref{possiblemt}
behaves for large~$t$ as
\begin{equation}
m(t)=v_c t-\frac3{2\gamma_c}\log t +a -3\sqrt{\frac{2\pi}{\gamma_c^5
v''(\gamma_c)}}\times\frac1{\sqrt t}
+ o\Big(\frac1{\sqrt t}\Big),
\label{Ebert}
\end{equation}
for any front equation of the FKPP type and for any initial condition that
decays fast enough.
In the FKPP case \eqref{FKPP}, one has $\gamma_c=1$ and $v''(\gamma_c)=2$ so that $m(t)=2t-(3/2)\log t+a-3\sqrt{\pi/t} + o(1/\sqrt t)$.

Heuristically, the coefficient of the $1/\sqrt t$ term does not
depend on the precise definition of $m(t)$ because the front $h(x,t)$
converges very quickly to its limiting shape in the region where $h$ is
neither very close to 0 nor very close to 1, so that the difference between
any two reasonable definitions of $m(t)$ converges quickly (faster than $1/\sqrt t$) to some
constant. Note that the constant term ``$a$'' is expected to be
non-universal and to depend on the model, the initial condition and the
precise definition of $m(t)$.

As argued in \cite{Ebert2000}, the reason why the ``$\log t$'' and the ``$1/\sqrt t$'' terms in
\eqref{Ebert} are so universal is that they are driven by
the way the front develops very far on the right, in a region where it is
exponentially small and where understanding the position~$m(t)$
of the front is largely a matter of solving the linearised front equation.

However there is a catch: solving directly the linearised equation $\partial_t h=\partial_x^2 h +h$ with
(for instance) a step initial condition $h_0(x)=\indic{x<0}$, one finds
$h_\text{linear}(x,t)=\frac12 e^t \erfc(x/\sqrt{4t})$. Defining the
position $m(t)$ by $h_\text{linear}\big(m(t),t\big)=1$ gives 
$m(t)=2t-\frac12\log t+a+\mathcal O\big((\log^2 t)/t\big)$ rather
than~\eqref{BramsonEx}; the linearised
equation has the same velocity $2$ as for the FKPP equation, a logarithmic
correction but with a different prefactor and no $1/\sqrt t$ correction.
The problem is that with the linearised equation, the
$h_\text{linear}(x,t)$ increases exponentially on the left of $m(t)$ and
this ``mass'' pushes the front forward, leading to a $-\frac12\log t$ rather
than a $-\frac32\log t$ correction. This means that in order to recover the
behaviour of $m(t)$ for the FKPP equation, one must have a front
equation with some saturation mechanism on the left. The behaviour of $m(t)$
is not expected to depend on which saturation mechanism is chosen, but one
must be present.
For these reasons, we consider in this paper a linearised FKPP with
a boundary on the left, as in \cite{Henderson2014}.

We emphasize that, in the present work, the FKPP equation is only
a motivation: we do not attempt to establish the equivalence between the
FKPP equation and the linear model with a boundary. Our results are proved
only for the linear model with boundary, and we can only conjecture that
they do apply to the FKPP equation.

\section{Statement of the problem and main results}

We study the following linear partial differential equation with
initial condition $h_0(x)$ and a given boundary~$m:[0,\infty)\to\R$:
\begin{equation}
\begin{cases}
\partial_t h =\partial_x^2 h + h&\text{for $x>m(t)$},\\
h\big(m(t),t\big)=0,& h(x,0)=h_0(x).
\end{cases}
\label{eqboundary}
\end{equation}
Observe that without loss of generality we can (and will) insist that $m(0)=0$ since otherwise we can simply shift the reference frame by $m(0)$ by the change of coordinate $x \mapsto x-m(0)$.

The same system was studied in \cite{Henderson2014} by PDE methods for
compactly supported initial conditions. In this paper, we use probabilistic
methods, writing the solution of the heat equation as an expectation
involving Brownian motion with a killing boundary. We give more general
results, in particular lifting the compactly supported hypothesis.

If the boundary is linear, $m(t)=vt$, the problem is easily solved explicitly. However, as soon as $m(t)$
is no longer linear, gaining any explicit information about the solution is known to be hard (see for instance \cite{HerrmannTanre2015}) and there are few available results.

Motivated by the earlier FKPP discussion about convergence to a travelling
wave as in \eqref{limitTW}, we are looking for functions
$m:[0,\infty)\to\R$ and $\omega:[0,\infty)\to[0,\infty)$ such that
\begin{equation}
\lim_{t\to\infty} h\big(m(t)+x,t\big) =\omega(x) \,\,\text{ for all } x\geq 0
\label{limitTW2}
\end{equation}
with $\omega$ non-trivial, $\omega(0)=0$ and $\omega(x)>0$ for all $x>0$. Note that such a function $\omega$ necessarily satisfies
\begin{equation}
\omega''(x) + v\omega'(x) + \omega(x) = 0, \quad \forall x\ge 0.
\end{equation}
In this case, the boundary condition anchors the front. Requiring the
convergence of $h(m(t)+x,t)$ to a limiting shape means that $m(t)$ must
increase fast enough to prevent the mass near the front from growing
exponentially, but not so fast that it tends to zero. This provides
a saturation mechanism, and even though it might seem very unlike FKPP
fronts to have $h\big(m(t),t\big)=0$, as discussed earlier we do expect the
two systems to behave similarly.

\vspace{3mm}

\noindent
Throughout the article we use the following notation:
\vspace{-1.5mm}
\begin{itemize}
\item $f(x)\sim g(x)$ means $f(x)/g(x)\to1$ as $x\to\infty$;
\vspace{-2mm}
\item $f(x)=\mathcal O\big(g(x)\big)$ means there exists $C>0$ such that $|f(x)|\leq C |g(x)|$ for all large $x$;
\vspace{-2mm}
\item $f(x)=o\big(g(x)\big)$ means $f(x)/g(x)\to 0$ as $x\to\infty$.
\vspace{-2mm}
\item A random variable $G$ is said to have ``Gaussian tails'' if there
exist two positive constants $c_1, c_2$ such that $\P(|G|>z)\le c_1
\exp(-c_2 z^2)$ for all $z\ge0$.
\end{itemize}

\noindent
Our first theorem recovers the analogue of
Bramson's results for the system \eqref{eqboundary}, \eqref{limitTW2}.

\begin{thm}\label{thm1}
For each of the following bounded initial conditions $h_0$,
a twice continuously differentiable function $m(t)$ such that $m(0)=0$ and
 $m''(t)=\mathcal O(1/t^2)$ 
leads to a solution $h(x,t)$ to
\eqref{eqboundary} with a non-trivial limit~\eqref{limitTW2}
if and only if $m(t)$ has the following large time asymptotics where $a$ is
an arbitrary constant:

\begin{subequations}
\smallskip \noindent(a)\quad if $h_0(x) \sim A x^\nu e^{-\gamma x}$ with
$0<\gamma<1$ for large~$x$,
\begin{equation}
\begin{gathered}
m(t)=\Big(\gamma+\frac1\gamma\Big) t +\frac\nu\gamma\log t+ a + o(1),
\\\qquad\text{and then}\quad
\omega(x) = \alpha \Big(e^{-\gamma x}-e^{-\frac x\gamma}\Big)
\quad\text{with}\quad\alpha=
A e^{-\gamma a} \left(\frac1\gamma-\gamma\right)^\nu.
\end{gathered}
\label{thm1a}
\end{equation}

\smallskip \noindent(b)\quad if $h_0(x) \sim A x^\nu e^{- x}$ with
$\nu>-2$ for large~$x$,
\begin{equation}
\begin{gathered}
m(t)=2 t -\frac{1-\nu}2\log t +  a + o(1),
\\\qquad\text{and then}\quad
\omega(x) = \alpha x e^{- x}
\quad\text{with}\quad\alpha=\frac {Ae^{-a}}{\sqrt{\pi}}
2^\nu\Gamma\Big(1+\frac\nu2\Big) .
\end{gathered}
\label{thm1b}
\end{equation}

\smallskip \noindent(c)\quad if $h_0(x) \sim A x^{-2} e^{- x}$ for
large~$x$,
\begin{equation}
\begin{gathered}
m(t)=2 t -\frac32\log t +  \log\log t+ a + o(1),
\\\qquad\text{and then}\quad
\omega(x) = \alpha x e^{- x}
\quad\text{with}\quad\alpha=\frac {Ae^{-a}}{4\sqrt{\pi}}.
\end{gathered}
\label{thm1c}
\end{equation}

\smallskip \noindent(d)\quad if $h_0(x) ={\mathcal O}\big( x^\nu e^{-
x}\big)$ with $\nu<-2$ for large~$x$ and 
such that the value of $\alpha$ below is non-zero,
\begin{equation}
\begin{gathered}
m(t)=2 t -\frac32\log t +  a + o(1),
\\\text{and then}\quad
\omega(x) = \alpha x e^{- x}
\quad\text{with}\quad\alpha=
\frac{e^{-a-\Delta}}{2\sqrt\pi}
\int_{0}^\infty\diffd y\, h_0(y) y e^y
\psi_\infty(y),
\end{gathered}
\label{thm1d}
\end{equation}
where $\Delta$ and $\psi_\infty$ are quantities depending on the whole
function $m$ (and not only the asymptotics)
which are introduced (in \eqref{defDelta} and \eqref{limpsi}) in the
proofs.
\end{subequations}
\end{thm}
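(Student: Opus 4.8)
\emph{Feynman--Kac representation.} The plan is to base everything on a single probabilistic identity. Since $u:=e^{-t}h$ solves $\partial_t u=\partial_x^2u$ on $\{x>m(t)\}$ with $u(m(t),t)=0$ and $u(\cdot,0)=h_0$, I would run a Brownian motion $B$ with generator $\partial_x^2$ (so that $B_t$ has variance $2t$) backwards in time from the space--time point $(m(t)+x,t)$ and kill it when it meets the curve $s\mapsto m(t-s)$; writing $\tau$ for this killing time one gets
\begin{equation}
h\big(m(t)+x,t\big)=e^{t}\,\E_{m(t)+x}\big[h_0(B_t)\,\indic{\tau>t}\big].
\label{eq:FKrep}
\end{equation}
Everything then reduces to understanding a Brownian motion started at height $\approx vt$ (with $v=\gamma+1/\gamma$ in case (a) and $v=2$ in cases (b)--(d)), constrained to stay above $s\mapsto m(t-s)$ and reweighted at its endpoint by $h_0(B_t)$; on the dominant event $B_t$ sits at height $\asymp t$ above $m(0)=0$ in case (a) but only $o(t)$ in cases (b)--(d), and it is this latter, critical, case in which the walk is squeezed against the barrier and the $-\tfrac32\log t$ correction arises.

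\emph{Straightening the line and absorbing $e^{t}$.} In the critical cases (b)--(d), where $v=2$, I would substitute $Z_s:=B_s-2(t-s)$. This makes the barrier flat at the level of $g(s):=m(t-s)-2(t-s)$, which by $m''=\mathcal O(1/t^2)$ is slowly varying of size $\mathcal O(\log t)$ with $g'(s)=2-m'(t-s)=\mathcal O(1/(t-s))$. Now $Z$ has drift $+2$, and a Girsanov transform removing it produces a Radon--Nikodym factor $e^{Z_t-Z_0-t}$ which cancels the $e^t$ of \eqref{eq:FKrep}; extracting the $e^{-Z_0}$ from the Gaussian density and pairing $e^{Z_t}$ with $h_0(Z_t)$ leaves the schematic identity
\begin{equation}
h\big(m(t)+x,t\big)=e^{-x-a}\,t^{\,c}\;\E_{Z_0}\big[h_0(Z_t)\,e^{Z_t}\,\indic{Z_s\ge g(s)\ \forall s\le t}\big],
\label{eq:reduced}
\end{equation}
where now $Z$ is again a (driftless) Brownian motion with generator $\partial_x^2$ started at $Z_0=x+a-c\log t$, and $c$ is the coefficient of $-\log t$ in $m$ (with an extra factor $1/\log t$ present when $m$ carries the $\log\log t$ term of case (c)). In case (a) one has $v^2/4>1$; an analogous tilt leaves a residual exponential factor, but this is exactly compensated by the decay of $h_0$ along the Brownian trajectory that descends at rate $2\gamma$ and ends at height $(1/\gamma-\gamma)t$ above $m(0)$. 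Here the barrier has no effect on the exponential rate; its only role is a reflection/scale-function term multiplying the answer by $1-e^{-(1/\gamma-\gamma)x}$, which turns $e^{-\gamma x}$ into $e^{-\gamma x}-e^{-x/\gamma}$ --- the two roots of $r^2+vr+1=0$.

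\emph{Barrier estimates and matching.} I would then analyse \eqref{eq:reduced} by conditioning the driftless $Z$ to stay above the slowly varying barrier $g$. To the order required, near the endpoint (recall $g(t)=m(0)=0$) this is a Brownian motion started at $x$ conditioned to stay positive --- a Bessel(3)-type process --- whose transition density to level $y$ at time $t$ is of order $x\,y\,t^{-3/2}e^{-y^2/4t}$, the non-flat part of $g$ contributing only a multiplicative constant $e^{-\Delta}$ and an endpoint correction $\psi_\infty(y)$ in place of the factor $y$ (these are the $\Delta$ and $\psi_\infty$ of the statement). Substituting into \eqref{eq:reduced} yields
\begin{equation}
h\big(m(t)+x,t\big)\ \approx\ \frac{x\,e^{-x-a-\Delta}}{2\sqrt{\pi}}\;t^{\,c-3/2}\int_0^{\infty}\diffd y\;h_0(y)\,y\,e^{y}\,\psi_\infty(y)\,e^{-y^2/4t},
\end{equation}
and the behaviour of the right-hand side is dictated by the decay of $h_0$: if $\nu<-2$ the integral converges, forcing $c=3/2$ and giving \eqref{thm1d}; if $\nu>-2$ the integral grows like $\int_0^{\sqrt t}y^{1+\nu}\diffd y\asymp t^{(2+\nu)/2}$ (with the precise constant emerging from $\int_0^\infty y^{1+\nu}e^{-y^2/4t}\diffd y$ as a Gamma value), forcing $c=(1-\nu)/2$ and giving \eqref{thm1b}; if $\nu=-2$ it grows like $\tfrac12\log t$, forcing $c=3/2$ together with the $+\log\log t$ counterterm to cancel the surplus logarithm, giving \eqref{thm1c}. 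In each case the leftover additive constant is the free parameter $a$, the $x$-dependence is $\omega(x)=\alpha x e^{-x}$ (or $\alpha(e^{-\gamma x}-e^{-x/\gamma})$ in case (a)), and matching the remaining constants gives the stated $\alpha$. For the ``only if'' direction I would use the same representation: if the logarithmic coefficient of $m$ is wrong, or $m$ has an oscillating or divergent $\mathcal O(1)$ part, then the prefactor $t^{c-3/2}$ (or its analogue together with $e^{-Z_0}$) forces $h(m(t)+x,t)$ to $0$ or to $\infty$, or to oscillate, so the stated asymptotics are necessary.

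\emph{Main obstacle.} The hard part will be making the barrier estimates uniform. The curve $g(s)=m(t-s)-v(t-s)$ depends on $t$, is only slowly varying, and must be controlled simultaneously near $s=0$ (where $g\asymp\log t$ and $Z$ starts a bounded distance above it), near $s=t$ (where $h_0$ lives and everything is $\mathcal O(1)$) and throughout the bulk. Proving that $m''=\mathcal O(1/t^2)$ is precisely what makes the survival probabilities and transition densities of $Z$ above $g$ coincide with the flat-barrier Bessel(3) asymptotics up to $(1+o(1))$ factors --- and, in case (d), that the functionals $\Delta$ and $\psi_\infty$ genuinely converge along the trajectory of $m$ rather than merely oscillating --- is the technical heart of the argument.
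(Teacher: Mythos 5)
Your outline is, modulo time reversal, the paper's own route: a killed--Brownian-motion representation (yours runs the motion backwards from $(m(t)+x,t)$, the paper runs it forwards from the point $y$ where $h_0$ lives, as in \eqref{q1}); a Girsanov tilt that straightens the boundary and absorbs $e^t$ (the paper's \eqref{q2}--\eqref{q4}); a Bessel-3 description of the path near the barrier producing the factors $e^{-\Delta}$ and $\psi_\infty(y)$; and finally a Laplace-type evaluation of the integral against $h_0$ with exponent matching to pin down the coefficient of $\log t$ and the constant $\alpha$, exactly as in \eqref{hH}--\eqref{equivH1}. Your schematic reduced identity agrees quantitatively with the paper's \eqref{hH}--\eqref{defH} (including the $\tfrac1{2\sqrt\pi}t^{c-3/2}$ prefactor and the recovery of the stated $\alpha$'s, and the ``only if'' is handled the same way, by reading off which $\delta(t)$ kills the residual $t$-dependence), so nothing in the plan is wrong in principle.

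The genuine gap is the one you flag yourself and then leave untouched: the entire quantitative content of the theorem sits in the claim that the curved barrier contributes \emph{exactly} a deterministic factor $e^{-\Delta}$ together with a correction $\psi_\infty(y)$, with enough uniformity in $y$ and $t$ to be integrated against $h_0$. In the paper this is Sections 3--5: the coupling and Gaussian-tail fluctuation estimates for Bessel bridges (Lemmas~\ref{beslem} and~\ref{bessfluc}), which give the uniform two-sided bounds $K_1\le\psi_t(y,x)\le K_2$ and the $x$-independence in Proposition~\ref{easybounds}; the existence of $\psi_\infty(y)=\lim_t\psi_t(y)$ and, crucially, the \emph{joint} limit $\psi_t(y_t)\to e^{\Delta}$ along $y_t\to\infty$ --- your sketch inserts $\psi_\infty(y)$ at scales $y\asymp\sqrt t$ (cases (b),(c)) and $y\asymp t$ (case (a)), which silently uses exactly this uniform statement; and the three-regime asymptotics of $\int\diffd y\,\phi(y)e^{-y^2/4t+\epsilon_t y}\psi_t(y)$ (Lemma~\ref{lem:calculus}), together with control of the $\sinh$ versus its linearisation and of the $e^{\delta(t)y/2t}$ factor, which is what converts your exponent count into \eqref{thm1a}--\eqref{thm1d}. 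In your time-reversed formulation these estimates must moreover be proved for a $t$-dependent barrier $g(s)=m(t-s)-2(t-s)$ that rises by $\mathcal O(\log t)$ and must be controlled simultaneously at both endpoints; saying that $m''=\mathcal O(1/t^2)$ ``is precisely what makes'' the flat-barrier Bessel asymptotics valid names the difficulty rather than resolving it (the paper resolves it by building all bridges and Bessel processes from one driving Brownian motion and proving the Gaussian-tail bound of Lemma~\ref{bessfluc}, which is what licenses every dominated-convergence step, including the saddle-point analysis at $y=\lambda t$ in case (a) that your scale-function remark does not supply). Until those estimates are supplied, the proposal is a correct strategy sketch of the paper's argument, not a proof.
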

\textbf{Remarks.} 
\begin{itemize}
\item
From the probabilistic representation of $h(x,t)$ written later in the paper \eqref{q1},
it is clear that the solution $h(x,t)$ to \eqref{eqboundary}
must be an increasing function of $h_0$ and a decreasing function of
$m$ (in the sense that if $m^{(1)}(t)\ge m^{(2)}(t)$ for all $t$, then
$h^{(1)}(x,t)\le h^{(2)}(x,t)$ for all $x$ and $t$). This implies that the
$\alpha$ given in Theorem~\ref{thm1} must be increasing functions of $h_0$
and decreasing functions of $m$. This was obvious from the explicit
expression of $\alpha$ in cases (a), (b) and (c). In case (d), given the
complicated expressions for $\Delta$ and $\psi_\infty$, it is not obvious
at all from its expression that $\alpha$ decreases with $m$.
\item
Consider now a twice differentiable function $m$ without the assumption that
 $m''(t)=\mathcal O(1/t^2)$. The monotonicity of $h(x,t)$ with respect to
$m$ still holds, and by sandwiching such a $m$ between two sequences of
increasingly close functions that satisfy the $\mathcal O(1/t^2)$
condition, one can show easily in cases (a), (b) and (c) that
if $m$ has the correct asymptotics, then $h\big(m(t)+x,t\big)$
converges as in Theorem~\ref{thm1}. Case (d) is more difficult as both
$\Delta$ and $\psi_\infty$ might be ill defined when one does not assume
 $m''(t)=\mathcal O(1/t^2)$.
 \end{itemize}

\vspace{3mm}

We now turn to the analogue of the Ebert-van Saarloos correction 
\eqref{Ebert} for our model~\eqref{eqboundary}. As explained in the introduction and shown in Theorem \ref{thm1}, with a characterization as in
\eqref{limitTW2}, $m(t)$ is only determined up to $o(1)$. If we wish to improve upon Theorem \ref{thm1}, then we need a more precise definition for $m(t)$, analogous to
\eqref{possiblemt}. Natural
possible definitions could be
\begin{equation}
h\big(m(t)+1,t\big)=1\qquad\text{or}\qquad
\partial_x h\big(m(t),t\big)=1.
\label{possibmt}
\end{equation}
However, it is not obvious that such a function $m(t)$ even exists, would be unique or differentiable. We are furthermore interested only in the long time
asymptotics of $m(t)$. Therefore, instead of requiring something
like~\eqref{possibmt} we rather look, as in \cite{Henderson2014}, for the
function $m(t)$ such that the convergence \eqref{limitTW2} is as fast as
possible.

Our main result, Theorem \ref{thm2}, tells us how fast
$h\big(m(t)+x,t\big)$ converges for suitable choices of $m$ in case (d) of
Theorem \ref{thm1}. This case is the most classical as it contains, for
example, initial conditions with bounded support. It is the case studied by
Ebert-Van Saarloos and Henderson, and is the case for which universal
behaviour is expected. Theorem \ref{thm2} is followed by two corollaries
that highlight important consequences.

\begin{thm}\label{thm2}
Suppose that $h_0$ is a	bounded function
such that $h_0(x) = \mathcal O\big(x^\nu e^{-x}\big)$ for large~$x$
for some $\nu<-2$, and such that $\alpha$ defined in~\eqref{thm1d} is
non-zero. Suppose also that
$m$ is twice continuously differentiable with
\begin{equation}
m(t) = 2t -\frac32\log (t+1) + a + r(t)
\label{thm2:condm1}
\end{equation}
where $r(0)=-a$, $r(t)\to 0$  as $t\to\infty$ and $r''(t) = \mathcal
O(t^{-2-\eta})$ for large~$t$ for some $\eta>0$. Then for any $x\geq0$,
\begin{equation}
\label{thm2:main}
h\big(m(t)+x,t\big) = \alpha x e^{-x} \left[
1-r(t)-\frac{3\sqrt\pi}{\sqrt t}
+\mathcal O \left({t^{1+\frac\nu2}}\right)
+\mathcal O \left(\frac1{t^{\frac12+\eta}}\right)
+\mathcal O\left(\frac{\log t}{t}\right)
+\mathcal O\big(r(t)^2\big)
\right]
\end{equation}
with $\alpha$ as in \eqref{thm1d}.\\
If we further assume that $h_0(x)\sim A x^\nu e^{-x}$ for large~$x$ for
some $A>0$ and $-4<\nu<-2$, then
\begin{equation}
\label{thm2:main2}
h\big(m(t)+x,t\big) =  x e^{-x}\left[
\alpha\left(
1-r(t)-\frac{3\sqrt\pi}{\sqrt t}\right)
-b {t^{1+\frac\nu2}}
+ o \left({t^{1+\frac\nu2}}\right)
+\mathcal O \left(\frac1{t^{\frac12+\eta}}\right)
+\mathcal O\big(r(t)^2\big)
\right]
\end{equation}
with
\begin{equation}
b=-\frac{A}{\sqrt{4\pi}}e^{-a}2^{\nu+1}
\Gamma\Big(\frac\nu2+1\Big) >0.
\end{equation}
\end{thm}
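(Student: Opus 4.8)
\medskip
\noindent\emph{Proof strategy.} The plan is to sharpen quantitatively the estimates behind Theorem~\ref{thm1}(d). We start from the probabilistic representation used throughout the paper: $h(x,t)$ equals $e^t$ times the expectation of $h_0$ at the endpoint of a Brownian motion started at $x$ and killed on meeting the moving boundary $s\mapsto m(t-s)$. Specialising to $h\big(m(t)+x,t\big)$ and applying the Girsanov change of measure recentring the motion around the speed-$2$ front, together with a time reversal, one is left with an expectation over a driftless Brownian motion, started at $x$ and required to stay above the slowly rising curve
\[
s\longmapsto \frac32\log\frac{t+1}{t-s+1}+r(t-s)-r(t)
\]
(which starts at $0$), reweighted by an exponential of the endpoint and by $h_0$ evaluated there. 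The killing at this barrier together with the exponential reweighting is exactly the mechanism producing the shape $\omega(x)\propto xe^{-x}$ of Theorem~\ref{thm1}(d) --- the factor $x$ being the Bessel-$3$/ballot weight for starting at height $x$ above the barrier and the factor $e^{-x}$ the Girsanov tilt --- so the analysis is effectively that of a Brownian motion conditioned to stay above a curve, i.e.\ a Bessel$(3)$-bridge functional. In particular the objects $\Delta$ and $\psi_\infty$ introduced in the proof of Theorem~\ref{thm1}(d) are at our disposal.

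The second step is the standard decomposition of $[0,t]$, isolating an initial window $[0,T]$ and the behaviour near the terminal time $t$, with $T\to\infty$ taken after $t\to\infty$. The initial window is where the initial condition enters, and it reproduces the factor $\int_0^\infty h_0(y)\,ye^y\psi_\infty(y)\,\diffd y$, hence the constant $\alpha$ of \eqref{thm1d}. The evaluation of the exponential weight at the pinned terminal point, which has been lowered by $r(t)$ relative to the $r\equiv0$ reference, contributes the factor $e^{-r(t)}=1-r(t)+\mathcal O\big(r(t)^2\big)$, accounting for the $-r(t)$ and $\mathcal O\big(r(t)^2\big)$ terms. The hypothesis $r''(t)=\mathcal O(t^{-2-\eta})$ allows the true barrier to be replaced by the reference curve $\frac32\log\frac{t+1}{t-s+1}$ up to an error integrating to the stated $\mathcal O\big(t^{-1/2-\eta}\big)$, and the $\mathcal O\big((\log t)/t\big)$ collects the routine second-order corrections in the local-CLT and reflection estimates.

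The heart of the matter is the terminal window with $r\equiv0$: one must compute, to first order and relative to a flat barrier, the cost of the process having to stay above and to end just above the steeply rising part of the Bramson curve $\frac32\log\frac{t+1}{t-s+1}$. This cost is a multiplicative factor $1-\frac{3\sqrt\pi}{\sqrt t}+o\big(1/\sqrt t\big)$: after rescaling the terminal window the correction becomes a Gaussian integral in which $\sqrt\pi=\Gamma(\tfrac12)$ appears, while the prefactor $3=2\times\frac32$ carries the coefficient $\frac32$ of the logarithm in $m$. I expect this to be the main obstacle: making the barrier estimate uniform in $t$ and sharp to order $1/\sqrt t$ needs the explicit reflection formula for a Brownian (or Bessel) bridge staying above a barrier, comparison of the curved barrier with piecewise-linear barriers on a suitable grid of times, and Gaussian-tail estimates to discard paths that stray far from the barrier. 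Assembling the windows, letting $t\to\infty$ then $T\to\infty$, and collecting the errors gives \eqref{thm2:main}.

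Finally, \eqref{thm2:main2} is obtained by keeping track of the far tail of $h_0$. The terminal point of the process is pinned near $\frac32\log t$ through a Gaussian window of width of order $\sqrt t$, so the integral against $h_0$ effectively carries an extra factor $e^{-y^2/(4t)}$; subtracting the $t\to\infty$ value, the discrepancy is governed, once $h_0(y)\sim Ay^\nu e^{-y}$ is used, by $\int_0^\infty y^{\nu+1}\big(e^{-y^2/(4t)}-1\big)\diffd y$. The substitution $y=\sqrt{4t}\,u$ turns this into $(4t)^{1+\nu/2}\int_0^\infty u^{\nu+1}\big(e^{-u^2}-1\big)\diffd u$, where the last integral converges at $0$ precisely because $\nu>-4$ and at $\infty$ precisely because $\nu<-2$ --- the latter being exactly the condition that makes $\alpha$ finite in Theorem~\ref{thm1}(d) --- and equals $\tfrac12\Gamma(1+\nu/2)$, the analytic continuation of $\int_0^\infty u^{\nu+1}e^{-u^2}\diffd u$, which is negative on $(-4,-2)$. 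Carrying the constants through the terminal-window analysis then produces exactly the term $-b\,t^{1+\nu/2}$ with $b$ as stated (positive since $\Gamma(1+\nu/2)<0$); as a consistency check, $b$ is minus the value that the case-(b) expression for $\alpha$ in \eqref{thm1b} assigns when continued to $\nu\in(-4,-2)$, and since $t^{1+\nu/2}\gg 1/\sqrt t$ for $\nu\in(-3,-2)$ this is precisely what forces the Ebert--van Saarloos correction to be modified in that range.
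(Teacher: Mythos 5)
Your outline reproduces the paper's high-level architecture in several places: the Girsanov/Bessel representation behind the shape $xe^{-x}$, the fact that the initial window reproduces $\alpha$ from \eqref{thm1d}, the factor $e^{-r(t)}=1-r(t)+\mathcal O\big(r(t)^2\big)$ coming from the lowered boundary, and your derivation of the $-b\,t^{1+\nu/2}$ term is essentially the paper's estimate of $H_4-H_3(t)$ (substitution $y=u\sqrt t$, dominated convergence, and $\int_0^\infty u^{\nu+1}\big(1-e^{-u^2/4}\big)\diffd u=-2^{\nu+1}\Gamma(1+\tfrac\nu2)$), including the correct sign discussion and your consistency check that $b$ is minus the continuation of the case-(b) constant.

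However, there is a genuine gap at the central point: the derivation of the factor $1-\frac{3\sqrt\pi}{\sqrt t}$ with controlled, uniform-in-$y$ errors is not carried out, and you say yourself that the sharp, uniform order-$1/\sqrt t$ barrier estimate is ``the main obstacle''; the numerological reading ($3=2\times\tfrac32$, $\sqrt\pi=\Gamma(\tfrac12)$) is not a derivation. The paper does not follow your reflection/piecewise-linear-barrier route at all. Instead it works with $\psi_t(y)=\E\big[e^{I_t(y)}\big]$, $I_t(y)=\frac12\int_0^t\diffd s\,m''(s)\big(\cbes ty0_s-y\frac{t-s}t\big)$, expands $e^{I_t(y)}=e^{I(y)-(I(y)-I_t(y))}$ to first order (Lemmas \ref{lemIIC} and \ref{lemdl}), and then needs two nontrivial inputs: (i) the asymptotic decorrelation $\E\big[e^{I(y)}(I(y)-I_t(y))\big]=\E\big[e^{I(y)}\big]\E\big[I(y)-I_t(y)\big]+\mathcal O\big(\tfrac{\log t}t\big)+y\,\mathcal O\big(\tfrac1t\big)$ (Proposition \ref{wlem} and Lemma \ref{maindeco}), which is the technical heart of the paper, and (ii) the explicit computation $\E\big[I(y)-I_t(y)\big]=\frac{3\sqrt\pi}{\sqrt t}+\dots$ from $\E[\bes 0_s]=\frac4{\sqrt\pi}\sqrt s$ (Proposition \ref{finer2}), with the stated $\mathcal O(t^{-1/2-\eta})$ coming from the $r''$ part there, not merely from ``replacing the barrier''. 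Your plan contains no substitute for the decorrelation step, and the uniformity in the starting point $y$ (with explicit $y$-dependence of the errors, later integrated against $h_0(y)ye^y$ via Lemma \ref{lem:calculus}) is precisely what a terminal-window heuristic that ignores the starting point does not give. As written, the part you defer is exactly the content of \eqref{thm2:main}, so this is a plausible programme rather than a proof.
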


This result allows us to bound the rate of convergence
$h\big(m(t)+x,t\big)$ to $\alpha x e^{-x}$: it is generically of order
$\max\big(1/\sqrt t,|r(t)|,t^{1+\nu/2}\big)$.

This also suggests that for $m(t)$ defined as in either choice of
\eqref{possibmt}, one should have $r(t)\sim -3\sqrt \pi/\sqrt t$ for
$\nu<-3$ and $r(t)\asymp t^{1+\nu/2}$ for $-3\le\nu<-2$. Note however that
we are not sure that such a $m(t)$ exists and, if it exists, we do not know
whether it satisfies the hypothesis on $m''(t)$ that we used in the
Theorem.

In the following two corollaries we highlight the best rates of convergence
of $h\big(m(t)+x,t\big)\to x e^{-x}$ that we can obtain from
Theorem~\ref{thm2}. For simplicity, we dropped the technical requirement
that $m(0)=0$ in the corollaries; the expression for $\alpha$ must
therefore be adapted.
\begin{corr}
Suppose that $h_0$ is a bounded function such that $h_0(x)=\mathcal
O\big(x^\nu e^{-x}\big)$ for large~$x$ with $\nu<-3$ and such that $\alpha$ 
is non-zero.
If we choose 
\begin{equation}
\label{magicr}
m(t)= 2t - \frac32 \log (t+1)  +a + \frac{c}{\sqrt{t+1}},
\end{equation}
then
\begin{align}
&\text{if $\nu\le-4$},&&
c = -3\sqrt\pi
\quad\Longleftrightarrow\quad
h\big(m(t)+x,t\big) = \alpha x e^{-x}+ \mathcal O\left(\frac{\log t}t\right),\\
&\text{if $-4<\nu<-3$},&&
c = -3\sqrt\pi
\quad\Longleftrightarrow\quad
h\big(m(t)+x,t\big) = \alpha x e^{-x}+ \mathcal O\left(t^{1+\frac\nu2}\right).
\label{imp2}
\end{align}
\end{corr}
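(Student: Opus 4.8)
The plan is to derive the Corollary directly from Theorem~\ref{thm2} by plugging in the explicit choice $r(t) = c/\sqrt{t+1}$ from \eqref{magicr} and tracking which error terms survive. First I would check that this $r$ satisfies all the hypotheses of Theorem~\ref{thm2}: indeed $r(0) = c = -a$ (after the stated adaptation dropping $m(0)=0$, so I will not dwell on that normalisation), $r(t)\to 0$, and $r''(t) = \mathcal{O}(t^{-5/2})$, so one may take $\eta = 1/2$ in the theorem, which makes the term $\mathcal{O}(t^{-1/2-\eta}) = \mathcal{O}(t^{-1})$ — in fact dominated by the $\mathcal{O}((\log t)/t)$ term already present. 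Likewise $r(t)^2 = c^2/(t+1) = \mathcal{O}(1/t)$, again dominated by $\mathcal{O}((\log t)/t)$. So from \eqref{thm2:main} we get
\[
h\big(m(t)+x,t\big) = \alpha x e^{-x}\left[ 1 - \frac{c}{\sqrt{t+1}} - \frac{3\sqrt\pi}{\sqrt t} + \mathcal{O}\big(t^{1+\nu/2}\big) + \mathcal{O}\Big(\frac{\log t}{t}\Big)\right].
\]

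The next step is the elementary observation that $\tfrac1{\sqrt{t+1}} = \tfrac1{\sqrt t} + \mathcal{O}(t^{-3/2})$, so $-c/\sqrt{t+1} - 3\sqrt\pi/\sqrt t = -(c + 3\sqrt\pi)/\sqrt t + \mathcal{O}(t^{-3/2})$, and the $\mathcal{O}(t^{-3/2})$ is absorbed into $\mathcal{O}((\log t)/t)$. Hence the bracket equals $1 - (c+3\sqrt\pi)/\sqrt t + \mathcal{O}(t^{1+\nu/2}) + \mathcal{O}((\log t)/t)$. This makes the role of the choice $c = -3\sqrt\pi$ transparent: it is precisely the value that kills the $1/\sqrt t$ term, leaving
\[
h\big(m(t)+x,t\big) = \alpha x e^{-x} + \mathcal{O}\big(t^{1+\nu/2}\big) + \mathcal{O}\Big(\frac{\log t}{t}\Big).
\]
For $\nu \le -4$ one has $1 + \nu/2 \le -1$, so $t^{1+\nu/2} = \mathcal{O}(1/t) = \mathcal{O}((\log t)/t)$ and the error is $\mathcal{O}((\log t)/t)$, giving the first displayed equivalence. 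For $-4 < \nu < -3$ one has $-1 < 1+\nu/2 < -1/2$, so $t^{1+\nu/2}$ dominates $(\log t)/t$ and the error is $\mathcal{O}(t^{1+\nu/2})$, giving \eqref{imp2}. For the converse directions of the ``$\Longleftrightarrow$'', I would argue that if $c \ne -3\sqrt\pi$ then the bracket contains a genuine term $-(c+3\sqrt\pi)/\sqrt t$ of exact order $t^{-1/2}$, which is strictly larger than both $\mathcal{O}((\log t)/t)$ and $\mathcal{O}(t^{1+\nu/2})$ (since $\nu < -3$ forces $1+\nu/2 < -1/2$); since all the other error terms are $o(t^{-1/2})$, the convergence rate would then be exactly $t^{-1/2}$, contradicting the stated $\mathcal{O}((\log t)/t)$ or $\mathcal{O}(t^{1+\nu/2})$ bound.

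The only mild subtlety — and the step I would be most careful about — is the converse implication, because it requires knowing that the $1/\sqrt t$ coefficient cannot be cancelled by a coincidence among the other error terms. This is why the regime $\nu < -3$ is imposed: it guarantees $t^{1+\nu/2} = o(1/\sqrt t)$, so no hidden cancellation is possible and the $t^{-1/2}$ term is genuinely the leading error unless $c = -3\sqrt\pi$. In the case $\nu < -3$ but without the stronger assumption $h_0(x)\sim A x^\nu e^{-x}$, the $\mathcal{O}(t^{1+\nu/2})$ term is only an upper bound and could in principle be smaller, but this only helps — it never interferes with the $1/\sqrt t$ term — so the argument is unaffected. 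No further input beyond Theorem~\ref{thm2} and these elementary asymptotic manipulations is needed.
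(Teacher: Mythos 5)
Your proposal is correct and follows essentially the same route as the paper, which presents the corollary as an immediate consequence of Theorem~\ref{thm2}: substitute $r(t)=c/\sqrt{t+1}$ (so $\eta=1/2$, $r(t)^2=\mathcal O(1/t)$), absorb the small errors, note $1/\sqrt{t+1}=1/\sqrt t+\mathcal O(t^{-3/2})$, and observe that $c=-3\sqrt\pi$ is exactly the choice killing the $t^{-1/2}$ term, while for the converse the surviving $-(c+3\sqrt\pi)/\sqrt t$ term dominates all remaining errors since $\nu<-3$. Your handling of the dropped normalisation $m(0)=0$ and of the possible non-sharpness of the $\mathcal O(t^{1+\nu/2})$ bound matches the paper's intent.
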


\noindent
Note in particular that we have recovered the result of \cite{Henderson2014},
but with more general initial conditions (\cite{Henderson2014} only considered
compactly supported initial conditions).

\begin{corr}
Suppose that $h_0(x)$ is a bounded function such that
$h_0(x)\sim Ax^\nu e^{-x}$ for large~$x$ with $-4<\nu<-2$, with $m$, $r$ and
$b$ as in Theorem~\ref{thm2}. Then
\begin{align*}
&\text{if }-3<\nu<-2,&r(t)&=-\frac b\alpha t^{1+\frac\nu2}+
o \big(t^{1+\frac\nu2} \big)
&\Longleftrightarrow\ &h\big(m(t)+x,t\big) = \alpha x e^{-x}+
o \big(t^{1+\frac\nu2} \big),\\[2ex]
&\text{if }-4<\nu\leq-3,&r(t)&=-\frac{3\sqrt\pi}{\sqrt t} -\frac b\alpha t^{1+\frac\nu2}+ o\big(t^{1+\frac\nu2} \big)
&\Longleftrightarrow\ & h\big(m(t)+x,t\big) = \alpha x e^{-x}+ o\big(t^{1+\frac\nu2}\big).
\end{align*}
\end{corr}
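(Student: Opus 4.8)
The plan is to read the corollary off equation~\eqref{thm2:main2} of Theorem~\ref{thm2} by a rearrangement; no new estimate is needed, only an accounting of the orders of magnitude of the error terms in the range $-4<\nu<-2$. First I would rewrite \eqref{thm2:main2} (using $\alpha\ne0$) as
\[
h\big(m(t)+x,t\big)-\alpha xe^{-x}
= xe^{-x}\Big[-\alpha\Big(r(t)+\frac{3\sqrt\pi}{\sqrt t}+\frac{b}{\alpha}\,t^{1+\frac\nu2}\Big)
+o\big(t^{1+\frac\nu2}\big)+\mathcal O\big(t^{-\frac12-\eta}\big)+\mathcal O\big(r(t)^2\big)\Big].
\]
Then I would observe that the two displayed cases of the corollary are in fact a single statement: since $3\sqrt\pi/\sqrt t=o(t^{1+\nu/2})$ exactly when $\nu>-3$, both the prescribed form of $r$ for $\nu\in(-3,-2)$ and the one for $\nu\in(-4,-3]$ are equivalent to the condition
\[
r(t)+\frac{3\sqrt\pi}{\sqrt t}+\frac{b}{\alpha}\,t^{1+\frac\nu2}=o\big(t^{1+\frac\nu2}\big).
\]

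For the ``$\Longrightarrow$'' direction I would substitute this condition into the first display. The term $-\alpha(\,\cdots)$ is then $o(t^{1+\nu/2})$ by assumption; moreover $|r(t)|=\mathcal O\big(t^{\max(-1/2,\,1+\nu/2)}\big)$, hence $r(t)^2=\mathcal O\big(t^{\max(-1,\,2+\nu)}\big)=o(t^{1+\nu/2})$ because $-1<1+\nu/2$ (as $\nu>-4$) and $2+\nu<1+\nu/2$ (as $\nu<-2$); and $\mathcal O(t^{-1/2-\eta})=o(t^{1+\nu/2})$ (automatic for $\nu>-3$, and holding for $\nu\le-3$ since then $\eta$ may be taken equal to $\tfrac12$ for the relevant $r$). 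Thus the whole bracket is $o(t^{1+\nu/2})$, and multiplying by the bounded factor $xe^{-x}$ yields $h(m(t)+x,t)=\alpha xe^{-x}+o(t^{1+\nu/2})$ for every $x\ge0$.

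For ``$\Longleftarrow$'' I would fix any $x>0$ (so $xe^{-x}\ne0$), divide the first display by $xe^{-x}$, use the hypothesis that the left-hand side is $o(t^{1+\nu/2})$, and rearrange to
\[
\alpha\Big(r(t)+\frac{3\sqrt\pi}{\sqrt t}+\frac{b}{\alpha}\,t^{1+\frac\nu2}\Big)
=\mathcal O\big(r(t)^2\big)+o\big(t^{1+\frac\nu2}\big).
\]
The one step needing a little care — and the closest thing to an obstacle, this being only a corollary of Theorem~\ref{thm2} — is disposing of $\mathcal O(r(t)^2)$ without circularity: since $r(t)\to0$ we have $|r(t)^2|\le|r(t)|$ for large $t$, so the identity first forces $|r(t)|=\mathcal O\big(t^{\max(-1/2,\,1+\nu/2)}\big)$; feeding this back (exactly as in the forward direction) gives $\mathcal O(r(t)^2)=o(t^{1+\nu/2})$, and the identity then collapses to $r(t)+3\sqrt\pi/\sqrt t+(b/\alpha)t^{1+\nu/2}=o(t^{1+\nu/2})$, which is precisely the stated asymptotics of $r$ in both cases. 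Apart from this quadratic-term bootstrap, the whole argument is just bookkeeping of which of $r(t)$, $1/\sqrt t$ and $t^{1+\nu/2}$ dominates in the regimes $\nu>-3$ and $\nu\le-3$.
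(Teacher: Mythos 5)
Your route is the intended one: the paper offers no separate argument for this corollary, which is meant to be read off \eqref{thm2:main2}, and your unified reformulation $r(t)+3\sqrt\pi/\sqrt t+(b/\alpha)t^{1+\nu/2}=o\big(t^{1+\nu/2}\big)$, the bound $r(t)^2=\mathcal O\big(t^{\max(-1,2+\nu)}\big)=o\big(t^{1+\nu/2}\big)$ valid precisely because $-4<\nu<-2$, and the bootstrap removing $\mathcal O(r(t)^2)$ in the reverse direction are all correct bookkeeping.

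The one step that does not hold as written is your treatment of the error $\mathcal O\big(t^{-1/2-\eta}\big)$ when $-4<\nu\le-3$: you claim it is $o\big(t^{1+\nu/2}\big)$ ``since $\eta$ may be taken equal to $\tfrac12$ for the relevant $r$''. The exponent $\eta$ is fixed by the hypothesis on the \emph{second derivative}, $r''(t)=\mathcal O(t^{-2-\eta})$, and the prescribed asymptotics of $r$ itself do not upgrade it: for instance, with $\nu$ close to $-4$, the choice $r(t)=-3\sqrt\pi/\sqrt t-(b/\alpha)t^{1+\nu/2}+t^{-1}\sin\big(t^{2/5}\big)$ satisfies the left-hand condition of the corollary but only $r''(t)=\mathcal O\big(t^{-2-1/5}\big)$, and then $t^{-1/2-\eta}\gg t^{1+\nu/2}$, so \eqref{thm2:main2} alone does not yield the right-hand side; the same term is silently dropped from your ``$\Longleftarrow$'' display, where no asymptotic form of $r$ is yet available to appeal to. What the argument actually needs is $\eta>-\tfrac{3+\nu}{2}$ (automatic for $\nu\ge-3$, and true for natural choices such as $r(t)=c/\sqrt{t+1}$, where $\eta=1/2$ works); this restriction is really a caveat inherited from the corollary's statement relative to Theorem~\ref{thm2}, but your justification should be replaced by that explicit condition on $\eta$ rather than the incorrect assertion that the asymptotics of $r$ allow $\eta=\tfrac12$.
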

Notice that for $h_0(x)\sim A x^{-3}e^{-x}$ the position $m(t)$ still
features a first order correction in $1/\sqrt t$ but with a coefficient
$-\big(3\sqrt\pi+\frac1{4\alpha}Ae^{-a}\big)$ which is different from the $\nu<-3$
case.

\section{Writing the solution as an expectation of a Bessel}
%%%%%%%%%%%%%%%%%%%%%%%%%%%%%%%%%%%%%%%%%%%%%%%%%%%%%%%%%%%%

In this section, we write the solution to~\eqref{eqboundary} as an
expectation of a Bessel process.

We only consider functions $m(t)$ that are twice continuously
differentiable. For each given $m(t)$, \eqref{eqboundary} is
a linear problem. We first study the fundamental solutions $\qop{t}{x}{y}$ 
defined as
\begin{equation}
\begin{cases}
\partial_t q =\partial_x^2 q + q&\text{if $x>m(t)$},\\
\qop{t}{m(t)}{y} =0,& \qop{0}{x}{y} =\delta(x-y);
\end{cases}
\end{equation}
where $\delta$ is the Dirac distribution. Then
\begin{equation}
h(x,t)=\int_0^\infty \diffd y\,\qop{t}{x}{y} h_0(y).
\label{hq}
\end{equation}

It is clear that $e^{-t}\qop{t}{x}{y}$ is the solution to the heat equation with
boundary, and therefore
\begin{equation}
\qop{t}{x}{y}\diffd x=e^t \P\Big(\bro y_t\in\diffd x,\, \bro y_s>m(s)\, \forall
s\in(0,t)\Big),
\label{q1}
\end{equation}
where $t\mapsto \bro{y}_t$  is  the Brownian motion started from
$\bro y_0=y$ with the normalization
\begin{equation}
\E\big[(\bro y_{s+h}-\bro y_s)^2\big]=2h.
\label{normalize}
\end{equation}

Suppose $f:[0,\infty)\to\R$ is a continuous function, and $A_t(f)$ is a measurable functional that depends only on $f(s),\,s\in[0,t]$. Then by Girsanov's theorem,
\begin{equation}
\E\big[A_t(\bro y)\big]=e^{-\frac14\int_0^t\diffd s\, m'(s)^2}
\E\Big[A_t( m+\bro y)\,e^{-\frac12\int_0^t  m'(s)\,\diffd\bro y_s}\Big].
\end{equation}
Plugging into \eqref{q1} at position $m(t)+x$ instead of $x$, we get
\begin{equation}
\qop{t}{m(t)+x}{y}\diffd x = e^{t-\frac14\int_0^t\diffd s\,m'(s)^2}
\E\Big[\indic{\bro y_t\in\diffd x}\indic{\bro y_s>0\, \forall s\in(0,t)}e^{-\frac12\int_0^t m'(s)\,\diffd\bro y_s}\Big].
\label{q2}
\end{equation}
We recall that, by the reflection principle, the probability that
a Brownian path started from $y$ stays positive and ends in $\diffd x$ is:
\begin{equation}
\P\big(\bro y _t\in\diffd x,\,\, \bro y _s>0\, \forall s\in(0,t)\big) =\frac1{\sqrt{\pi t}}{\sinh\big(\frac{xy}{2t}\big)}e^{-\frac{x^2+y^2}{4t}}\diffd x.
\label{positive}
\end{equation}
Using \eqref{positive}, we write \eqref{q2} as a conditional expectation:
\begin{equation}
\qop{t}{m(t)+x}{y}=\frac{\sinh\big(\frac{xy}{2t}\big)}{\sqrt{\pi t}} 
e^{-\frac{x^2+y^2}{4t}+t-\frac14\int_0^t\diffd s\,m'(s)^2}
\E\Big[e^{-\frac12\int_0^t m'(s)\,\diffd\cbes tyx_s}\Big],
\label{q3}
\end{equation}
where $\cbes tyx_s,\, s\in[0,t]$ is a Brownian motion (normalized as
in~\eqref{normalize}) started
from $y$ and conditioned not to hit zero for any $s\in(0,t)$ and to be at
$x$ at time $t$. Such a process is called a Bessel-3 bridge, and
we recall some properties of Bessel processes and bridges in
Section~\ref{sec:toolbox}.

It is convenient to think of the path $s\mapsto\cbes tyx _s$ as the
straight line $s\mapsto y+(x-y)s/t$ plus some fluctuations. This leads us
to define
\begin{equation}
\begin{aligned}
\psi_t(y,x)&:=\E\Big[e^{-\frac12\int_0^t
m'(s)\,\big(\diffd\cbes tyx_s-\frac{x-y}t\diffd s\big)}\Big]
=\E\Big[e^{-\frac12\int_0^t m'(s)\,\diffd\cbes tyx_s}\Big]
e^{\frac{m(t)}{2t}(x-y)}
,
\\&\hphantom{:}=
\E\Big[e^{\frac12\int_0^t
m''(s)\,\big(\cbes tyx_s-(y+\frac{x-y}ts)\big)\diffd s)}\Big],
\end{aligned}
\label{defpsi}
\end{equation}
where we have used integration by parts. With this quantity, \eqref{q3} now reads
\begin{equation}
\qop{t}{m(t)+x}{y}=\frac{\sinh\big(\frac{xy}{2t}\big)}{\sqrt{\pi t}} 
e^{\frac{m(t)}{2t}(y-x)-\frac{x^2+y^2}{4t}+t-\frac14\int_0^t\diffd s\,m'(s)^2}
\psi_t(y,x),
\label{q4}
\end{equation}
and the main part of the present work is to estimate $\psi_t(y,x)$.

\section{The Bessel toolbox}
\label{sec:toolbox}
%%%%%%%%%%%%%%%%%%%%%%%%

Before we begin our main task, we need some fairly standard estimates on
Bessel-3 processes and Bessel-3 bridges. From here on, we refer to these
simply as Bessel processes and Bessel bridges; the ``3'' will be implicit.
We include proofs for completeness.

We build most of our processes on the same probability space. We fix a driving
Brownian motion $(B_s,s\geq0)$ started from $0$ under a probability measure
$\P$, with the normalization $\E[B_t^2]=2t$.

For each $y\ge0$ we introduce a Bessel process $\bes y$ started from $y$ as the strong
solution to the SDE
\begin{equation}\label{sde bessel}
\bes y_0 = y, \qquad
\diffd \bes  y  _s = \diffd B_s + \frac 2 {\bes
y _s}\diffd s.
\end{equation}
It is well-known that $\bes y_s$ has the law of a Brownian motion conditioned to never hit zero.

We also introduce,  for each $t\ge0$ and $y\ge0$
\begin{equation}
\cbes t y 0_s
 = \frac{t-s}t\bes y_{\frac{st}{t-s}}\quad\text{for $s\in[0,t)$}.
\label{timechange}
\end{equation}
This process is a Bessel bridge from $y$ to 0 in time $t$, which is
a Brownian motion started from $y$ and conditioned to hit 0 for
the first time at time $t$.
One can check by direct substitution that $\cbes t y 0 _s$ solves
\begin{equation}
\cbes ty0_0=y,\qquad
\diffd \cbes ty0_s = \diffd \tilde B_{t,s} +\left(\frac 2{\cbes ty0_s}-\frac
{\cbes ty0_s}{t-s}\right)\diffd s,
\label{introcbes0}
\end{equation}
where for each $t$, 
$(\tilde B_{t,s},\, s\in[0,t))$ is the strong solution to
\begin{equation}\label{Btsdef}
\tilde B_{t,0}=0,\qquad
\diffd \tilde B_{t,s} =\frac {t-s}t\diffd \Big(B_{\frac{ts}{t-s}}\Big),
\end{equation}
and is thus itself a Brownian motion.

One can compute directly the law of the Brownian motion conditioned to
hit zero for the first time at time $t$ using \eqref{positive} and check
that this law solves the forward Kolmogorov equation (or Fokker
Planck equation) associated with the SDE (or Langevin equation)
\eqref{introcbes0}.

Similarly, we construct the Bessel bridge from $y$ to $x$ in time $t$,
the Brownian motion conditioned not to hit zero for any $s\in(0,t)$ and to be at
$x$ at time $t$, through
\begin{equation}
\cbes tyx_0=y,\qquad
\diffd \cbes tyx_s = \diffd \tilde B_{t,s} +\bigg(\frac x{t-s}\coth\frac{x
\cbes tyx_s}{2(t-s)}-\frac {\cbes tyx_s}{t-s}\bigg)\diffd s.
\label{introcbes}
\end{equation}

The advantages of constructing all the processes from a single Brownian
path $s\mapsto B_s$ is that they can be compared directly, realization by
realization. In particular we use the following comparisons:
\begin{lem}\label{beslem}
For any $y\ge z\ge0$ and $s\ge0$,
\begin{equation}
\bes z_s \le \bes y_s\le \bes z _s+ y-z\quad \text{ and } \quad y+B_s\le\bes y_s.
\label{beslem1}
\end{equation}

Furthermore, for any $y\ge0$, $x\geq z\geq0$, $t\ge0$ and $s\in[0,t]$,
\begin{equation}
\cbes t00_s \le \cbes ty0_s\le \cbes t00 _s+y\frac {t-s}t
,
\qquad
\cbes tyz_s \le \cbes tyx_s\le \cbes tyz _s+\frac {(x-z)s}t
.
\label{beslem2}
\end{equation}
\end{lem}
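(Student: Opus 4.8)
The plan is to prove all four comparisons by exhibiting each pair of processes as solutions of SDEs driven by the \emph{same} Brownian motion and then invoking a pathwise comparison principle for one-dimensional diffusions. The key point is that all processes in Lemma~\ref{beslem} were deliberately constructed in the excerpt from a single driving Brownian path: the Bessel processes $\bes y$ and $\bes z$ both solve \eqref{sde bessel} with the same $B$, and the bridges $\cbes ty0$, $\cbes t00$, $\cbes tyx$, $\cbes tyz$ all solve \eqref{introcbes0}/\eqref{introcbes} with the same $\tilde B_{t,\cdot}$. So in each case we compare two solutions $U_s\le V_s$ of SDEs $\diffd U_s=\diffd W_s + b_1(s,U_s)\,\diffd s$, $\diffd V_s=\diffd W_s + b_2(s,V_s)\,\diffd s$ with a common $W$; when the drifts are ordered and one of them is Lipschitz (or at least locally Lipschitz and non-explosive) in the space variable, the ordering of initial conditions propagates. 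I would state this comparison principle as a small sublemma (or just cite the standard reference) and then check hypotheses in each of the four cases.

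Concretely, for the first inequality $\bes z_s\le\bes y_s$: both solve $\diffd\xi_s=\diffd B_s+(2/\xi_s)\,\diffd s$, the drift $x\mapsto 2/x$ is decreasing on $(0,\infty)$ and locally Lipschitz away from $0$ (and the processes never hit $0$), and $z\le y$, so comparison gives $\bes z_s\le\bes y_s$. For the upper bound $\bes y_s\le\bes z_s+y-z$: set $V_s:=\bes z_s+(y-z)$; since $\bes y_s\ge\bes z_s$ we have $2/\bes y_s\le 2/\bes z_s$, so $V$ satisfies $\diffd V_s=\diffd B_s+(2/\bes z_s)\,\diffd s\ge \diffd B_s+(2/V_s)\,\diffd s$ pathwise (because $2/\bes z_s\ge 2/(\bes z_s+(y-z))=2/V_s$), i.e.\ $V$ is a supersolution of the Bessel SDE started from $y=\bes y_0$, and comparison again yields $\bes y_s\le V_s$. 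For $y+B_s\le\bes y_s$: the process $y+B_s$ solves $\diffd(y+B_s)=\diffd B_s+0\cdot\diffd s$ while $\bes y$ has nonnegative drift $2/\bes y_s\ge0$ and the same initial value, so it dominates. The bridge inequalities \eqref{beslem2} are handled identically: for the first, $\cbes t00$ and $\cbes ty0$ solve \eqref{introcbes0} with the same $\tilde B_{t,\cdot}$ and drift $x\mapsto 2/x - x/(t-s)$, which is again decreasing in $x$, giving $\cbes t00_s\le\cbes ty0_s$; the upper bound comes from checking that $s\mapsto\cbes t00_s+y(t-s)/t$ is a supersolution, using that the extra term $-x/(t-s)$ contributes $-y/t\cdot(t-s)/(t-s)$... more precisely one verifies $\diffd\big(\cbes t00_s+y\tfrac{t-s}t\big)=\diffd\tilde B_{t,s}+\big(\tfrac2{\cbes t00_s}-\tfrac{\cbes t00_s}{t-s}-\tfrac yt\big)\diffd s$ and checks this drift dominates the drift of \eqref{introcbes0} evaluated at $\cbes t00_s+y(t-s)/t$, which reduces to the elementary inequality $2/a - a/(t-s) - y/t \ge 2/(a+y(t-s)/t) - (a+y(t-s)/t)/(t-s)$; the first term on each side is compared via monotonicity of $2/\cdot$ and the remaining terms cancel exactly. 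The $\coth$-drift bridge $\cbes tyx$ versus $\cbes tyz$ is the same argument, using that $x\mapsto \frac{x}{t-s}\coth\frac{x\xi}{2(t-s)}$ is increasing in the endpoint parameter (so larger endpoint gives larger drift, hence $\cbes tyz_s\le\cbes tyx_s$) and that $s\mapsto\cbes tyz_s+(x-z)s/t$ is a supersolution for the endpoint-$x$ equation.

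The main technical obstacle is the behaviour of the drifts near the singularity at $0$ (for $2/x$) and near $s=t$ (where $x/(t-s)$ and the $\coth$ term blow up), which is exactly where the classical comparison theorems for Lipschitz coefficients do not directly apply. I would deal with this by localization: fix $\epsilon>0$ and $\delta>0$ and run the comparison on the event/time-interval where all relevant processes stay in $[\epsilon,\infty)$ and $s\le t-\delta$, where the drifts are genuinely Lipschitz; since none of the Bessel-type processes hits $0$ before time $t$ and the bridges are continuous up to time $t$ (with the stated boundary values), letting $\epsilon\downarrow0$ and $\delta\downarrow0$ recovers the inequalities on $[0,t]$ by continuity. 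An alternative, cleaner route that avoids comparison-theorem technicalities altogether is to argue directly from the conditioned-Brownian-motion interpretations: $\bes y$ is Brownian motion from $y$ conditioned never to hit $0$, and one can couple $\bes z$ and $\bes y$ by reflecting the $\bes z$-path in the level of $\bes y$ until they meet, etc.; but since the SDEs are already in hand and driven by a common noise, the localized comparison-principle argument is the most economical and is what I would write up, presenting the near-$0$ and near-$t$ truncations explicitly since that is the only place any care is needed.
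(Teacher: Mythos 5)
Your overall strategy---couple all processes through the common driving noise and compare drifts pathwise, obtaining the upper bounds by showing that the shifted process dominates---is essentially the paper's proof. The paper phrases the upper bounds as differential inequalities for the (bounded-variation) difference process rather than as supersolution checks, which is the same computation, and for the first pair of inequalities in \eqref{beslem2} it avoids any new SDE argument altogether by transporting \eqref{beslem1} through the time change \eqref{timechange}; your direct verification for \eqref{introcbes0} is correct but longer. Also, the localization you propose to sidestep Lipschitz hypotheses is more machinery than needed: with a common noise the difference of the two solutions is absolutely continuous, and since in each case one of the two drifts is decreasing in the space variable, the ordering follows from an elementary first-crossing argument with no truncation (this is in effect what the paper does, citing the comparison result only once).

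The one genuine hole is the $\coth$-bridge upper bound. You assert that $s\mapsto\cbes{t}{y}{z}_s+(x-z)s/t$ is a supersolution of \eqref{introcbes} ``by the same argument'', but the mechanism of the zero-endpoint case (exact cancellation of the linear terms plus monotonicity of $2/\cdot$) does not carry over. Writing $\xi=\cbes{t}{y}{z}_s$ and $W=\xi+(x-z)s/t$, the linear terms combine into $(x-z)/(t-s)$ and the supersolution property reduces to
\begin{equation*}
z\coth\Big(\frac{z\xi}{2(t-s)}\Big)+(x-z)\;\ge\;x\coth\Big(\frac{xW}{2(t-s)}\Big),
\end{equation*}
and since $W\ge\xi$ this comes down to $x\coth\big(\frac{x\xi}{2(t-s)}\big)-z\coth\big(\frac{z\xi}{2(t-s)}\big)\le x-z$, i.e.\ the fact that $u\mapsto u(\coth u-1)$ is decreasing---precisely the elementary input the paper isolates before integrating its bound $\diffd\zeta_s\le\frac{x-z-\zeta_s}{t-s}\diffd s$. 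Monotonicity of the endpoint drift (which you do invoke) only yields the lower bound $\cbes{t}{y}{z}_s\le\cbes{t}{y}{x}_s$; the upper bound needs this extra fact, which your proposal never states, so the step is asserted rather than proved. It is true, so the fix is just to add this inequality (and, as in the paper, to treat $z=0$ separately, e.g.\ via $1\le u\coth u\le 1+u$), but as written the $\coth$ case is incomplete.
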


\begin{proof}
To prove \eqref{beslem1} we make three observations.
\begin{itemize}
\item[--] The processes $\bes y_s$ and $y +B_s$ both start from $y$ and 
\begin{equation}
\diffd \big(\bes y_s-(y +B_s)\big) = \frac {\diffd s}{\bes y_s} >0, \quad
s>0,
\end{equation}
so that $\bes y_s > y+B_s$ for all $s>0$ and  $y\ge 0$.
\item[--] $\bes y_s$ and $\bes z_s$ follow the same SDE \eqref{sde bessel}
and $\bes y_0\ge\bes z_0$, so the two processes must remain ordered at all
times (see for instance \cite{kunita1997}).
\item[--] We have 
\begin{equation}
\diffd (\bes y_s -\bes z_s) = \left(\frac1{\bes y_s }- \frac1{\bes
z_s}\right)\diffd s,
\end{equation}
and since $\bes y_s  \geq \bes z_s$ for all $s\ge 0$ we see that $\bes y_s
-\bes z_s$ is decreasing, yielding $\bes y_s - \bes z_s\leq y-z$ for all
$s\geq 0$.
\end{itemize}

The inequalities in the left part of \eqref{beslem2} are a direct
consequence of \eqref{beslem1} through the change of
time~\eqref{timechange}. We now focus on the inequalities in the right part
of \eqref{beslem2}. First we assume that $z>0$.

The fact that for $x\geq z$ we have $\cbes tyx_s \ge \cbes tyz_s$ follows from the fact that $x\coth(ax)\ge z\coth(az)$ for any $a>0$ and $x\ge z$.

For the other inequality,  the fact that $u(\coth u-1)$ is decreasing
yields that 
\begin{equation}\begin{aligned}
\diffd\cbes tyx_s&=\diffd \tilde B_{t,s} 
+\frac2{\cbes tyx_s}\times\frac{x\cbes
tyx_s}{2(t-s)}\bigg(\coth\frac{x\cbes tyx_s}{2(t-s)}-1\bigg)\diffd
s +\frac{x-\cbes tyx_s}{t-s}\diffd s
\\&\le
\diffd \tilde B_{t,s} 
+\frac2{\cbes tyx_s}\times\frac{z\cbes
tyz_s}{2(t-s)}\bigg(\coth\frac{z\cbes tyz_s}{2(t-s)}-1\bigg)\diffd
s +\frac{x-\cbes tyx_s}{t-s}\diffd s
\\&\le
\diffd \tilde B_{t,s} +\frac z
{t-s}\bigg(\coth\frac{z\cbes tyz_s}{2(t-s)}-1\bigg)\diffd
s +\frac{x-\cbes tyx_s}{t-s}\diffd s,
\end{aligned} \end{equation}
so that, writing $\zeta_s :=\cbes tyx_s - \cbes tyz_s\ge0$ for
the difference process,
\begin{equation}
\diffd \zeta_s 
\le \frac{x-z-\zeta_s}{t-s}\diffd s.
 \end{equation}
But the solution to $\frac{\diffd\phi_s}{\diffd s} =(x-z-\phi_s)/(t-s)$ and
$\phi_0=0$ is $\phi_s=(x-z)s/t$, implying that
$\zeta_s\le (x-z)s/t$, which concludes the proof for $z>0$. For the case
$z=0$ the proof is the same but uses the inequalities $1 \leq u\coth u \leq
1 + u$ for $u\geq0$.
\end{proof}

We note that, intuitively, as the length of a Bessel bridge tends to
infinity, on any compact time interval the bridge looks more and more like
a Bessel process. Similarly, as the start point of a Bessel process tends to
infinity, on any compact interval it looks more and more like a Brownian
motion relative to its start position. We make this precise in the lemma
below.

\begin{lem}\label{bessconv}
For all $s\ge 0$ and $y\ge 0$,
\begin{equation}\label{bessconv1}
\cbes t y 0_s \to \bes y_s \,\,\,\, \text{as $t\to\infty$}.
\end{equation}
For all $s\ge0$
\begin{equation}\label{bessconv2}
\bes y_s - y \to B_s \,\,\,\, \text{as $y\to\infty$}.
\end{equation}
For all $s\ge 0$ and any $y_t\to\infty$ as $t\to\infty$,
\begin{equation}\label{bessconv3}
\cbes t {y_t} 0_s - y_t\frac{t-s}{t} \to B_s \,\,\,\, \text{ as $t\to\infty$}.
\end{equation}
\end{lem}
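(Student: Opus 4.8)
The plan is to use the explicit pathwise constructions from Section~\ref{sec:toolbox}, so that each convergence is a statement about a fixed Brownian path rather than a weak limit. For \eqref{bessconv1}, recall from \eqref{timechange} that $\cbes t y 0_s = \frac{t-s}{t}\bes y_{st/(t-s)}$. As $t\to\infty$ with $s$ fixed, the prefactor $\frac{t-s}{t}\to 1$ and the time argument $\frac{st}{t-s}\to s$, so we want to conclude $\cbes t y 0_s\to\bes y_s$. The only subtlety is that the time argument is being moved, so I would invoke almost-sure continuity of $r\mapsto\bes y_r$ (which holds since $\bes y$ solves the SDE \eqref{sde bessel} and has continuous paths) together with the deterministic convergence of the prefactor. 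Concretely: $\big|\cbes t y 0_s-\bes y_s\big|\le \big|\tfrac{t-s}{t}-1\big|\,\bes y_{st/(t-s)} + \big|\bes y_{st/(t-s)}-\bes y_s\big|$, and both terms vanish as $t\to\infty$ by path continuity (the first because $\bes y_{st/(t-s)}$ stays bounded along the convergent sequence of times).

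For \eqref{bessconv2}, from the SDE \eqref{sde bessel} we have the pathwise identity $\bes y_s - y = B_s + \int_0^s \frac{2}{\bes y_u}\,\diffd u$, so it suffices to show that the drift integral $\int_0^s \frac{2}{\bes y_u}\,\diffd u\to 0$ as $y\to\infty$. By the first inequality in \eqref{beslem1}, $\bes y_u\ge \bes 0_u + y\ge y$ for all $u\ge 0$ (using $\bes 0_u\ge 0$), hence $0\le\int_0^s \frac{2}{\bes y_u}\,\diffd u\le \frac{2s}{y}\to 0$. This gives \eqref{bessconv2} directly, and in fact uniformly on compact time intervals.

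For \eqref{bessconv3}, the cleanest route is to combine the two previous ideas. Write, using \eqref{timechange},
\begin{equation}
\cbes t {y_t} 0_s - y_t\tfrac{t-s}{t}
= \tfrac{t-s}{t}\Big(\bes{y_t}_{st/(t-s)} - y_t\Big),
\end{equation}
so it remains to show $\bes{y_t}_{st/(t-s)} - y_t \to B_s$. Again using the SDE, $\bes{y_t}_{r} - y_t = B_{r} + \int_0^{r}\frac{2}{\bes{y_t}_u}\,\diffd u$ with $r = r(t):=st/(t-s)\to s$; by the lower bound $\bes{y_t}_u\ge y_t$ the drift integral is at most $2r(t)/y_t\to 0$, while $B_{r(t)}\to B_s$ by continuity of Brownian paths. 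Multiplying by $\tfrac{t-s}{t}\to 1$ finishes the proof.

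The main (very mild) obstacle is purely bookkeeping: in \eqref{bessconv1} and \eqref{bessconv3} one is evaluating a continuous process at a time that itself depends on the limiting parameter, so one must not confuse $\bes y_{st/(t-s)}\to\bes y_s$ (true, a.s., by path continuity) with any statement requiring uniformity in the starting point; the uniform lower bound $\bes y_u\ge y$ from Lemma~\ref{beslem} is exactly what removes this worry by controlling the drift term independently of the (moving) time argument. No result beyond Lemma~\ref{beslem} and the pathwise SDE representations is needed.
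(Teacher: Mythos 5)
Your proofs of \eqref{bessconv1} is fine and matches the paper's (time change plus path continuity), and your route to \eqref{bessconv3} via the identity $\cbes t {y_t} 0_s - y_t\frac{t-s}{t}=\frac{t-s}{t}\big(\bes{y_t}_{st/(t-s)}-y_t\big)$ is a legitimate and arguably more direct alternative to the paper's three-term decomposition \eqref{3parts}. However, there is a genuine error in the bound you use to kill the drift term in \eqref{bessconv2} and \eqref{bessconv3}: you claim ``by the first inequality in \eqref{beslem1}, $\bes y_u\ge \bes 0_u+y\ge y$''. Lemma~\ref{beslem} gives the \emph{opposite} inequality, $\bes 0_u\le\bes y_u\le \bes 0_u+y$, and the statement $\bes y_u\ge y$ for all $u$ is simply false: the correct lower bound is $\bes y_u\ge y+B_u$, and a Bessel process started from $y$ does dip below $y$ whenever the driving Brownian motion is negative. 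So as written, the estimate $\int_0^s \frac{2}{\bes y_u}\,\diffd u\le \frac{2s}{y}$ is not justified, and likewise $\int_0^{r(t)}\frac{2}{\bes{y_t}_u}\,\diffd u\le \frac{2r(t)}{y_t}$ in your argument for \eqref{bessconv3}.

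The gap is easy to repair, and the repair is exactly what the paper does: from $\bes y_u-y\ge B_u$ one gets $\inf_{u\in[0,s]}\bes y_u\ge y+\inf_{u\in[0,s]}B_u\to\infty$ as $y\to\infty$ (pathwise, since $\inf_{u\in[0,s]}B_u$ is a fixed finite random variable), hence $\int_0^s\frac{2}{\bes y_u}\,\diffd u\le \frac{2s}{\inf_{u\in[0,s]}\bes y_u}\to0$; this is precisely the paper's proof of \eqref{bessconv2}, so after the fix your argument there coincides with theirs. For \eqref{bessconv3}, note that $r(t)=st/(t-s)\to s$, so for $t$ large all times lie in a compact interval, say $[0,2s]$, and the same bound $\bes{y_t}_u\ge y_t+\inf_{[0,2s]}B\to\infty$ controls the drift uniformly in the moving time argument; combined with $B_{r(t)}\to B_s$ by continuity, your shorter route to \eqref{bessconv3} then goes through and avoids the paper's separate handling of the three terms in \eqref{3parts}.
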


\begin{proof}
For \eqref{bessconv1}, we simply recall \eqref{timechange} which defined 
\begin{equation}\cbes t y 0_s = \frac{t-s}t\bes y_{\frac{st}{t-s}}\quad\text{for $s\in[0,t)$},\end{equation}
so we are done by continuity of paths.

For \eqref{bessconv2}, recall from Lemma \ref{beslem} that $\bes y_s - y\geq B_s$. This both gives us the required lower bound, and tells us that for any $s\ge 0$, $\inf_{u\in[0,s]} \bes y_u \to\infty$ as $y\to\infty$. Thus
\begin{equation}\bes y_s - y = B_s + 2\int_0^s \frac{1}{\bes y_u}\diffd u \leq  B_s + \frac{2s}{\inf_{u\in [0,s]}\bes y_u} \to  B_s \text{ as } y\to\infty.\end{equation}

Finally, for \eqref{bessconv3}, we write
\begin{equation}\label{3parts}
\cbes t {y_t} 0_s - y_t\Big(\frac{t-s}{t}\Big) = \bigg[\bes {y_t}_s -
y_t\bigg]
-
\bigg[\frac{s}{t} (\bes{y_t}_s - y_t)\bigg] +
\bigg[
\Big(\frac{t-s}{t}\Big)(\bes{y_t}_{s+\frac{s^2}{t-s}} - \bes{y_t}_s)
\bigg].
\end{equation}
By \eqref{bessconv1}, $\bes{y_t}_s-y_t \to B_s$. By \eqref{beslem1}, $B_s \leq \bes{y_t}_s - y_t \leq \bes 0 _s$, so
\begin{equation}\frac{s}{t} (\bes{y_t}_s - y_t) \to 0 \quad\text{ as } t\to\infty.\end{equation}
Using our coupling between the Bessel processes and Brownian motion we have $\diffd B_u \le \diffd \bes {y_t}_u \le \diffd \bes 0_u$ for all $u\ge 0$ and hence
\begin{equation} B_{s+\frac{s^2}{t-s}} - B_s \leq \bes{y_t}_{s+\frac{s^2}{t-s}} - \bes{y_t}_s \leq \bes 0 _{s+\frac{s^2}{t-s}} - \bes 0_s\end{equation}
so by continuity of paths,
\begin{equation}\Big(\frac{t-s}{t}\Big)(\bes{y_t}_{s+\frac{s^2}{t-s}} -
\bes{y_t}_s) \to 0 \quad\text{ as } t\to\infty,
\end{equation}
which concludes the proof of \eqref{bessconv3}.
\end{proof}

We need the fact that the increments of a Bessel process over time $s$ 
are roughly of order $s^{1/2}$. By paying a small price on the exponent, we
obtain the following uniform bounds:
\begin{lem}\label{bessfluc}
For any $\epsilon>0$ small enough,
there exists a positive random variable $G$ with Gaussian tail such that
uniformly in $s\ge0$ and $y\ge0$,
\begin{equation}
\big| \bes y _{s} - y \big|
   \le G \max\Big(s^{\frac12-\epsilon},s^{\frac12+\epsilon}\Big)\quad \text{ and }
\quad
\big|B_s\big|      \le
G \max\Big(s^{\frac12-\epsilon},s^{\frac12+\epsilon}\Big).
\label{bessfluc:1}
\end{equation}
Furthermore, uniformly in $x\ge0$, $y\ge0$, $t\ge0$  and $0\le s\le t$,
\begin{equation}
\left| \cbes t y x_s-\Big(y+\frac{x-y}t s\Big)\right|\le
G \max\Big(s^{\frac12-\epsilon},s^{\frac12+\epsilon}\Big).
\label{bessfluc:2}
\end{equation}
\end{lem}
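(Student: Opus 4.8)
The plan is to establish \eqref{bessfluc:1} first and then deduce \eqref{bessfluc:2} from it using the pathwise comparisons of Lemma~\ref{beslem}. The guiding principle for \eqref{bessfluc:1} is that $G\max(s^{1/2-\epsilon},s^{1/2+\epsilon})$ is a H\"older-type control near $s=0$, where the maximum is $s^{1/2-\epsilon}$, and a law-of-iterated-logarithm-type control near $s=\infty$, where it is $s^{1/2+\epsilon}$, the two regimes being swapped by Brownian time inversion.

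First I would treat the Brownian bound. Put $G_{\mathrm{in}}:=\sup_{0<s\le 1}|B_s|\,s^{-(1/2-\epsilon)}$. Covering $(0,1]$ by the dyadic blocks $[2^{-n-1},2^{-n}]$ and bounding $|B_s|$ on the $n$-th block by $M_n:=\sup_{u\le 2^{-n}}|B_u|$ gives $G_{\mathrm{in}}\le\sup_{n\ge 0}2^{(n+1)(1/2-\epsilon)}M_n$; by Brownian scaling $M_n$ has the law of $2^{-n/2}M_0$, and by the reflection principle $M_0=\sup_{s\le1}|B_s|$ has a Gaussian tail. A union bound then yields $\P(G_{\mathrm{in}}>z)\le\sum_{n\ge0}\P\big(M_0>c\,z\,2^{n\epsilon}\big)\le C e^{-c'z^2}$, since the $n=0$ term dominates the exponents that grow like $4^{n\epsilon}$; thus $G_{\mathrm{in}}$ has a Gaussian tail. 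Applying the same argument to the time-inverted motion $\widehat B_u:=uB_{1/u}$ (again a Brownian motion) shows that $G_{\mathrm{out}}:=\sup_{s\ge1}|B_s|\,s^{-(1/2+\epsilon)}$ has the law of $G_{\mathrm{in}}$, because $|B_s|s^{-(1/2+\epsilon)}=|\widehat B_{1/s}|\,(1/s)^{-(1/2-\epsilon)}$. For the Bessel part, Lemma~\ref{beslem} gives $B_s\le\bes y_s-y\le\bes 0_s$, hence $|\bes y_s-y|\le\max(|B_s|,\bes 0_s)$ uniformly in $y$, so it suffices to control $\bes 0$. Using the standard fact that $\bes 0$ is distributed as the modulus of a three-dimensional Brownian motion $W$ (so $\bes 0_s\le\sum_{i=1}^3|W^i_s|$ has a Gaussian-tailed running maximum, and $\bes 0$ inherits Brownian scaling and time inversion), the same dyadic-plus-inversion argument produces a Gaussian-tailed random variable bounding $\bes 0_s$ by $\max(s^{1/2-\epsilon},s^{1/2+\epsilon})$. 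Taking $G$ to be a constant multiple of the maximum of the random variables produced above gives a single $G$ with Gaussian tail satisfying both inequalities of \eqref{bessfluc:1}.

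For \eqref{bessfluc:2}, write $\zeta_s:=\cbes tyx_s-\big(y+\tfrac{x-y}{t}s\big)$ for the deviation of the bridge from its chord. For the upper bound, chaining the right-hand inequalities of \eqref{beslem2} gives $\cbes tyx_s\le\cbes t00_s+y\tfrac{t-s}{t}+x\tfrac{s}{t}$, so $\zeta_s\le\cbes t00_s$; and by \eqref{timechange} and \eqref{bessfluc:1}, $\cbes t00_s=\tfrac{t-s}{t}\bes 0_{st/(t-s)}\le G\,\tfrac{t-s}{t}\max\big(u^{1/2-\epsilon},u^{1/2+\epsilon}\big)\big|_{u=st/(t-s)}\le G\max(s^{1/2-\epsilon},s^{1/2+\epsilon})$, the last step being an elementary inequality valid for all $0\le s\le t$. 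For the lower bound, a direct computation from \eqref{introcbes} shows $\zeta$ solves $\diffd\zeta_s=\diffd\tilde B_{t,s}+D_s\,\diffd s-\tfrac{\zeta_s}{t-s}\,\diffd s$ with $\zeta_0=0$, where $D_s=\tfrac{x}{t-s}\big(\coth\tfrac{x\cbes tyx_s}{2(t-s)}-1\big)\ge 0$ because $\coth\ge 1$ on $(0,\infty)$. Comparing with the Brownian bridge $\phi_s:=(t-s)\int_0^s\tfrac{\diffd\tilde B_{t,u}}{t-u}$, which by \eqref{Btsdef} equals $\tfrac{t-s}{t}B_{st/(t-s)}$ and solves the same equation with $D\equiv 0$, the difference $\delta_s:=\zeta_s-\phi_s$ satisfies $\diffd\big(\delta_s/(t-s)\big)=D_s\,\diffd s/(t-s)\ge 0$ with $\delta_0=0$, hence $\delta_s\ge 0$ and $\zeta_s\ge\phi_s\ge-G\max(s^{1/2-\epsilon},s^{1/2+\epsilon})$ by \eqref{bessfluc:1} applied to $B$ and the same elementary inequality. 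Combining the two bounds gives $|\zeta_s|\le G\max(s^{1/2-\epsilon},s^{1/2+\epsilon})$.

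The step I expect to be the main obstacle is the lower bound on the bridge deviation when the endpoint $x$ is large: the crude comparison $\cbes tyx_s\ge\cbes ty0_s$ loses a term of order $xs/t$ and is useless, so one must genuinely exploit that the drift of $\cbes tyx$ exceeds the Brownian-bridge drift towards $x$ by the nonnegative amount $D_s$ --- which is what $\coth\ge 1$ encodes --- in order to compare $\zeta$ with a Brownian bridge. A secondary, bookkeeping-type difficulty is arranging that one and the same $G$ (up to universal constants) controls $B$, $\bes 0$, all the $\bes y$, the time-changed motions $\tilde B_{t,\cdot}$, and hence the bridges; this is why the whole argument is organised so that every process in sight is built pathwise from the single Brownian motion $B$.
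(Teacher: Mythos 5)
Your proposal is correct and follows essentially the same route as the paper: reduce \eqref{bessfluc:1} to a sup bound on $\bes 0$ and $|B|$ via the pathwise inequalities of Lemma~\ref{beslem} (proved by a block decomposition, scaling and a union bound, exactly as in the paper's appendix, with your dyadic-blocks-plus-time-inversion and 3D-Brownian-modulus variants being cosmetic differences), then get the upper bound in \eqref{bessfluc:2} from $\zeta_s\le\cbes t00_s$ and the time change \eqref{timechange}, and the lower bound by comparing $\cbes tyx$ with the Brownian bridge driven by the same $\tilde B_{t,\cdot}$ using $\coth\ge 1$. Your explicit integrating-factor computation for $\delta_s/(t-s)$ is just a spelled-out version of the paper's coupling $\cbes tyx_s\ge\cbro tyx_s$, so the two proofs coincide in substance.
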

\begin{proof}
From \eqref{beslem1} we have $B_s\le\bes y_s-y\le\bes 0_s$. Also by
symmetry $\P(|B_s|>x)= 2\P(B_s>x)$. Thus to prove \eqref{bessfluc:1}, it
is
sufficient to show that 
\begin{equation}\P\left(\sup_{s>0}\frac{\bes 0_s}{\max(s^{1/2-\epsilon},s^{1/2+\epsilon})} > x \right) \leq c_1 e^{-c_2 x^2}\end{equation}
for some positive $c_1$ and $c_2$.
The proof is is elementary and we defer it to an appendix.

To prove~\eqref{bessfluc:2}, notice that from \eqref{beslem2} we have
\begin{equation}
\cbes t y x_s-\Big(y+\frac{x-y}t s\Big)\le \cbes t00_s.
\end{equation}
But from the change of time~\eqref{timechange} and \eqref{bessfluc:1},
\begin{equation}
\label{boundcbes}
\cbes t00_s=\frac{t-s}t\bes 0_{\frac{st}{t-s}}
\le
 G \frac{t-s}t\max\left\{\Big(\frac{st}{t-s}\Big)^{\frac12-\epsilon},\Big(
\frac{st}{t-s}\Big)^{\frac12+\epsilon}\right\}
\le 
G \max\Big(s^{\frac12-\epsilon},s^{\frac12+\epsilon}\Big),
\end{equation}
where the last step is obtained by pushing the $(t-s)/t$ inside the $\max$.
This provides the upper bound of \eqref{bessfluc:2}. For the lower bound,
we introduce Brownian bridges $s\mapsto \cbro tyx_s$ started from
$y$ and conditioned to be at $x$ at time $t$. We couple the Brownian
bridge to the Bessel bridges by building them over the family $\tilde
B_{t,s}$ of Brownian motions defined in \eqref{Btsdef}:
\begin{equation}
\cbro tyx_0=y,
\qquad
\diffd \cbro tyx_s = \diffd \tilde B_{t,s}
+\frac {x-\cbro tyx_s}{t-s}\diffd s.
\label{defcbro}
\end{equation}
One can check directly that
\begin{equation}\cbro tyx_s=y+\frac{x-y}{t}s+\cbro t00_s.\end{equation}
Furthermore,
by comparing~\eqref{defcbro} to \eqref{introcbes}, it is immediate from the fact that
$\coth u\geq 1$ for all $u\geq0$ that $\cbes tyx_s\ge\cbro tyx_s$. Therefore
\begin{equation}
\cbes t y x_s-\Big(y+\frac{x-y}t s\Big)\ge \cbro t00_s.
\end{equation}
Also, as in \eqref{timechange}, we can relate $B_s$ and $\cbro
t00_s$ through a time change:
\begin{equation}
\cbro t00_s=\frac{t-s}{t}B_{\frac{st}{t-s}}\quad\text{for $s\in[0,t)$},
\end{equation}
and, as in \eqref{boundcbes},
\begin{equation}
\big|\cbro t00_s\big|=\frac{t-s}t\Big|B_{\frac{st}{t-s}}\Big|
\le
 G \frac{t-s}t\max\left\{\Big(\frac{st}{t-s}\Big)^{\frac12-\epsilon},\Big(
\frac{st}{t-s}\Big)^{\frac12+\epsilon}\right\}
\le 
G \max\Big(s^{\frac12-\epsilon},s^{\frac12+\epsilon}\Big),
\end{equation}
which concludes the proof.
\end{proof}

\section{Simple properties of $\psi_t(y,x)$ and proof of Theorem~\ref{thm1}}
%%%%%%%%%%%%%%%%%%%%%%%%%%
As in the hypothesis of Theorem~\ref{thm1}, we assume throughout this
section that $m$ is twice continuously differentiable with
\begin{equation}
m(0)=0
\quad \text{ and } \quad m''(s)=\mathcal O\Big(\frac1{s^2}\Big).
\label{mprop1}
\end{equation}
The large~$s$ behaviour of $m''(s)$ implies that there exists a $v$ such
that, for large~$s$,
\begin{equation}
m'(s)=v+\mathcal O\Big(\frac1s\Big) \quad \text{ and } \quad
m(s) = v s + \mathcal O(\log s).
\label{mprop2}
\end{equation}
We define
\begin{equation}
\Delta=\frac14\int_0^\infty\diffd s\, (m'(s)-v)^2,
\label{defDelta}
\end{equation}
which is finite because of \eqref{mprop1}.

\subsection{Simple properties of $\psi_t(y,x)$}

We recall from \eqref{defpsi} that the main quantity we are interested in is
\begin{equation}
\psi_t(y,x)=\E\big[e^{I_t(y,x)} \big],
\label{psiI}
\end{equation}
with
\begin{equation}
\label{defIt}
I_t(y,x)=\frac12 \int_0^t\diffd s\, m''(s)\Big(\cbes tyx_s-\Big(y+\frac{x-y}ts\Big)\Big),
\end{equation}
where we recall that $\cbes t y x_s,\, s\in[0,t]$ is a Bessel bridge from
$y$ to $x$ over time $t$. We mainly need to consider $x=0$ so
we use the shorthand
\begin{equation}
\psi_t(y):=\psi_t(y,0).
\end{equation}
We also define
\begin{equation}\label{defI}
I(y) =\frac12  \int_0^\infty\diffd s\,m''(s)\big(\bes y_s-y\big)
\end{equation}
where $\bes y_s,\, s\geq0$ is a Bessel process started from $y$.

\begin{prop}\label{easybounds}
The function $\psi_t(y,x)$ has the following
properties:\begin{itemize}
\item It is bounded away from zero and infinity: there exist two positive
constants $0<K_1<K_2$ depending on the function $m''(s)$ such that for
any $x$, $y$, $t$,
\begin{equation}
K_1  \le \psi_t(y,x)\le K_2.
\end{equation}
\item It hardly depends on $x$ for large times: recalling that $\psi_t(y):=\psi_t(y,0)$,
\begin{equation}
\psi_t(y,x)=\psi_t(y)\Big(1+x\,\mathcal O\Big(\frac{\log
t}t\Big)\Big)\quad\text{uniformly in $y$ and $x$}.
\label{noxdep}
\end{equation}
\item For fixed $y$, it has a finite and positive limit as $t\to\infty$:
\begin{equation}
\psi_\infty(y):=
\lim_{t\to\infty}\psi_t(y)=\E\Big[e^{I(y)}\Big]>0.
\label{limpsi}
\end{equation}
\item The large time limit~$\psi_\infty(y)$ has a well-behaved large~$y$
limit: for any function $t\mapsto y_t$ that goes to infinity as $t\to\infty$,
\begin{equation}
\lim_{y\to\infty}\psi_\infty(y)=\lim_{t\to\infty}\psi_t(y_t)
= 
\E\Big[e^{\frac12\int_0^\infty\diffd s\,m''(s) B_s}\Big] =
e^{\Delta}.
\end{equation}
\end{itemize}
\end{prop}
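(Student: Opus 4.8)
The plan is to establish the four bullet points of Proposition~\ref{easybounds} in order, relying throughout on the uniform Bessel fluctuation bound \eqref{bessfluc:2} of Lemma~\ref{bessfluc} and on the convergence statements of Lemma~\ref{bessconv}. Write $C$ for a constant with $|m''(s)|\le C/(1+s)^2$ for all $s\ge0$, which exists by \eqref{mprop1}.

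\medskip

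\noindent\textbf{Boundedness.} Starting from $\psi_t(y,x)=\E[e^{I_t(y,x)}]$ with $I_t$ as in \eqref{defIt}, I would bound $|I_t(y,x)|$ pointwise on each Brownian path by $\frac12\int_0^t |m''(s)|\,|\cbes tyx_s-(y+\frac{x-y}ts)|\,\diffd s \le \frac12\int_0^\infty \frac{C}{(1+s)^2}\, G\max(s^{1/2-\epsilon},s^{1/2+\epsilon})\,\diffd s =: C' G$, using \eqref{bessfluc:2}; the integral converges since the integrand is $O(s^{1/2+\epsilon-2})$ at infinity and $O(s^{1/2-\epsilon})$ near zero. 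Hence $e^{-C'G}\le e^{I_t(y,x)}\le e^{C'G}$, and since $G$ has a Gaussian tail, $\E[e^{\pm C'G}]<\infty$; taking expectations gives $K_1:=1/\E[e^{C'G}]\le \psi_t(y,x)\le \E[e^{C'G}]=:K_2$ by Jensen for the lower bound (or just $e^{-C'G}\le e^{I_t}$ and Jensen). This is uniform in $x,y,t$ as claimed.

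\medskip

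\noindent\textbf{Weak $x$-dependence.} Using the right-hand comparison in \eqref{beslem2}, $\cbes tyx_s - \cbes ty0_s \in [0,xs/t]$, so $|I_t(y,x)-I_t(y,0)| \le \frac12\int_0^t |m''(s)|\, \frac{xs}{t}\,\diffd s \le \frac{x}{2t}\int_0^t \frac{Cs}{(1+s)^2}\,\diffd s = \frac{x}{2t}\,O(\log t) = x\,O(\tfrac{\log t}{t})$, uniformly in $y,x,t$. Writing $e^{I_t(y,x)} = e^{I_t(y,0)}e^{I_t(y,x)-I_t(y,0)}$ and using $|e^u-1|\le |u|e^{|u|}$ together with the uniform bound $e^{I_t(y,0)}\le e^{C'G}$ already established, one gets $|\psi_t(y,x)-\psi_t(y)| \le \E[e^{C'G}\cdot x\,O(\tfrac{\log t}{t})] = x\,O(\tfrac{\log t}{t})$; dividing by $\psi_t(y)\ge K_1$ yields \eqref{noxdep}.

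\medskip

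\noindent\textbf{Limit as $t\to\infty$ for fixed $y$.} By \eqref{bessconv1} of Lemma~\ref{bessconv}, $\cbes ty0_s \to \bes y_s$ for each $s$, and since $\cbes ty0_s - (y(t-s)/t) = \cbes ty0_s - y + ys/t$, the integrand $m''(s)(\cbes ty0_s - y(t-s)/t)$ converges pointwise in $s$ to $m''(s)(\bes y_s - y)$; note $\cbes ty0_s - y(t-s)/t = \cbes ty0_s - (y + \frac{0-y}{t}s)$, matching the form in $I_t(y,0)$. The integrand is dominated, uniformly in $t$, by $\frac{C}{(1+s)^2}G\max(s^{1/2-\epsilon},s^{1/2+\epsilon})$, which is integrable, so by dominated convergence $I_t(y,0)\to I(y)$ almost surely (here $I(y)$ is as in \eqref{defI}, noting $\bes y_s - y$ is the right limiting integrand since the linear drift term vanishes as $t\to\infty$). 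A second application of dominated convergence, using the uniform bound $e^{I_t(y,0)}\le e^{C'G}$ with $\E[e^{C'G}]<\infty$, gives $\psi_t(y)\to \E[e^{I(y)}] =: \psi_\infty(y)$, and positivity follows from $\psi_\infty(y)\ge K_1>0$.

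\medskip

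\noindent\textbf{Large-$y$ limit.} For the last bullet I would use \eqref{bessconv3}: if $y_t\to\infty$ then $\cbes t{y_t}0_s - y_t(t-s)/t \to B_s$ for each $s$, with the same integrable domination as above, so $I_t(y_t,0)\to \frac12\int_0^\infty m''(s)B_s\,\diffd s$ a.s., whence $\psi_t(y_t)\to \E[e^{\frac12\int_0^\infty m''(s)B_s\,\diffd s}]$ by dominated convergence. Applying this with the constant sequence argument—more precisely, taking $y_t = y$ fixed gives $\psi_\infty(y)$, and the claim $\lim_{y\to\infty}\psi_\infty(y)$ equals the same limit follows because $\psi_\infty(y)$ is monotone in $y$ (it is, via \eqref{beslem1}: $s\mapsto\bes y_s - y$ is monotone in $y$ in the sense that $\bes y_s - \bes z_s\in[0,y-z]$, though one must be careful that $m''$ can change sign—so instead I would simply invoke that for \emph{any} $y_t\to\infty$ the limit of $\psi_t(y_t)$ is the same fixed number, which forces $\lim_{y\to\infty}\psi_\infty(y)$ to exist and equal it). Finally, $\frac12\int_0^\infty m''(s)B_s\,\diffd s$ is a centered Gaussian; integrating by parts, $\frac12\int_0^\infty m''(s)B_s\,\diffd s = -\frac12\int_0^\infty (m'(s)-v)\,\diffd B_s$ (the boundary terms vanish since $m'(s)-v = O(1/s)$ and $B_s = o(s)$), which has variance $\frac14\int_0^\infty(m'(s)-v)^2\cdot 2\,\diffd s$; wait, with normalization $\E[B_t^2]=2t$ the Itô isometry gives variance $\frac14\cdot 2\int_0^\infty(m'(s)-v)^2\,\diffd s = \frac12\int_0^\infty(m'(s)-v)^2\,\diffd s$, hmm—I would recompute this carefully, the target being $\E[e^{\cdot}] = e^{\frac12\var} = e^{\Delta}$ with $\Delta = \frac14\int_0^\infty(m'(s)-v)^2\,\diffd s$, so the variance must be $2\Delta = \frac12\int_0^\infty(m'(s)-v)^2\,\diffd s$, which is consistent with the isometry above.

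\medskip

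\noindent\textbf{Main obstacle.} The substantive point is setting up the dominating function correctly so that one integrable random bound $\frac{C}{(1+s)^2}G\max(s^{1/2-\epsilon},s^{1/2+\epsilon})$ serves simultaneously for: pathwise boundedness, the passage $I_t\to I$ via pointwise-in-$s$ convergence (Lemma~\ref{bessconv}), and the outer expectation limit (Gaussian tail of $G$ making $e^{C'G}$ integrable). Everything else is bookkeeping; the only genuinely delicate computation is the final Gaussian variance identification, where one must track the $\E[B_t^2]=2t$ normalization through the integration by parts and Itô isometry to land exactly on $e^{\Delta}$.
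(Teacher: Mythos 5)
Your proposal is correct and follows essentially the same route as the paper: uniform control of $I_t(y,x)$ via Lemma~\ref{bessfluc}, the comparison \eqref{beslem2} for the $x$-dependence, and dominated convergence via Lemma~\ref{bessconv} for both limits, ending with the same Gaussian variance computation yielding $e^{\Delta}$. The only cosmetic differences are that the paper gets $\lim_{y\to\infty}\psi_\infty(y)$ directly from \eqref{bessconv2} by dominated convergence rather than by your diagonal-type deduction from the $y_t\to\infty$ statement, and it computes the variance by integrating by parts at finite $t$, writing $X_t=\frac12\int_0^t\big(m'(t)-m'(s)\big)\,\diffd B_s$ and letting $t\to\infty$, instead of integrating by parts on $[0,\infty)$, which spares the almost-sure boundary-term check you needed.
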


\begin{proof}
For the first result, Lemma~\ref{bessfluc} tells us that
\begin{equation}
\left| \cbes t y x_s-\Big(y+\frac{x-y}t s\Big)\right|\le G
\max\Big(s^{\frac12-\epsilon},s^{\frac12+\epsilon}\Big),
\label{unib1}
\end{equation}
where $G>0$ is a random variable with Gaussian tail independent of
$t$, $y$ and $x$. Then, since $m''(s) = \mathcal O(1/s^2)$,
\begin{equation}
\Big|I_t(y,x)\Big|
\le\frac12\int_0^\infty \diffd s\, \big|m''(s) \big| G \max\Big(s^{\frac12-\epsilon},s^{\frac12+\epsilon} \Big)
= G \mathcal O(1).
\label{unib2}
\end{equation}

For the second result, we compare paths going to $x$ with paths going to 0: we know from Lemma~\ref{beslem} that $0\le\cbes t y 0_s
-\cbes t y x_s
+ x  s/t\le xs/t$, so
\begin{equation}
\begin{aligned}
\big|I_t(y,0)-I_t(y,x)\big|
&\le\frac12\int_0^{t}\diffd s\,\big|m''(s)\big|\times\Big|\cbes t y 0_s
-\cbes t y x_s
+\frac x t s\Big|
\\&\le\frac x {2t} \int_0^{t}\diffd s\,\big|m''(s)\big|
s = x \mathcal O\Big(\frac{\log t}t\Big).
\end{aligned}
\end{equation}

We now turn to the third result.
For any fixed $s$ and $y$, Lemma \ref{bessconv} tells us that $\cbes t y 0_s\to \bes y_s$ as $t\to\infty$.
Thus, using \eqref{unib1} and \eqref{unib2}, we can apply dominated convergence and obtain
\begin{equation}
I_t(y,0) = \frac12\int_0^t\diffd s\, m''(s)\Big(\cbes ty0_s- y \frac{t-s}{t}\Big) \to \frac12\int_0^\infty \diffd s\, m''(s) \big(\bes y_s-y\big) = I(y).
\end{equation}
Furthermore, as the bound \eqref{unib2} is a random variable with Gaussian tails, using dominated convergence again we get
\begin{equation}
\lim_{t\to\infty}\E\big[e^{ I_t(y,0)}\big]
= \E\big[e^{ I(y)}\big].
\label{limitI}
\end{equation}

For the fourth statement, by Lemma \ref{bessconv} for any fixed $s$ we have
\begin{equation}
\lim_{y\to\infty}\big(\bes {y}_s-y\big)= B_s\quad\text{and}\quad
\lim_{t\to\infty}\Big(\cbes t {y_t}0_s -y_t\frac{t-s}t\Big) = B_s.
\label{makealem}
\end{equation}
Then, by dominated convergence using again a uniform Gaussian bound from Lemma \ref{bessfluc},
\begin{equation}\label{dcgb}
\lim_{y\to\infty}\psi_\infty(y)=\lim_{t\to\infty}\psi_t(y_t)
= 
\E\Big[e^{\frac12\int_0^\infty\diffd s\,m''(s) B_s}\Big].
\end{equation}
It now remains to compute the right-hand-side. Let 
\begin{equation}
X_t:=\frac12\int_0^t\diffd s\,m''(s)B_s.
\end{equation}
By integration by parts,
\begin{equation}
X_t = \frac12 m'(t)B_t-\frac12\int_0^t m'(s)\,\diffd B_s = \frac12\int_0^t
\big(m'(t)-m'(s)\big)\diffd B_s
\end{equation}
so $X_t$ is a time change of Brownian motion with
\begin{equation}
\E\Big[e^{X_t}\Big] = e^{\frac12 \var(X_t)} = e^{\frac18
\int_0^t(m'(t)-m'(s))^2\, 2\diffd s }
\to e^{\frac14 \int_0^\infty (v-m'(s))^2\,\diffd s}
= e^{\Delta}.
\end{equation}
Therefore, by dominated convergence as in \eqref{dcgb}, $\E[e^{X_\infty}] = e^\Delta$.
\end{proof}

\bigbreak

\subsection{Proof of Theorem~\ref{thm1}}
%%%%%%%%%%%%%%%%%%%%%%%%%%%%%%%%%%%%%%%%%%%%%

Since $m(0)=0$ and $m''(s)=\mathcal O(1/s^2)$, we can write  
$m(s)= v s +\delta(s)$ with $\delta(0)=0$, $\delta(s)=\mathcal O(\log s)$, and $\delta'(s)=\mathcal O(1/s)$. 
Note that
\begin{equation}
\int_0^t\diffd s\, m'(s)^2=\int_0^t\diffd
s\Big(v^2+2v\delta'(s)+\delta'(s)^2\Big)=v^2t+2v\delta(t)
+4\Delta+\mathcal O\Big(\frac1t\Big),
\end{equation}
where we recall that $\Delta=\frac14\int_0^\infty\diffd s\,\delta'(s)^2$.
We now fix $x>0$, so that any terms written as $\mathcal O(f(t))$ might depend on $x$; since
$x$ is fixed this will not matter. For instance, instead of
\eqref{noxdep} we simply write that
 $\psi_t(y,x) = \psi_t(y)e^{\mathcal O(\frac{\log
 t}{t})}$.

We recall \eqref{q4}:
\begin{equation}\qop{t}{m(t)+x}{y}=\frac{\sinh\big(\frac{xy}{2t}\big)}{\sqrt{\pi t}} e^{\frac{m(t)}{2t}(y-x)-\frac{x^2+y^2}{4t}+t-\frac14\int_0^t\diffd s\,m'(s)^2} \psi_t(y,x).
\end{equation}
Substituting in the estimate above we get
\begin{equation}
\qop{t}{m(t)+x}{y}=\frac{1}{\sqrt{\pi t}} 
e^{t\big(1-\frac{v^2}4\big) -\frac
v 2\delta(t)-\Delta-\frac v2x + \mathcal O\big(\frac{\log t}t\big)}\sinh\big(\frac{xy}{2t}\big)
e^{\frac{v}{2}y + \frac{\delta(t)}{2t}y}\psi_t(y)e^{-\frac{y^2}{4t}}.
\end{equation}
Then since $h(x,t) = \int_0^\infty \diffd y\,\qop{t}{x}{y} h_0(y)$---see
\eqref{hq}---we have
\begin{equation}
h\big(m(t)+x,t\big) = \frac{1}{\sqrt{4\pi }t^{3/2}} 
e^{t\big(1-\frac{v^2}4\big) -\frac
v 2\delta(t)-\Delta-\frac v2x+\mathcal O\big(\frac{\log t}t\big)} H(x,t),
\label{hH}
\end{equation}
with
\begin{equation}
H(x,t)=\int_0^\infty\diffd y\,h_0(y)2t\sinh\left(\frac{xy}{2t}\right)
e^{\frac v 2 y +\frac{\delta(t)}{2t}y}\psi_t(y)e^{-\frac{y^2}{4t}}.
\label{defH}
\end{equation}
We now must choose $v$ and $\delta(t)$, depending on the initial condition,
such that~\eqref{hH} has a finite and non-zero limit as $t\to\infty$.

We use the following simple calculus lemma to evaluate $H(x,t)$. We defer
the proof to the end of this section.

\begin{lem}\label{lem:calculus}
Let $\phi(y)$ a bounded function such that
\begin{equation}
\phi(y)\sim A y^\alpha \qquad\text{as~$y\to\infty$}
\label{calculus1}
\end{equation}
for some $A>0$ and some $\alpha$.
If $\epsilon_t=o\big(t^{-1/2}\big)$ then, as $t\to\infty$,
\begin{subnumcases}{\int_0^\infty\diffd y\, \phi(y) e^{-\frac{y^2}{4t}+\epsilon_t y}\psi_t(y)}
\sim A\,2^\alpha e^{\Delta}\Gamma\Big(\frac{1+\alpha}2\Big) t^{\frac{1+\alpha}2} & if \quad $\alpha>-1$ \label{calculus2a}\\ [2ex]
\sim \frac A2 e^{\Delta}\log t												 	 & if \quad $\alpha=-1$ \label{calculus2b}\\[2ex]
\to \int_0^\infty\diffd y\,\phi(y)\psi_\infty(y)								 & if \quad $\alpha<-1$.\label{calculus2c}
\end{subnumcases}
If \eqref{calculus1} is replaced by $\phi(y)=\mathcal O(y^\alpha)$, then
\eqref{calculus2c} remains valid, and \eqref{calculus2a} and
\eqref{calculus2b}
are respectively replaced by $\mathcal
O(t^{(1+\alpha)/2})$ and $\mathcal O(\log t)$.
\end{lem}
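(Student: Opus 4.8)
The plan is to split the integral $\int_0^\infty \diffd y\,\phi(y)e^{-y^2/(4t)+\epsilon_t y}\psi_t(y)$ into a bounded piece near the origin and a tail, and to handle each regime of $\alpha$ by identifying which part of the $y$-axis dominates. First I would fix a large constant $Y_0$ so that $\phi(y)$ is comparable to $A y^\alpha$ (or bounded by $Cy^\alpha$) for $y\ge Y_0$, and observe that $\psi_t(y)$ is squeezed between $K_1$ and $K_2$ by Proposition \ref{easybounds}, so the contribution of $[0,Y_0]$ is $\mathcal O(1)$ uniformly in $t$ (using also that $\epsilon_t\to 0$ and $e^{-y^2/(4t)}\le 1$). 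This immediately disposes of case \eqref{calculus2c}: for $\alpha<-1$ the tail integral $\int_{Y_0}^\infty \diffd y\,\phi(y)e^{-y^2/(4t)+\epsilon_t y}\psi_t(y)$ is dominated by the integrable function $C K_2 y^\alpha e^{|\epsilon_t| y}$ on, say, $[Y_0,\sqrt t]$ plus a negligible Gaussian-tail remainder, and since $e^{-y^2/(4t)+\epsilon_t y}\to 1$ pointwise and $\psi_t(y)\to\psi_\infty(y)$ by \eqref{limpsi}, dominated convergence gives the stated limit $\int_0^\infty\diffd y\,\phi(y)\psi_\infty(y)$; the $\mathcal O(y^\alpha)$ variant follows the same way.

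For cases \eqref{calculus2a} and \eqref{calculus2b} the mass sits at scale $y\asymp\sqrt t$, so I would substitute $y=\sqrt t\,u$, turning the tail into $t^{(1+\alpha)/2}\int_{Y_0/\sqrt t}^\infty \diffd u\,\frac{\phi(\sqrt t u)}{(\sqrt t u)^\alpha} u^\alpha e^{-u^2/4}e^{\epsilon_t\sqrt t\, u}\psi_t(\sqrt t u)$. Here $\phi(\sqrt t u)/(\sqrt t u)^\alpha\to A$ uniformly on compacts of $(0,\infty)$; $\epsilon_t\sqrt t\to 0$ by hypothesis so $e^{\epsilon_t\sqrt t u}\to 1$; and the key input is that $\psi_t(\sqrt t u)\to e^{\Delta}$ — this is exactly the fourth bullet of Proposition \ref{easybounds}, applied with $y_t=\sqrt t\,u$, which goes to infinity. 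So the integrand converges pointwise to $A e^{\Delta} u^\alpha e^{-u^2/4}$. For $\alpha>-1$ this function is integrable at $0$ and $\int_0^\infty u^\alpha e^{-u^2/4}\diffd u = 2^\alpha\Gamma\big(\frac{1+\alpha}2\big)$ after the change of variable $w=u^2/4$, giving \eqref{calculus2a}. For $\alpha=-1$ the would-be limit diverges logarithmically at $0$, so instead I would show directly that the integral over $u\in[Y_0/\sqrt t,1]$ contributes $\sim \frac A2 e^{\Delta}\log(\sqrt t)=\frac A4 e^\Delta\log t$ — wait, more carefully: reverting to the $y$ variable, $\int_{Y_0}^{\sqrt t}\diffd y\, \frac{A}{y}\cdot K \approx A K\log\sqrt t$; tracking the constants $K\to e^\Delta$ and noting the region $y\in[\sqrt t,\infty)$ only adds $\mathcal O(1)$, one gets $\frac12 A e^\Delta\log t$, which is \eqref{calculus2b}.

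The main obstacle is making the convergence $\psi_t(\sqrt t u)\to e^\Delta$ effective enough, and uniform enough in $u$, to justify passing the limit under the $u$-integral — a naive pointwise statement is not sufficient because the domain of integration depends on $t$ (it reaches down to $Y_0/\sqrt t$) and, in case \eqref{calculus2b}, the limit is not integrable near $u=0$. I would handle this by a two-region argument: on $u\in[\delta,\infty)$ for fixed small $\delta$, use dominated convergence with the Gaussian-times-$K_2$ bound from Proposition \ref{easybounds}; on $u\in[Y_0/\sqrt t,\delta]$, use only the crude bounds $K_1\le\psi_t\le K_2$, $e^{\epsilon_t y}\le e^{o(1)}$, $e^{-y^2/(4t)}\le 1$ to sandwich that contribution between constant multiples of $\int_{Y_0/\sqrt t}^\delta u^\alpha\diffd u$, which is $\mathcal O(\delta^{1+\alpha})$ for $\alpha>-1$ (small with $\delta$) and $\asymp\log t$ for $\alpha=-1$ (with a prefactor we can pin down as $\delta\to 0$ after $t\to\infty$). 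Letting $\delta\to 0$ at the end closes the estimate. The deferred proof in the paper presumably follows exactly this scheme.
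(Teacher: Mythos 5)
Your treatment of cases \eqref{calculus2a} and \eqref{calculus2c} is sound and essentially matches the paper's: the same cut near the origin, the same scaling $y=u\sqrt t$, domination by $Cu^{\alpha}e^{-u^2/4}e^{u}$ (valid since $|\phi(y)|\le Cy^{\alpha}$ for $y\ge Y_0$ and $\epsilon_t\sqrt t\to0$), and the fourth bullet of Proposition~\ref{easybounds} giving $\psi_t(\sqrt t\,u)\to e^{\Delta}$ pointwise. The genuine gap is in case \eqref{calculus2b}. There the entire $\log t$ mass lives in the region you propose to control only with the crude bounds $K_1\le\psi_t\le K_2$: for every fixed $\delta>0$ the integral over $u\in[\delta,\infty)$ is $\mathcal O_\delta(1)$, so the divergence comes wholly from $u\in[Y_0/\sqrt t,\delta]$, i.e.\ $y\in[Y_0,\delta\sqrt t]$. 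Sandwiching that region between $K_1$ and $K_2$ yields only $K_1\tfrac A2\log t\lesssim\cdot\lesssim K_2\tfrac A2\log t$, and letting $\delta\to0$ after $t\to\infty$ does not improve this, because shrinking $\delta$ never removes any of the $\log t$ contribution from that region. So your scheme, as written, proves the $\mathcal O(\log t)$ statement but not the asymptotic $\sim\tfrac A2 e^{\Delta}\log t$; the phrase ``tracking the constants $K\to e^{\Delta}$'' is exactly the step that is missing a mechanism.

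To close it you need convergence of $\psi_t(y)$ to $e^{\Delta}$ on the whole range of scales $Y_0\le y\le\delta\sqrt t$ simultaneously. Two ways: (i) the paper's route, which substitutes $y=t^x$ so the integral becomes $(\log t)\int_0^{1/2}\diffd x\;t^x\phi(t^x)e^{-t^{2x-1}/4+\epsilon_t t^x}\psi_t(t^x)$; for each fixed $x\in(0,1/2)$ one has $t^x\to\infty$, so the integrand converges pointwise to $Ae^{\Delta}$ and dominated convergence on the fixed interval $(0,1/2)$ gives the constant (this also quietly handles the fact that $e^{-y^2/(4t)}$ is not close to $1$ when $y\asymp\sqrt t$: that edge is an $o(1)$ fraction in the $x$ variable); or (ii) upgrade the fourth bullet of Proposition~\ref{easybounds} to a uniform statement --- since it holds for \emph{every} function $y_t\to\infty$, a standard subsequence argument gives: for all $\varepsilon>0$ there exist $Y_\varepsilon,T_\varepsilon$ with $|\psi_t(y)-e^{\Delta}|\le\varepsilon$ whenever $y\ge Y_\varepsilon$, $t\ge T_\varepsilon$ --- and then integrate $\tfrac Ay(1+o(1))$ over $[Y_\varepsilon,\sqrt t/\log t]$, say, where $e^{-y^2/(4t)+\epsilon_t y}=1+o(1)$ uniformly, the remaining slice $[\sqrt t/\log t,\sqrt t]$ contributing only $\mathcal O(\log\log t)$. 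Either repair is short, but one of them is needed; without it the constant $e^{\Delta}$ in \eqref{calculus2b} is not established.
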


We now continue with the proof of Theorem \ref{thm1}. We distinguish two cases.

\subsubsection*{Case 1: $h_0(y)=\mathcal O \Big(y^\nu e^{-\frac v2y}\Big)$ for some $\nu$}
We introduce $H_1(t)$ such that $x H_1(t)$ is the same as $H(x,t)$ with the $\sinh$ expanded to first order:
\begin{equation}\label{defH1}
H_1(t)=
\int_0^\infty\diffd y\,\Big(h_0(y)e^{\frac v2y}\Big)y
e^{\frac{\delta(t)}{2t}y}\psi_t(y)e^{-\frac{y^2}{4t}}.
\end{equation}
For any $z\ge0$, by Taylor's theorem (with the Lagrange remainder), there
exists $w\in[0,z]$ such that $0\le \sinh(z)-z =
\frac{z^3}6\cosh(w)\le\frac{z^3}6e^z$.  It follows that
\begin{equation}
\Big| H(x,t)-x H_1(t)\Big|\le\frac{x^3}{24t^2}\int_0^\infty\diffd
y\,\Big(\big|h_0(y)\big|e^{\frac v2y}\Big)y^3
e^{\frac{x+\delta(t)}{2t}y}\psi_t(y)e^{-\frac{y^2}{4t}}.
\end{equation}
By applying Lemma~\ref{lem:calculus}
to $\phi(y)=\big|h_0(y)\big|e^{\frac v2y}y^3$
with $\alpha=\nu+3$ we obtain
\begin{equation}
H(x,t)-xH_1(t) = \begin{cases}
\mathcal O\left(t^{\nu/2}\right)&\text{if $\nu>-4$}
,\\
\mathcal O\left(t^{-2}\log t\right)&\text{if $\nu=-4$}
,\\
\mathcal O\left({t^{-2}}\right)&\text{if $\nu<-4$}
.
\end{cases}
\label{diffHH1}\end{equation}
We now apply Lemma~\ref{lem:calculus} to $H_1(t)$
with $\alpha=\nu+1$ and obtain 
\begin{equation}
x H_1(t)\sim\begin{cases}
\displaystyle
x\frac A2e^{\Delta}\log t
&\text{if  $h_0(y)\sim A y^{-2} e^{-\frac v2y}$ with $A>0$}
,\\[2ex] \displaystyle
xAe^{\Delta}2^{\nu+1}\Gamma\Big(1+\frac\nu2\Big)t^{1+\frac\nu2}
&\text{if  $h_0(y)\sim A y^\nu e^{-\frac v2y}$ with $A>0$ and
$\nu>-2$}
,\\[2ex] \displaystyle
x \int_0^\infty\diffd y \, h_0(y)y e^{\frac v2y}
\psi_\infty(y)
&\text{if  $h_0(y)=\mathcal O \Big(y^\nu e^{-\frac v2y}\Big)$ for some
$\nu<-2$}
,
\end{cases}
\label{equivH1}
\end{equation}
where we assumed that in the third case the right hand side is non-zero.
As the difference~\eqref{diffHH1} between $H(x,t)$ and $xH_1(t)$
is always asymptotically small compared to the values in the
right hand side of \eqref{equivH1}, it follows that \eqref{equivH1} also gives the asymptotic behaviour of $H(x,t)$. 

We now plug this estimate of $H(x,t)$ into~\eqref{hH}. To prevent
$h\big(m(t)+x,t\big)$ from growing exponentially fast we need to take
$v=2$. Then $\delta(t)$ must be adjusted (up to a constant $a$) to
kill the remaining time dependence.
We find
\begin{equation}
\delta(t)=\begin{cases}
\displaystyle
-\frac{1-\nu}2\log
t+a
+o(1)
&\text{ if  $h_0(y)\sim A y^\nu e^{-y}$ with $A>0$ and $\nu>-2$,}\\[3ex]
\displaystyle
-\frac32\log t+\log\log t +a+o(1)
&\text{ if  $h_0(y)\sim A y^{-2} e^{-y}$ with
$A>0$,}\\[3ex]
\displaystyle
-\frac32\log t+a+o(1)
&\text{ if  $h_0(y)=\mathcal O \big(y^\nu e^{-y}\big)$
for some $\nu<-2$.}
\end{cases}
\end{equation}
In \eqref{hH}, when $t\to\infty$, all the $t$-dependence disappears and
what remains is $\omega(x)$ from the Theorem, with the claimed value of
$\alpha$. This proves cases (b), (c) and (d) of Theorem~\ref{thm1}.

\subsubsection*{Case 2: $h_0(y)\sim A y^\nu e^{-\gamma y}$ with $\gamma<v/2$}

We write $h_0(y)=g_0(y)e^{-\gamma y}$ with $g_0(y)\sim A y^\nu$
so that \eqref{defH} becomes
\begin{equation}
H(x,t)=2t\int_0^\infty\diffd y\,g_0(y)\sinh\Big(\frac{xy}{2t}\Big)
\psi_t(y)e^{\frac{\delta(t)}{2t}y}e^{\frac v 2 y-\gamma y-\frac{y^2}{4t}}.
\end{equation}
The terms in the second exponential reach a maximum at $y=\lambda t$ with
$\lambda=v-2\gamma$. We make the change of variable $y=\lambda t +u\sqrt
t $; after rearranging we have
\begin{equation}
H(x,t)=2t^{\nu+\frac32}e^{\frac{\lambda^2}4t+\lambda\frac{\delta(t)}2}\int_{-\lambda\sqrt
t}^\infty\diffd u\,\frac{g_0(\lambda t+u\sqrt t)}{t^\nu}
\sinh\Big(\frac{\lambda x}{2}+\frac{ux}{2\sqrt t}\Big)
\psi_t(\lambda t+u\sqrt
t)e^{u\frac{\delta(t)}{2\sqrt
t}
-\frac{u^2}{4}}.
\label{tobebound}
\end{equation}
We bound each term in the integral with the goal of applying dominated
convergence.
\begin{itemize}
\item As $g_0$ is bounded for small $y$ and $g_0\sim A y^\nu$ for large $y$, we can
take $\tilde A$ such that $\big|g_0(y)\big|\le \tilde A (y+1)^\nu$. Then
\begin{equation}
\frac{\big|g_0(\lambda t + u\sqrt t )\big|}{t^\nu}\le
\tilde A \lambda^\nu \bigg(1  +\frac{u\sqrt t+1}{\lambda t}\bigg)^\nu
\le\tilde A  \lambda ^\nu e^{\frac{|\nu|(u\sqrt
t+1)}{\lambda t}}\le 2\tilde A\lambda^\nu e^u\ \text{for $t$ large
enough}.
\end{equation}

\item
We have the simple bound
\begin{equation}
\sinh\Big(\frac{\lambda x}{2}+\frac{ux}{2\sqrt t}\Big)
\le e^{\frac{\lambda x}{2}+\frac{ux}{2\sqrt t}}
\le e^{\frac{\lambda x}{2}+u}\ \text{for $t$ large
enough}.
\end{equation}

\item $\psi_t(\cdot)$ is bounded by Proposition~\ref{easybounds}.
\item Finally, $\exp\big(u\delta(t)/(2\sqrt t)\big)\le e^u$ for $t$ large enough.
\end{itemize}
We have bounded the integrand in~\eqref{tobebound}
by a constant times $\exp(3u-u^2/4)$ for $t$
large enough, so we can apply dominated convergence. As $t\to\infty$,
the
$g_0(\cdot)/t^\nu$ term converges to $A\lambda^\nu$, the $\sinh(\cdot)$
term to $\sinh(\lambda x/2)$, the $\psi_t(\cdot)$ term to $e^{\Delta}$
and the exponential to~$e^{-u^2/4}$. We are left with some constants and
the integral of $e^{-u^2/4}$, which is $\sqrt{4\pi}$, and finally:
\begin{equation}
H(x,t)
\sim
2t^{\nu+\frac32}e^{\frac{\lambda^2}4t+\lambda\frac{\delta(t)}2}
A\lambda^\nu
\sinh\Big(\frac{\lambda x}{2}\Big)
e^{\Delta}
\sqrt{4\pi}.
\end{equation}
In~\eqref{hH}, this gives
\begin{equation}
h\big(m(t)+x,t\big)= 
2\sinh\Big(\frac{\lambda x}2\Big)e^{-\frac v2x}
\times
e^{t\big(1-\frac{v^2}4+\frac{\lambda^2}4\big) -\frac
{v-\lambda} 2\delta(t)+o(1)}
t^\nu A\lambda^\nu
.
\end{equation}
Recall that $\lambda=v-2\gamma$. To avoid exponential growth, we need
$1-v^2/4+\lambda^2/4=0$, which implies $v=\gamma+1/\gamma$ with $\gamma<1$
because we started with the assumption $\gamma<v/2$.
As $\frac{v-\lambda}2=\gamma$, to have convergence
of $h\big(m(t)+x,t\big)$
we need $\delta(t)$ to be of the form
\begin{equation}
\delta(t)=\frac \nu\gamma\log t+a+o(1)\quad\text{for large $t$}.
\end{equation}
Writing the $\sinh(\cdot)$ as the difference of two exponentials leads to 
$2\sinh(\lambda x/2)e^{-vx/2}=e^{-\gamma x}-e^{-(1/\gamma)x}$; we then recover case~(a) of Theorem~\ref{thm1} with the claimed value of
$\omega(x)$ and $\alpha$.

This completes the proof of Theorem \ref{thm1}, subject to proving Lemma
\ref{lem:calculus}.\qed

\begin{proof}[Proof of Lemma \ref{lem:calculus}]
Recall from Proposition~\ref{easybounds} that $\psi_t(y)$ is
bounded in $t$ and $y$, 
$\psi_\infty(y):=\lim_{t\to\infty}\psi_t(y)$ exists, 
$\lim_{y\to\infty}\psi_\infty(y)$ exists and equals $e^{\Delta}$, and $\lim_{t\to\infty}\psi_t(t^\alpha)= e^{\Delta}$ for any
$\alpha>0$.

For $\alpha<-1$, the result is obtained with dominated convergence by
noticing that $e^{-y^2/(4t)+\epsilon_t y}$ is bounded by  $e^{t
\epsilon^2_t}$ (value obtained at $y=2t\epsilon_t$). With
$\epsilon_t=o\big(t^{-1/2}\big)$, this is bounded by a constant.

For $\alpha>-1$, cut the integral at $y=1$. The integral from 0 to 1 is
bounded, and in the integral from 1 to $\infty$ we make the substitution $y=u\sqrt t$:
\begin{equation}
\int_0^\infty\diffd y\, \phi(y) e^{-\frac{y^2}{4t}+\epsilon_t y}
\psi_t(y)
=\mathcal O(1)+ t^{\frac{1+\alpha}2}\int_{\frac1{\sqrt t}}^\infty\diffd
u \frac{\phi(u\sqrt t)}{t^{\alpha/2}}e^{-\frac {u^2}{4}+\sqrt t\epsilon_t u}
\psi_t(u\sqrt t).
\end{equation}
A simple application of dominated convergence then leads to
\begin{equation}
\int_0^\infty\diffd y\, \phi(y) e^{-\frac{y^2}{4t}+\epsilon_t y}
\psi_t(y)
=\mathcal O(1)+ t^{\frac{1+\alpha}2}\bigg(\int_0^\infty\diffd
u \, A u^\alpha e^{-\frac {u^2}{4}}e^{\Delta} + o(1)\bigg),
\end{equation}
and the substitution $t=u^2/4$ gives~\eqref{calculus2a}.

For $\alpha=-1$, we cut the integral at $y=\sqrt t$ and again make the
change of variable $y=u\sqrt t$ in the second part:
\begin{equation}
\int_0^\infty\diffd y\, \phi(y) e^{-\frac{y^2}{4t}+\epsilon_t y}
\psi_t(y)
=
\int_0^{\sqrt t}\diffd y\, \phi(y) e^{-\frac{y^2}{4t}+\epsilon_t y}
\psi_t(y)
+
\int_1^\infty\diffd u\,\sqrt t \phi(u\sqrt t) e^{-\frac{u^2}{4}
+\sqrt t\epsilon_t u }
\psi_t(u\sqrt t).
\end{equation}
Again by dominated convergence, the second integral has a limit; we
simply write it as $\mathcal O(1)$. For the first, the integrand is
bounded so the integral from $0$ to $1$ is certainly $\mathcal O(1)$, and
we may concentrate on the integral from $1$ to $\sqrt t$. Making the
substitution $y=t^x$, we have
\begin{equation}
\int_1^{\sqrt t} \diffd y\, \phi(y)
e^{-\frac{y^2}{4t} + \epsilon_t y} \psi_t(y) 
= (\log t)\int_0^{1/2} \diffd x\, t^x \phi(t^x)
e^{-\frac{t^{2x-1}}4+\epsilon_t t^x}
\psi_t(t^x).
\end{equation}
The integrand on the right converges for each $x\in(0,1/2)$ to $A e^\Delta$
so by dominated convergence,
\begin{equation}
\int_1^t  \diffd y \,\phi(y) e^{-\frac{y^2}{4t} + \epsilon_t
y} \psi_t(y)\sim \frac A2  e^{\Delta} \log t,
\end{equation}
as required.
\end{proof}

\section{Estimating $\psi_t$: finer bounds, and Proof of
Theorem~\ref{thm2}}
%%%%%%%%%%%%%%%%%%%%%%%%%%%%%%%%%%%%%%%%%%%%%%%%%%%%%%%%%%%%%%%%%%%%%%%%%%%%%%%%

We want to refine Proposition~\ref{easybounds} and estimate the speed of
convergence of $\psi_t(y,x)$ to its limit as $t\to\infty$. As we are
only interested up to errors of order $\frac{\log t}{t}$, it suffices to
consider the case $x=0$ since by \eqref{noxdep}, $\psi_t(y,x) =
\psi_t(y)e^{x\mathcal O(\frac{\log t}{t})}$.

Recall that
\begin{equation}
       \psi_t(y)     =\E\Big[e^{I_t(y)}\Big],
\qquad \psi_\infty(y)=\E\Big[e^{I(y)}\Big],
\label{psiagain}
\end{equation}
where, introducing $I_t(y):=I_t(y,0)$,
\begin{equation}
\begin{aligned}
I_t(y)&=\frac12\int_0^t\diffd s\, m''(s)\Big(\cbes t y 0_s-y\frac{t-s}t\Big)
=      \frac12\int_0^t\diffd s\, m''(s)\frac{t-s}t\Big(\bes
y_{\frac{st}{t-s}}-y\Big),
\\
I(y)&=\frac12\int_0^\infty\diffd
s\,m''(s)\big(\bes y_s-y\big).
\end{aligned}
\label{Iagain}
\end{equation}
We have used the change of time~\eqref{timechange} to give the second
expression of $I_t(y)$.
As in the hypothesis \eqref{thm2:condm1} of Theorem~\ref{thm2}, we suppose that $m$ is twice
continuously differentiable and 
\begin{equation}
m''(t) = \frac 3{2(t+1)^2} +r''(t) \quad \text{ with } \quad r''(t)=\mathcal O\left(\frac
1{t^{2+\eta}}\right), \quad \eta>0.
\label{newmprop}
\end{equation}

Our estimate of $\psi_t(y)$ is based on the following two propositions. By
writing $I_t(y) = I(y) - (I(y)-I_t(y))$ in the definition of $\psi_t(y)$,
and expanding the exponential in the small correction term $I(y)-I_t(y)$,
we show that:

\begin{prop}
\label{finer}
Assuming \eqref{newmprop}, the following holds uniformly in $y$:
\begin{equation}
\psi_t(y)=\psi_\infty(y)\Big(1-\E\big[I(y)-I_t(y)\big]\Big)+\mathcal
O\Big(\frac{\log t}{t}\Big) + y\mathcal O\Big(\frac{1}{t}\Big).
\label{eq:finer}
\end{equation}
\end{prop}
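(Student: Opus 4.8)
The plan is to write $\psi_t(y) = \E[e^{I_t(y)}] = \E[e^{I(y)} e^{-(I(y)-I_t(y))}]$ and Taylor-expand the second factor: $e^{-z} = 1 - z + \mathcal O(z^2 e^{|z|})$. This gives
\begin{equation*}
\psi_t(y) = \E\big[e^{I(y)}\big] - \E\big[e^{I(y)}\,(I(y)-I_t(y))\big] + \E\big[e^{I(y)}\,\mathcal O\big((I(y)-I_t(y))^2 e^{|I(y)-I_t(y)|}\big)\big].
\end{equation*}
The first term is exactly $\psi_\infty(y)$. For the second term I would like to pull $e^{I(y)}$ out and write it as $\psi_\infty(y)\,\E[I(y)-I_t(y)]$, which is \emph{not} literally true because $I(y)-I_t(y)$ is not independent of $I(y)$; so the actual argument is that $\E[e^{I(y)}(I(y)-I_t(y))]$ and $\psi_\infty(y)\E[I(y)-I_t(y)]$ differ by something controlled by the same bounds used for the error term. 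The cleanest route is probably: bound $|I(y)-I_t(y)|$ pointwise (realization by realization) by a Gaussian-tailed random variable times a deterministic function of $t$ that is $\mathcal O(\log t / t) + y\,\mathcal O(1/t)$, and then everything — the discrepancy in the linear term and the whole quadratic remainder — is absorbed into the stated error.

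The key step is therefore the pointwise estimate of $I(y) - I_t(y)$. Using the time-change representations in~\eqref{Iagain},
\begin{equation*}
I(y) - I_t(y) = \frac12\int_0^\infty \diffd s\, m''(s)\big(\bes y_s - y\big) - \frac12\int_0^t \diffd s\, m''(s)\frac{t-s}{t}\big(\bes y_{\frac{st}{t-s}} - y\big),
\end{equation*}
which I would split into three pieces: (i) the tail $\frac12\int_t^\infty m''(s)(\bes y_s - y)\diffd s$; (ii) the discrepancy on $[0,t]$ coming from the factor $\frac{t-s}{t}$ versus $1$; and (iii) the discrepancy from evaluating the Bessel process at $\frac{st}{t-s}$ versus $s$. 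For (i), since $m''(s) = \mathcal O(s^{-2})$ and $|\bes y_s - y| \le G\max(s^{1/2-\epsilon}, s^{1/2+\epsilon})$ by Lemma~\ref{bessfluc}, the integral is $G\cdot\mathcal O(t^{-1/2+\epsilon})$ — wait, that is too big; the point is that one must instead use the finer decay $m''(s) = \tfrac{3}{2(s+1)^2} + r''(s)$ from~\eqref{newmprop} is not what helps here either. The honest bound for (i): actually $\int_t^\infty s^{-2}\cdot s^{1/2+\epsilon}\diffd s \asymp t^{-1/2+\epsilon}$, so to get $\mathcal O(\log t/t)$ I must be more careful and use that $\bes y_s - y$ has \emph{mean} of order $\int^s u^{-1/2}\diffd u \asymp s^{1/2}$ only in absolute value, but in expectation against $e^{I(y)}$ there is cancellation. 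This is the main obstacle: getting the bound down to $\mathcal O(\log t/t + y/t)$ rather than the crude $\mathcal O(t^{-1/2+\epsilon})$ requires exploiting that $\E[\bes y_s - y]$ and the relevant conditional expectations grow like $s^{1/2}$ \emph{times a constant}, and then noticing $\int_t^\infty s^{-2} s^{1/2}\diffd s$ is still $t^{-1/2}$ — so in fact the gain must come from elsewhere. Re-examining: the factor $\frac{t-s}{t}$ in piece (ii) vanishes near $s=t$, and the reparametrization in (iii) only matters near $s = t$; combining pieces (i)+(ii)+(iii), the contributions near $s=t$ should cancel to leading order, leaving genuinely $\mathcal O(\log t/t)$. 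I would carry this out by grouping (i), (ii), (iii) together rather than bounding each separately, changing variables to $u = \frac{st}{t-s}$ so that the two integrals over $[0,\infty)$ in $u$ are directly compared, and the integrand difference carries an extra factor of order $s/t$ (equivalently $u/(t+u)$) which supplies the missing power of $t$; combined with $|m''(s)| s^{1/2+\epsilon}$ this yields $\int_0^\infty \frac{u}{t}\cdot u^{-3/2+\epsilon}\,\diffd u$-type bounds that are $\mathcal O(t^{-1/2+\epsilon})$ still — so the truly decisive ingredient is that $m''(s)=\mathcal O(s^{-2})$ makes the integrand integrable at $\infty$ and the cutoff error plus reparametrization error combine to $\mathcal O(\log t/t)$ after using $|\bes y_s-y|\le G s^{1/2+\epsilon}$ only on the bulk and the deterministic $m''$ decay near $t$; the $y\,\mathcal O(1/t)$ term comes from the regime $s\asymp t$ where $\bes y_s - y$ can be as large as $y$ only through the starting-point dependence, bounded via $\bes y_s - y \le \bes 0_s$ plus the $\frac{t-s}{t}y$ shift in~\eqref{beslem2}.

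So concretely, the steps in order: (1) record $\psi_t(y) - \psi_\infty(y) = -\E[e^{I(y)}(I(y)-I_t(y))] + R_t(y)$ with $R_t(y)$ the quadratic remainder; (2) prove the pointwise bound $|I(y)-I_t(y)| \le G\cdot\big(\mathcal O(\log t/t) + y\,\mathcal O(1/t)\big)$ with $G$ Gaussian-tailed and uniform in $y$, via the grouped three-piece estimate above using Lemmas~\ref{beslem}--\ref{bessfluc}; (3) deduce $R_t(y) = \mathcal O(\log t/t) + y\mathcal O(1/t)$ by dominated convergence (the Gaussian tail of $G$ and the boundedness of $e^{I(y)}$ from Proposition~\ref{easybounds} make $\E[e^{I(y)}G^2 e^{G(\cdots)}]$ finite and uniformly bounded); (4) replace $\E[e^{I(y)}(I(y)-I_t(y))]$ by $\psi_\infty(y)\E[I(y)-I_t(y)]$ at the cost of $\E[|e^{I(y)} - \psi_\infty(y)|\cdot|I(y)-I_t(y)|]$, which is again bounded by the error term since $|e^{I(y)}| + \psi_\infty(y) = \mathcal O(1)$ and the $|I(y)-I_t(y)|$ factor already carries the required smallness. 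The main obstacle is unquestionably step (2): extracting the extra power of $t$ beyond the naive $t^{-1/2+\epsilon}$, which forces one to treat the truncation error, the $\frac{t-s}{t}$-factor error, and the time-reparametrization error \emph{together} and to use the sharp $\mathcal O(s^{-2})$ decay of $m''$ rather than just integrability.
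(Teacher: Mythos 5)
There is a genuine gap, and it sits exactly where you suspected: your step (2) is not just hard to prove, it is false. The quantity $I(y)-I_t(y)$ is genuinely of order $t^{-1/2}$, not $\log t/t$: indeed Proposition~\ref{finer2} shows $\E\big[I(y)-I_t(y)\big]=\frac{3\sqrt\pi}{\sqrt t}+\dots$, so no pointwise bound $|I(y)-I_t(y)|\le G\big(\mathcal O(\log t/t)+y\,\mathcal O(1/t)\big)$ can hold (it would force the expectation to be $\mathcal O(\log t/t)$ as well, contradicting the $3\sqrt\pi/\sqrt t$ term that is the whole point of Theorem~\ref{thm2}). The hoped-for cancellation between your pieces (i), (ii), (iii) near $s=t$ does not occur at the level of the path; what is true (and what the paper proves as Lemma~\ref{lemIIC}) is only $I(y)-I_t(y)=G_t\,\mathcal O(t^{-1/2})$ with $G_t$ Gaussian-tailed. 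That weaker bound is enough for the quadratic remainder in the exponential expansion (your steps (1) and (3), which match the paper's Lemma~\ref{lemdl} with $n=1$, $\epsilon_t=t^{-1/2}$, giving an $\mathcal O(1/t)$ remainder), but it breaks your step (4): bounding $\big|\E[e^{I(y)}(I(y)-I_t(y))]-\psi_\infty(y)\E[I(y)-I_t(y)]\big|$ by $\E\big[|e^{I(y)}-\psi_\infty(y)|\,|I(y)-I_t(y)|\big]$ only yields $\mathcal O(t^{-1/2})$, since $|e^{I(y)}-\psi_\infty(y)|$ is of order one; the proposition demands that this covariance be an order of magnitude smaller than either factor, which no absolute-value or Cauchy--Schwarz argument can deliver.

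The missing idea is decorrelation, not smallness: $e^{I(y)}$ is essentially determined by the early part of the Bessel path, while $I(y)-I_t(y)$ is driven by the path at times of order $t$, and one must quantify their asymptotic independence. This is the content of the paper's Lemma~\ref{maindeco}, which writes the covariance as $\int_0^\infty\diffd s\,m''(s)\,w(y,s)-\int_0^t\diffd s\,m''(s)\frac{t-s}{t}w\big(y,\frac{ts}{t-s}\big)$ with $w(y,s)=\E[e^{I(y)}(\bes y_s-y)]-\E[e^{I(y)}]\E[\bes y_s-y]$, and of Proposition~\ref{wlem}, which establishes $|w(y,s)|\le C\log(s+1)$, $|w(y,s)|\le C\big(1+y\frac{\log(s+1)}{\sqrt s}\big)$ and an increment bound, by conditioning on $\bes y_s=z$, identifying $\E[e^{I(y)}\mid\bes y_s=z]=\E[e^{\tilde I_s(y,z)}]$, and controlling the $z$-dependence of this conditional expectation via the bridge-comparison estimates of Lemma~\ref{intdiff}. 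Without an argument of this covariance type, your outline cannot reach the error $\mathcal O(\log t/t)+y\,\mathcal O(1/t)$; it stalls at $\mathcal O(t^{-1/2})$, which is precisely the size of the main term the proposition is designed to isolate.
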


Further, some straightforward computations give that:

\begin{prop}
\label{finer2}
Assuming \eqref{newmprop}, the following holds uniformly in $y$:
\begin{equation}
\E\big[I(y)-I_t(y)\big] = \frac{3\sqrt\pi}{\sqrt t} 
+y\mathcal O\left(\frac{\log t}t\right)
+\begin{cases} 
\displaystyle
\mathcal O\left(\frac{1}{t} \right)& \text{if $\eta>1/2$}
,\\[1.5ex] \displaystyle
\mathcal O\left(\frac{\log t}{t} \right)& \text{if $\eta=1/2$}
,\\[1.5ex] \displaystyle
\mathcal O\left(\frac1{t^{1/2+\eta}}\right) & \text{if $\eta<1/2$}.
\end{cases}
\end{equation}
\end{prop}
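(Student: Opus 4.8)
The strategy is to take expectations, so that the random functionals $I(y)$ and $I_t(y)$ become deterministic integrals of the Bessel mean $\theta_y(s):=\E[\bes{y}_s-y]$, and then to evaluate these by hand. Using the time change \eqref{timechange} (as in \eqref{Iagain}), $I_t(y)=\frac12\int_0^t\diffd s\,m''(s)\frac{t-s}{t}\big(\bes{y}_{st/(t-s)}-y\big)$, hence
\[
\E\big[I(y)-I_t(y)\big]=\tfrac12\int_t^\infty\diffd s\,m''(s)\theta_y(s)+\tfrac12\int_0^t\diffd s\,m''(s)\Big[\theta_y(s)-\tfrac{t-s}{t}\theta_y\big(\tfrac{st}{t-s}\big)\Big].
\]
I will need a few facts about $\theta_y$. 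First, $\theta_0(s)=4\sqrt s/\sqrt\pi$ exactly: from the SDE \eqref{sde bessel}, $\theta_0(s)=2\int_0^s\E[1/\bes0_v]\diffd v$, and a short computation (e.g.\ realising $\bes0$ at a fixed time as the modulus of a three-dimensional Brownian motion run at speed $2$) gives this value, whence $\E[1/\bes0_v]=1/\sqrt{\pi v}$ by differentiation. Second, writing $\theta_y=\theta_0-e_y$ with $e_y:=\theta_0-\theta_y$, Lemma~\ref{beslem} gives $0\le e_y(s)\le\min\big(y,\theta_0(s)\big)$; moreover $e_y(s)=2\int_0^s\big(\E[1/\bes0_v]-\E[1/\bes{y}_v]\big)\diffd v$ is nondecreasing with $e_y(0)=0$, and, crucially, $e_y'(s)=2\big(\E[1/\bes0_s]-\E[1/\bes{y}_s]\big)\le 2\E[1/\bes0_s]=2/\sqrt{\pi s}$ uniformly in $y$ (since $\bes{y}_s\ge\bes0_s$ by Lemma~\ref{beslem}).

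\textbf{The main term.} Substitute $m''(s)=\tfrac3{2(s+1)^2}+r''(s)$ and $\theta_y=\theta_0-e_y$, and isolate the $\theta_0$-contribution. Since $\theta_0\propto\sqrt{\cdot}$ one has the clean identity $\theta_0(s)-\tfrac{t-s}{t}\theta_0(\tfrac{st}{t-s})=\tfrac{4}{\sqrt\pi}\sqrt s\,\big(1-\sqrt{1-s/t}\big)$, so the $\tfrac32$-part of the $\theta_0$-contribution is
\[
\frac{3}{\sqrt\pi}\bigg[\int_t^\infty\frac{\sqrt s}{(s+1)^2}\diffd s+\int_0^t\frac{\sqrt s\,(1-\sqrt{1-s/t})}{(s+1)^2}\diffd s\bigg].
\]
Here $\int_t^\infty\frac{\sqrt s}{(s+1)^2}\diffd s=\tfrac2{\sqrt t}+\mathcal O(t^{-3/2})$ and, after $s=tx$, the second integral is $\tfrac1{\sqrt t}\int_0^1\frac{1-\sqrt{1-x}}{x^{3/2}}\diffd x+\mathcal O(1/t)$, with $\int_0^1\frac{1-\sqrt{1-x}}{x^{3/2}}\diffd x=\pi-2$ (substitute $u=\sqrt{1-x}$). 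The bracket is thus $\tfrac{2+(\pi-2)}{\sqrt t}+\mathcal O(1/t)=\tfrac{\pi}{\sqrt t}+\mathcal O(1/t)$, giving exactly $\tfrac{3\sqrt\pi}{\sqrt t}+\mathcal O(1/t)$. The $r''$-part of the $\theta_0$-contribution uses only $|r''(s)|\le C(s+1)^{-2-\eta}$ and $1-\sqrt{1-s/t}\le\min(1,s/t)$: the tail piece is $\le C'\int_t^\infty s^{-3/2-\eta}\diffd s=\mathcal O(t^{-1/2-\eta})$, and the piece on $[0,t]$ is $\le\tfrac{C'}{t}\int_0^t s^{3/2}(s+1)^{-2-\eta}\diffd s$, which is $\mathcal O(1/t)$, $\mathcal O(\log t/t)$ or $\mathcal O(t^{-1/2-\eta})$ according as $\eta>\tfrac12$, $\eta=\tfrac12$, $\eta<\tfrac12$ — precisely the three-case error of the statement (an $\mathcal O(1/t)$ is absorbed in each case).

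\textbf{The $y$-dependence.} It remains to bound the two $e_y$-contributions. The tail one, $-\tfrac12\int_t^\infty m''(s)e_y(s)\diffd s$, is $\le\tfrac y2\int_t^\infty|m''(s)|\diffd s=y\mathcal O(1/t)$ using $e_y(s)\le y$ and $|m''(s)|\le C(s+1)^{-2}$. For the other, with $\rho=\tfrac{st}{t-s}\ge s$ and $\tfrac{t-s}{t}=\tfrac s\rho$, $1-\tfrac s\rho=\tfrac st$,
\[
e_y(s)-\tfrac{t-s}{t}e_y\big(\tfrac{st}{t-s}\big)=\tfrac st\,e_y(\rho)-\big(e_y(\rho)-e_y(s)\big),
\]
both terms nonnegative; the first is $\le\tfrac st y$, and for the second, $e_y(\rho)-e_y(s)=\int_s^\rho e_y'\le\tfrac4{\sqrt\pi}(\sqrt\rho-\sqrt s)$ with $\sqrt\rho-\sqrt s=\sqrt s\big(\tfrac1{\sqrt{1-s/t}}-1\big)\le C\tfrac{s^{3/2}}{t}$ for $s\le t/2$ (and trivially $\le e_y(\rho)\le y$ for $s\in(t/2,t)$). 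Hence $\big|e_y(s)-\tfrac{t-s}{t}e_y(\tfrac{st}{t-s})\big|\le C\tfrac{sy}{t}$ on $[0,t/2]$ and $\le Cy$ on $(t/2,t)$; integrating against $|m''(s)|\le C(s+1)^{-2}$ gives $\tfrac{Cy}{t}\int_0^{t/2}\tfrac{\diffd s}{s+1}+Cy\int_{t/2}^\infty\tfrac{\diffd s}{(s+1)^2}=y\mathcal O(\log t/t)$. Collecting, $\E[I(y)-I_t(y)]=\tfrac{3\sqrt\pi}{\sqrt t}+y\mathcal O(\log t/t)+(\text{three-case }\eta\text{-error})$.

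\textbf{Main obstacle.} Once the reduction to scalar integrals is in place the rest is essentially bookkeeping; the one genuinely delicate input is the fine control of the Bessel mean $\theta_y$ — in particular the uniform-in-$y$ bound $e_y'(s)\le 2/\sqrt{\pi s}$, which exhibits the near-cancellation between the two $e_y$-integrals (each of which is individually only $\mathcal O(1)$) and thereby upgrades the naive $\mathcal O(1)$ estimate to the required $y\mathcal O(\log t/t)$. If more were needed one could invoke the exact formula $\E[1/\bes{y}_s]=\tfrac1y\,\mathrm{erf}\big(y/(2\sqrt s)\big)$, coming from the realisation of $\bes{y}$ as Brownian motion conditioned to stay positive (Doob $h$-transform by $h(x)=x$), from which one even deduces that $e_y$ is concave; but the derivative bound above is sufficient.
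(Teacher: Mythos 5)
Your reduction to the deterministic Bessel mean $\theta_y(s)=\E[\bes y_s-y]$ is sound, and two of your three blocks are correct and essentially parallel the paper's own computation with $\mu(y,s)=\E[\bes y_s]-y$: the $\theta_0$-part of the $\tfrac{3}{2(s+1)^2}$ term does give $\tfrac{3\sqrt\pi}{\sqrt t}+\mathcal O(1/t)$ (your $\pi-2$ integral checks out), and your bound $\tfrac{C}{t}\int_0^t s^{3/2}(s+1)^{-2-\eta}\diffd s$ for the $r''$-part reproduces the paper's three-case error. The problem is the $y$-dependence step on $[0,t/2]$. You bound the two nonnegative pieces of $e_y(s)-\tfrac{t-s}{t}e_y(\rho)$ by $\tfrac{s}{t}y$ and by $\tfrac4{\sqrt\pi}(\sqrt\rho-\sqrt s)\le C\tfrac{s^{3/2}}{t}$ respectively, and then assert ``hence $\le C\tfrac{sy}{t}$''. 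That does not follow: the second bound carries no factor of $y$, and if you use it as is, $\int_0^{t/2}\tfrac{s^{3/2}}{t(s+1)^2}\diffd s=\mathcal O(t^{-1/2})$ — the same order as the main term, with no $y$, so it cannot be absorbed into $y\mathcal O(\log t/t)$ or the $\eta$-cases. Moreover the facts you actually establish about $e_y$ (namely $0\le e_y\le\min(y,\theta_0)$, $e_y$ nondecreasing, $e_y'(v)\le 2/\sqrt{\pi v}$ uniformly in $y$) genuinely do not imply the pointwise bound $e_y(\rho)-e_y(s)\le C\tfrac{sy}{t}$: a function obeying all three constraints can spend its whole allowance $y$ inside the short interval $(s,\rho)$ whenever $y\lesssim s^{3/2}/t$. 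Since uniformity in $y$ is exactly what the proposition is about, this is a genuine gap, not a cosmetic one.

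The step is repairable within your framework. (i) Use the extra structure you mention but explicitly set aside: from $\E[1/\bes y_v]=\tfrac1y\,\mathrm{erf}\big(y/(2\sqrt v)\big)$, or simply from concavity of $e_y$ together with $e_y(0)=0$, one gets $e_y'(v)\le e_y(v)/v\le y/v$, whence $e_y(\rho)-e_y(s)\le y\log(\rho/s)\le 2ys/t$ for $s\le t/2$, which is precisely the missing pointwise bound. (ii) Alternatively avoid pointwise bounds altogether: by Fubini,
\begin{equation*}
\int_0^{t/2}\frac{e_y(\rho(s))-e_y(s)}{(s+1)^2}\,\diffd s
=\int_0^\infty e_y'(v)\bigg(\int_0^{t/2}\indic{s\le v\le\rho(s)}\frac{\diffd s}{(s+1)^2}\bigg)\diffd v ,
\end{equation*}
and for each $v$ the inner set is $\big[\tfrac{vt}{t+v},\min(v,t/2)\big]$ (empty if $v>t$), of length at most $v^2/(t+v)$ and on which $(s+1)^{-2}\le(1+v/2)^{-2}$; hence the inner integral is $\mathcal O(1/t)$ uniformly in $v$, and the whole expression is at most $\tfrac{C}{t}\int_0^\infty e_y'\le \tfrac{Cy}{t}$, using only monotonicity and $e_y\le y$. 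Note that the paper sidesteps this issue by a different organisation: it changes variables in the $\tfrac32$-part before comparing, so that the only input needed is the crude bound $0\le\mu(0,u)-\mu(y,u)\le y$, while the $r''$-part is handled with the increment bound $\mu(y,s+\delta)-\mu(y,s)\le\tfrac4{\sqrt\pi}(\sqrt{s+\delta}-\sqrt s)$; your arrangement, which compares $e_y$ at two coupled times under the full $m''$, is fine but requires the $y$-weighted increment control above.
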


We prove Propositions~\ref{finer} and~\ref{finer2} in Sections~\ref{prooffiner}
and~\ref{prooffiner2}, after some preparatory work in Section \ref{decorsec}. We now show how to prove Theorem~\ref{thm2} from these two propositions.

\begin{proof}[Proof of Theorem~\ref{thm2}]
We assume that $m(t)$ satisfies the hypothesis~\eqref{thm2:condm1} of
Theorem~\ref{thm2}:
\begin{equation}
m(t)=2t-\frac32\log (t+1) +a + r(t)\quad\text{with $r(t)=o(1)$ and
$r''(t)=\mathcal O\Big(\frac1{t^{2+\nu}}\Big)$ for large~$t$}.
\label{m(t)finer}
\end{equation}
As in the proof of Theorem~\ref{thm1}, we recall that
$h\big(m(t)+x,t\big)$ is related to $H(x,t)$ through~\eqref{hH} and that
$H(x,t)$ is given by \eqref{defH}. With $v=2$ and $\delta(t)=-(3/2)\log
(t+1)+a+r(t)$, these two equations read:
\begin{gather}
h\big(m(t)+x,t\big)=\frac1{\sqrt{4\pi}}e^{-a-r(t)-\Delta-x+\mathcal
O\big(\frac{\log t}t\big)}H(x,t),\label{hHagain}\\
H(x,t)=\int_0^\infty\diffd
y\,\Big(h_0(y)e^y\Big)2t\sinh\Big(\frac{xy}{2t}\Big
)e^{\frac{-(3/2)\log (t+1) +a+r(t)}{2t}y-\frac{y^2}{4t}}\psi_t(y),
\end{gather}
We compute $H(x,t)$ for an initial
condition $h_0(x)=\mathcal O\big(x^\nu e^{-x}\big)$ for some $\nu<-2$.
In~\eqref{defH1} in the proof of Theorem~\ref{thm1}, we introduced 
$H_1(t)$ which is $H(x,t)/x$ with the sinh replaced by its first order
expansion:
\begin{equation}
H_1(t)=
\int_0^\infty\diffd y\,\Big(h_0(y)e^{y}\Big)y
e^{\frac{-(3/2)\log (t+1) +a+ r(t)}{2t}y-\frac{y^2}{4t}}\psi_t(y),
\end{equation}
and we showed in~\eqref{diffHH1} that the difference between $H(x,t)$ and
$xH_1(t)$ is very small. We continue to simplify the integral by introducing successive simplifications
\begin{equation}\begin{aligned}
H_2(t)&=
\int_0^\infty\diffd y\,\Big(h_0(y)e^{y}\Big)y
e^{-\frac{y^2}{4t}}\psi_t(y),\\H_3(t)&=
\int_0^\infty\diffd y\,\Big(h_0(y)e^{y}\Big)y
e^{-\frac{y^2}{4t}}\psi_\infty(y),\\
H_4&= \int_0^\infty\diffd y\,\Big(h_0(y)e^{y}\Big)y \psi_\infty(y),
\end{aligned}\end{equation}
and by writing
\begin{multline}
H(x,t)=\Big(H(x,t)-xH_1(t)\Big)+x\Big(H_1(t)-H_2(t)\Big)+x\left(H_2(t)-\left[1-\frac{3\sqrt\pi}{\sqrt t}\right]H_3(t)\right)
\\+x\left[1-\frac{3\sqrt\pi}{\sqrt t}\right]\Big(H_3(t)-H_4\Big)+x\left[1-\frac{3\sqrt\pi}{\sqrt t}\right]H_4.\label{coll}
\end{multline}
We now bound the successive differences in the above expression, as we did in \eqref{diffHH1}, for the first one.

For $t$ large enough, $-\frac32\log (t+1)+a +r(t)<0$ and for $z>0$ we have $0\le 1-e^{-z}\le z$. Thus
\begin{equation}
\Big| H_2(t)-H_1(t)\Big|
\le \frac{\frac32\log (t+1) -a -r(t)}{2t}
\int_0^\infty\diffd y\,\Big(\big|h_0(y)\big|e^{y}\Big)y^2
e^{-\frac {y^2}{4t}}\psi_t(y).
\end{equation}
An application of Lemma~\ref{lem:calculus} with $\phi(y)=h_0(y)e^{y}y^2$
and hence $\alpha=\nu+2$ then gives
\begin{equation}\label{diffH1H2}
H_1(t)-H_2(t)=\begin{cases}
\mathcal O\left(t^{\frac{1+\nu}2}\log t\right)&\text{if $\nu>-3$}
,\\[1ex]
\mathcal O\left(\frac{\log^2 t}t\right)&\text{if $\nu=-3$}
,\\[1ex]
\mathcal O\left(\frac{\log t}{t}\right)&\text{if $\nu<-3$}
.
\end{cases}
\end{equation}

For the difference involving $H_2$ and $H_3$, we use
Propositions~\ref{finer} and~\ref{finer2} which give that
uniformly in
$y$,
\begin{equation}
\label{eq:finer2}
\psi_t(y)=\psi_\infty(y)\left(1-\frac{3\sqrt\pi}{\sqrt t}\right)
+y \mathcal O\left(\frac{\log t}{t}\right)
+\begin{cases} 
\mathcal O\left(\frac{1}{t} \right)& \text{if $\eta>1/2$}
,\\[1.5ex]
\mathcal O\left(\frac{\log t}{t} \right)& \text{if $\eta=1/2$}
,\\[1.5ex]
\mathcal O\left(\frac1{t^{1/2+\eta}}\right) & \text{if $\eta<1/2$}
.
\end{cases}
\end{equation}
We get
\begin{equation}\label{diffH2H3}
\begin{aligned}
H_2(t)-\left(1-\frac{3\sqrt\pi}{\sqrt t}\right)H_3(t)
&=\int_0^\infty\diffd y\,\Big(h_0(y) e^y\Big)
y e^{-\frac{y^2}{4t}}\left[
\psi_t(y)-\psi_\infty(y)\left(1-\frac{3\sqrt\pi}{\sqrt t}\right)
\right],\\
&=\mathcal
O\left(\frac1{t^{1/2+\eta}}\right)
+\begin{cases}
\mathcal O\left(t^{\frac{1+\nu}2}\log t\right)&\text{if $\nu>-3$}
,\\[1ex]
\mathcal O\left(\frac{\log^2 t}t\right)&\text{if $\nu=-3$}
,\\[1ex]
\mathcal O\left(\frac{\log t}{t}\right)&\text{if $\nu<-3$}
.
\end{cases}
\end{aligned}
\end{equation}
Indeed, the $y\mathcal O\big(\frac{\log t}{t}\big)$ gives the same correction as
in~\eqref{diffH1H2} by another application of Lemma~\ref{lem:calculus} with
$\alpha=\nu+2$. As $\int\diffd y\,\big|h_0(y)\big|e^y y<\infty$ because
$\nu<-2$, the
contribution of the $y\mathcal O\big(\frac{\log t}{t}\big)$ term subsumes the
other $\mathcal O$ in \eqref{eq:finer2} except in the case
$\eta<\frac12$.

Finally, notice that $\big|H_4\big|<\infty$ because we supposed $\nu<-2$.
Recalling $\psi_\infty(y)\le K_2$, one has
\begin{equation}\label{diffH3H4}
\begin{aligned}
\Big| H_4-H_3(t)\Big|&\le
\int_0^\infty\diffd
y\, \Big(\big|h_0(y)\big|e^y\Big)y
	\Big(1-e^{-\frac{y^2}{4t}}\Big)\psi_\infty(y),
\\ &\le K_2
\int_0^{\sqrt t}
\diffd y\, \Big(\big|h_0(y)\big|e^y\Big)y\frac{y^2}{4t}
+K_2\int_{\sqrt t}^{\infty}
\diffd y\, \Big(\big|h_0(y)\big|e^y\Big)y
,
\\ &=
\begin{cases}
\mathcal O\left(t^{1+\frac\nu2}\right)&\text{if $-2>\nu>-4$}
,\\[1ex]
\mathcal O\left(\frac{\log t}t\right)&\text{if $\nu=-4$}
,\\[1ex]
\mathcal O\left(\frac1t\right)&\text{if $\nu<-4$}
,
\end{cases}
\end{aligned}
\end{equation}
where we used $h_0(y)e^y=\mathcal O(y^\nu)$. The end result comes from the
integral from 0 to $\sqrt t$; the other integral is always $\mathcal
O(t^{1+\nu/2})$.

Finally, collecting the differences \eqref{diffHH1}, \eqref{diffH1H2},
\eqref{diffH2H3} and \eqref{diffH3H4} leads  with \eqref{coll} to
\begin{equation}
\label{collect}
H(x,t)=x H_4\left[1-\frac{3\sqrt\pi}{\sqrt t}
+\mathcal O \left({t^{1+\frac\nu2}}\right)
+\mathcal O \left(\frac1{t^{1/2+\eta}}\right)
+\mathcal O\left(\frac{\log t}{t}\right)\right].
\end{equation}
Substituting into \eqref{hHagain} and expanding $e^{-r(t)}$
leads to the main expression  \eqref{thm2:main} of Theorem~\ref{thm2}, with
the value $\alpha$ given in Theorem~\ref{thm1}.

We now turn to the second part of Theorem~\ref{thm2} and
assume that $h_0(y)\sim A y^\nu e^{-y}$ with $-4<\nu<-2$. We look for an
estimate of $H_4-H_3(t)$ which is more precise than~\eqref{diffH3H4}.

Writing  $H_4-H_3(t)$ as a single integral and doing the change
of variable $y=u\sqrt t$ one gets
\begin{equation}\label{diffH3H4bis}
H_4-H_3(t)= t^{1+\frac\nu2} \int_0^\infty\diffd u\,\frac{h_0(u\sqrt
t)e^{u\sqrt t}}{t^{\nu/2}}u\Big(1-e^{-\frac{u^2}4}\Big)\psi_\infty(u\sqrt
t).
\end{equation}
A simple application of dominated convergence then gives
\begin{equation}
H_4-H_3(t)\sim t^{1+\frac\nu2} Ae^{\Delta}\int_0^\infty\diffd
u\,u^{\nu+1}
\Big(1-e^{-\frac{u^2}4}\Big)
= - Ae^{\Delta}
2^{\nu+1}\Gamma\Big(\frac\nu2+1\Big)t^{1+\frac\nu2},
\end{equation}
and \eqref{collect} becomes
\begin{equation}
H(x,t) = x H_4\left[1-\frac{3\sqrt\pi}{\sqrt t}
\right]
+ xAe^{\Delta}2^{\nu+1}\Gamma\Big(\frac\nu2+1\Big)t^{1+\frac\nu2}
+o\Big(t^{1+\frac\nu2}\Big)
+\mathcal O\left(\frac1{t^{\frac12+\eta}}\right)
.
\end{equation}
This leads with \eqref{hHagain} to \eqref{thm2:main}.
\end{proof}

\subsection{Decorrelation between $I(y)$ and $\bes y_s$}\label{decorsec}

A large part of our argument relies on a statement that roughly says
``$I(y)$ and $\bes y_s$ are almost independent for large $s$''. The
following proposition makes this precise.

\begin{prop}\label{wlem}
Suppose that $m$ is twice continuously differentiable with $m''(t)
=\mathcal O({1}/{t^2})$. 
Define
\begin{equation}\label{wdef}
w(y,s) = \E\Big[e^{I(y)}\big(\bes y _s-y\big)\Big]
        -\E\Big[e^{I(y)}\Big]\E\big[\bes y _s-y\big].
\end{equation}
There exists a constant $C>0$ such that
\begin{equation}
\begin{aligned}
|w(y,s)| &\le C\log(s+1) &&\hbox{ for all $s,y\ge 0$,}\\
|w(y,s)| &\le C(1+y\frac{\log(s+1)}{\sqrt s}) &&\hbox{ for all $s,y\ge 0$,}\\
\Big|w(y,s+\delta)-w(y,s)\Big|&\le C\frac{\delta}{s+1} &&\hbox{for all
$y\ge0$, whenever $0\le\delta\le s^2$}.
\end{aligned}
\label{interdeco}
\end{equation}
\end{prop}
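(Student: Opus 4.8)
The plan is to control the covariance $w(y,s)$ by using the Markov property of the Bessel process at time $s$ (or slightly before) together with the uniform Gaussian fluctuation bounds of Lemma~\ref{bessfluc}. First I would split the functional $I(y) = \frac12\int_0^\infty m''(u)(\bes y_u - y)\,\diffd u$ into a ``past'' part $I^{\le s}(y) = \frac12\int_0^s m''(u)(\bes y_u-y)\,\diffd u$ and a ``future'' part $I^{>s}(y) = \frac12\int_s^\infty m''(u)(\bes y_u-y)\,\diffd u$. The future part, conditioned on $\mathcal F_s = \sigma(\bes y_u, u\le s)$, depends only on the increments of the Bessel process after time $s$ started from the current position $\bes y_s$; since a Bessel-3 process is itself a Markov process, $\E[e^{I^{>s}(y)}\mid\mathcal F_s]$ is a measurable function of $\bes y_s$ alone, call it $g_s(\bes y_s)$, and the key point is that by Lemma~\ref{bessconv} and the coupling $\diffd B_u \le \diffd\bes z_u \le \diffd\bes0_u$ this function is \emph{almost constant}: $g_s(z) = g_s(\infty)(1 + \text{small})$ uniformly in $z$, with the ``small'' term controlled by $\int_s^\infty |m''(u)|\, \E[\,|\bes z_u - \bes0_u|\,]\,\diffd u$, which is $\mathcal O(1/s)$ since the difference of the two coupled Bessel processes is bounded and $\int_s^\infty |m''(u)|\,\diffd u = \mathcal O(1/s)$.

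Writing $e^{I(y)} = e^{I^{\le s}(y)} g_s(\bes y_s)$ and inserting $g_s(\bes y_s) = g_s(\infty) + \mathcal O(1/s)$ into the definition of $w$, the leading term factorises because $e^{I^{\le s}(y)}$ and $g_s(\infty)$ are deterministic/constant, so the covariance of $e^{I(y)}$ with $\bes y_s - y$ reduces, up to errors, to $g_s(\infty)\cdot\mathrm{Cov}(e^{I^{\le s}(y)}, \bes y_s - y)$ plus a term of size $\mathcal O(1/s)\cdot\E|\bes y_s - y| = \mathcal O(1/s)\cdot\mathcal O(\sqrt s\,(\log s)^{?})$ — here I use $\E|\bes y_s - y|\le \E[\bes 0_s]\le C\sqrt s$ from Lemma~\ref{bessfluc} (actually $\E[\bes 0_s] = c\sqrt s$ exactly). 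For the remaining covariance involving only $I^{\le s}(y)$, both factors are now ``past-measurable''; I would bound $|\mathrm{Cov}(e^{I^{\le s}(y)}, \bes y_s - y)| \le \E[e^{I^{\le s}(y)} |\bes y_s - y|] + \E[e^{I^{\le s}(y)}]\E|\bes y_s - y|$ using $K_1 \le \E[e^{I^{\le s}}]\le K_2$ (same argument as in Proposition~\ref{easybounds}), and then use Cauchy--Schwarz or Hölder against the Gaussian tail of $G$ from Lemma~\ref{bessfluc} to get $\E[e^{I^{\le s}(y)}|\bes y_s - y|]\le C\,\E[|\bes y_s-y|^2]^{1/2}\le C\sqrt s$, or more carefully just $\mathcal O(\log(s+1))$ by a slightly finer argument exploiting that $I^{\le s}$ and $\bes y_s - y$ are positively but weakly correlated. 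Combining, $|w(y,s)|\le C\log(s+1)$ for the first bound. For the second bound, with the $y$-dependence, I would instead use $\E[e^{I(y)}]\ge K_1 > 0$ and Lemma~\ref{bessconv}'s estimate $\bes y_s - y = B_s + 2\int_0^s \diffd u/\bes y_u$ with $\bes y_u \ge y + B_u$; the extra drift integral contributes $y\,\mathcal O(\log(s+1)/\sqrt s)$ after taking expectations against the reciprocal of a Bessel started at $y$ (which is of order $1/(y+\sqrt u)$), while the $B_s$ part has $w$-covariance of order $\mathcal O(1)$ since $B_s$ has variance $2s$ but its covariance with $e^{I(y)}$ picks up only the $\int|m''|\cdot\text{(covariance of increments)}$ contributions, which are summable.

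For the third, incremental bound, I would write $w(y,s+\delta) - w(y,s) = \E[e^{I(y)}(\bes y_{s+\delta} - \bes y_s)] - \E[e^{I(y)}]\,\E[\bes y_{s+\delta} - \bes y_s]$ and again condition at time $s$: the increment $\bes y_{s+\delta} - \bes y_s = (B_{s+\delta} - B_s) + 2\int_s^{s+\delta}\diffd u/\bes y_u$. The martingale part $B_{s+\delta} - B_s$ is independent of $\mathcal F_s$, and its covariance with $e^{I(y)}$ comes solely from the future-dependence $g_s$, giving a term bounded by $\E|B_{s+\delta}-B_s|\cdot\mathcal O(1/s) = \sqrt\delta\cdot\mathcal O(1/s)$, which is $\mathcal O(\delta/(s+1))$ in the range $\delta\le s^2$; the drift part is bounded deterministically by $2\int_s^{s+\delta}\diffd u/\bes y_u \le 2\int_s^{s+\delta}\diffd u/(B_u \text{-shifted Bessel})$, of expected order $\delta/\sqrt s$ times something, but its covariance contribution is again damped to $\mathcal O(\delta/(s+1))$ because, conditionally on $\mathcal F_s$, this drift is a small perturbation whose fluctuation is controlled by $\int_s^{s+\delta}|m''|$-type weights — here the constraint $\delta\le s^2$ is exactly what keeps the time-changed quantities from blowing up. I expect the \textbf{main obstacle} to be making the ``$g_s$ is almost constant'' step genuinely uniform in $y$ while simultaneously extracting the sharp $\log(s+1)/\sqrt s$ rate (rather than a cruder $1/\sqrt s$ or $\log s$) in the second bound; this requires carefully tracking how the reciprocal-Bessel drift term $\int \diffd u/\bes y_u$ decorrelates, and is the place where one must combine the explicit coupling $\bes y_u \ge y + B_u$ with the Gaussian-tail control on $\sup_u B_u / u^{1/2+\epsilon}$ from Lemma~\ref{bessfluc} rather than appealing to soft arguments.
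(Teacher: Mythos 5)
Your overall strategy (condition at time $s$ and exploit that the dependence of the exponential functional on the conditioning value is weak) is the same as the paper's, but the quantitative heart of the argument is missing, and two of your intermediate claims are wrong as stated. First, your ``$g_s$ is almost constant'' step: by the coupling of Lemma~\ref{beslem} the difference of two Bessel processes started from $z$ and $z'$ is bounded by $|z-z'|$, not by an absolute constant, so $\int_s^\infty |m''(u)|\,\E[|\bes z_u-\bes 0_u|]\,\diffd u=\mathcal O(z/s)$, not $\mathcal O(1/s)$ uniformly in $z$; the $z$-dependence cannot be discarded, it is exactly what must be tracked. Second, and more importantly, after factoring out the future you are left with $\mathrm{Cov}\big(e^{I^{\le s}(y)},\bes y_s-y\big)$, for which your triangle-inequality bound gives $C\sqrt s$ and the claimed improvement to $\mathcal O(\log(s+1))$ is merely asserted (``weakly correlated''). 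That improvement \emph{is} the proposition. The paper obtains it by writing $\E[e^{I(y)}\mid \bes y_s=z]=\E[e^{\tilde I_s(y,z)}]$ (bridge to $z$ on $[0,s]$ followed by an independent Bessel from $z$) and proving in Lemma~\ref{intdiff} that this quantity is Lipschitz in the endpoint with constant $c\log(s+1)/s$, via the comparison $|\cbes s y z_u-\cbes s y x_u|\le|z-x|\min(u/s,1)$; integrating the Lipschitz bound against $|z-\E[\bes y_s]|$ and the law of $\bes y_s$ produces $s\cdot\log(s+1)/s=\log(s+1)$. For the second, $y$-dependent bound the paper needs the genuinely harder estimate \eqref{hardIdif}, proved by an SDE comparison using $\coth(x)\le 1/x+x/3$ and an integration by parts; nothing in your sketch (the heuristic about $\int\diffd u/\bes y_u$ being of order $1/(y+\sqrt u)$, or the unsubstantiated claim that the $B_s$ part contributes $\mathcal O(1)$) substitutes for this.

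For the increment bound there is also an arithmetic error: $\sqrt\delta\cdot\mathcal O(1/s)$ is \emph{not} $\mathcal O(\delta/(s+1))$ in the range $0\le\delta\le s^2$ (take $\delta$ small, e.g.\ $\delta=1/s$). The correct mechanism, as in the paper, is that the conditional expectation $\mathcal E(x)=\E[e^{I(y)}\mid \bes y_{s+\delta}-\bes y_s=x]$ is Lipschitz in $x$ with constant $\mathcal O(1/(s+1))$ (because only $\int_s^\infty|m''|=\mathcal O(1/(s+1))$ weights the perturbed part of the path, via $|\cbes{\delta}{z}{z+x}_u-\cbes{\delta}{z}{z+\mu}_u|\le|x-\mu|$); the covariance then carries the \emph{square} of the increment, $\E[(X-\mu)^2]/(s+1)\approx\delta/(s+1)$, which is where the bound comes from. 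Your accounting keeps only one power of $|X-\mu|$ and then patches it with the invalid inequality above, so that part of the argument would fail even granting the earlier steps.
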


The proof of this result is quite involved. The first step is to prove
two fairly accurate estimates on the difference between two bridges with
different end points, the first of which is best when the starting point
$y$ is large and the second of which is more accurate when $y$ is small.

It is well-known that a Bessel process started from $y$ and conditioned to
be at position~$x$ at time~$t$ is equal in law to a Bessel bridge from $y$
to $x$ in time $t$ followed by an independent Bessel process started from
$x$ at time~$t$. We defined $\cbes tyx_s$ for $s\in[0,t]$ as a Bessel
bridge from $y$ to $x$ in a time~$t$. In this section, we extend the
definition of $\cbes tyx_s$ for $s>t$ by interpreting  it as an independent
Bessel started from $x$ at time $t$, so that $\cbes tyx_s, s\ge0$ is
a Bessel process conditioned to be at $x$ at time~$t$. We assume that the
Bessel processes attached to $\cbes tyx_s$ for $s\ge t$ are built for all
$x$ and $t$ with the same noise, so that we can compare them to each other.
In particular, we apply \eqref{beslem1} and
\eqref{bessfluc:1} to these Bessel processes.

Recall that $I(y)=\frac12\int_0^\infty\diffd u\,m''(u)\big(\bes y_u-y\big)$
and define
\begin{equation}\tilde I_t(y,z) = \frac12 \int_0^\infty\diffd u\,m''(u)\big(\cbes{t}{y}{z}_u-y\big).
\label{defItilde}
\end{equation}

\begin{lem}\label{intdiff}
If $m$ is twice continuously differentiable with $m''(t)=\mathcal O(1/t^2)$, then there exists a constant $c$ and random variables $G_t$
with distribution independent of $t$ and Gaussian
tails such that:
\begin{itemize}
\item
For any  $t$, $y$, $z$ and $x$,
\begin{equation}\label{easyJ}
|\tilde I_t(y,z)-\tilde I_t(y,x)|\le c|z-x|\frac{\log(t+1)}{t}.
\end{equation}
\item For any $t$, $y$ and $z$,
\begin{equation}\label{hardIdif}
\left| \tilde I_t(y,z) - \tilde I_t(y,0) \right|
 \leq \frac{z^2}{t^{3/2}}G_t + c\bigg(\frac{z}{t} + \frac{z^3}{t^2} +  
\frac{z^2 y}{t^2}\log(t+1)\bigg).
\end{equation}
\end{itemize}
\end{lem}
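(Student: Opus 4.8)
The plan is to exploit the coupling of all the Bessel bridges and Bessel processes to the single driving Brownian motion, so that the differences $\cbes tyz_u - \cbes tyx_u$ and $\cbes tyz_u - \cbes ty0_u$ can be controlled pathwise by the comparison inequalities of Lemma~\ref{beslem} (extended past time $t$ via the independent Bessel tails, as described just before the statement) together with the uniform Gaussian fluctuation bounds of Lemma~\ref{bessfluc}. Once such pathwise bounds on the difference of the integrands are in hand, I integrate against $|m''(u)| = \mathcal O(1/u^2)$ — splitting the $u$-integral at $u=t$ in the usual way, since for $u\le t$ the bridge behaviour dominates and for $u>t$ the process is a plain Bessel — and the stated estimates should fall out.

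For the first bound \eqref{easyJ}: from the right-hand inequality of \eqref{beslem2} (applied with the roles of $z$ and $x$ suitably ordered, using monotonicity in the endpoint) we get, for $u\le t$, $|\cbes tyz_u-\cbes tyx_u|\le |z-x|\,u/t$, and for $u>t$ the two processes are, by construction, two Bessel processes started at $z$ and $x$ at time $t$ driven by the same noise, so by \eqref{beslem1} their difference stays bounded by $|z-x|$. Hence
\begin{equation}
|\tilde I_t(y,z)-\tilde I_t(y,x)|
\le \frac12\int_0^t |m''(u)|\,\frac{|z-x|u}{t}\,\diffd u
+\frac12\int_t^\infty |m''(u)|\,|z-x|\,\diffd u,
\end{equation}
and using $|m''(u)|=\mathcal O(1/u^2)$ the first integral is $\mathcal O(|z-x|\log(t+1)/t)$ while the second is $\mathcal O(|z-x|/t)$, giving \eqref{easyJ}.

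For the harder bound \eqref{hardIdif} I would compare $\cbes tyz_u$ to $\cbes ty0_u$ directly. The left inequality of \eqref{beslem2} gives $0\le \cbes tyz_u-\cbes ty0_u$ for $u\le t$, but a crude upper bound of order $z$ is too lossy: the point of \eqref{hardIdif} is that the leading discrepancy is of order $z^2/t^{3/2}$, reflecting that a bridge pinned at $z$ versus at $0$ differs, in the bulk $u\sim t$, only at second order in $z/\sqrt t$ once one accounts for the $\coth$ nonlinearity in the drift \eqref{introcbes}. So I would set $\zeta_u:=\cbes tyz_u-\cbes ty0_u$, write the SDE satisfied by $\zeta_u$ by differencing \eqref{introcbes} (the $\coth$ terms and the $-\cbes{}{}{}/(t-u)$ terms), and estimate $\zeta_u$ via a Gronwall-type argument: the linear restoring part contributes the deterministic $z u/t$ skeleton, and the correction coming from the curvature of $\coth$ and from the $2/\cbes{}{}{}$ terms is, after inserting the fluctuation bound \eqref{bessfluc:2} and the lower bound $\cbes{}{}{}\ge\cbes t00$, controlled by a term with a Gaussian-tailed random coefficient $G_t$ of size $z^2/t^{3/2}$ plus lower-order deterministic pieces of size $z/t$, $z^3/t^2$, and (picking up the $y$-dependence through the bound on how large the bridge can be) $z^2 y\log(t+1)/t^2$. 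For $u>t$ the two objects are genuine Bessel processes from $z$ and $0$ under the same noise, whose difference is between $0$ and $z$ and in fact decreasing, contributing only $\mathcal O(z/t)$ after integrating $|m''|$ over $[t,\infty)$. Assembling: integrate the pathwise bound on $|\cbes tyz_u-y - (\cbes ty0_u-y)| = \zeta_u$ against $\tfrac12|m''(u)|$ over $[0,\infty)$, using $\int_0^t u|m''(u)|\,\diffd u=\mathcal O(\log(t+1))$ and $\int u^2|m''|$, $\int u^3|m''|$ type moments over the appropriate ranges, and the claimed coefficients $z^2/t^{3/2}$, $z/t$, $z^3/t^2$, $z^2y\log(t+1)/t^2$ emerge.

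The main obstacle is the curvature estimate in the middle of the argument: extracting the sharp $z^2/t^{3/2}$ scale (rather than a lossy $z/\sqrt t$) requires carefully expanding the $\coth$-drift difference in \eqref{introcbes} to second order in its argument, keeping track that the argument $x\cbes tyx_u/(2(t-u))$ blows up as $u\uparrow t$, and then running a Gronwall estimate with the singular coefficient $1/(t-u)$ — the time change $u=st/(t-s)$ of \eqref{timechange} should be used to tame this singularity and reduce everything to moment bounds on an ordinary Bessel process, at the cost of bookkeeping. Controlling the $y$-dependence is a secondary subtlety: it enters only through how high the bridge $\cbes tyz$ can wander (bounded above via $\cbes ty0 + (\text{const})$ and then by $y(t-u)/t$ plus fluctuations from \eqref{beslem2}), and one must check this feeds through to exactly the stated $z^2 y \log(t+1)/t^2$ term and no worse.
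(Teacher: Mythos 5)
Your treatment of \eqref{easyJ} is complete and is exactly the paper's argument: the coupling bound $|\cbes tyz_u-\cbes tyx_u|\le|z-x|\min(u/t,1)$ integrated against $|m''(u)|=\mathcal O(1/u^2)$, split at $u=t$. The problem is \eqref{hardIdif}, which is the real content of the lemma. What you offer there is a programme, not a proof: you write down the right objects ($\zeta_u=\cbes tyz_u-\cbes ty0_u$, the differenced drifts from \eqref{introcbes} and \eqref{introcbes0}, a second-order control of the $\coth$ term), but the decisive step --- actually extracting the $z^2t^{-3/2}$ scale with a Gaussian-tailed coefficient, plus the $z/t$, $z^3/t^2$ and $z^2y\log(t+1)/t^2$ remainders --- is precisely what you flag as ``the main obstacle'' and leave unexecuted. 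In particular you never exhibit the differential inequality for $\zeta_u$, never explain how a Gronwall estimate with the singular coefficient $1/(t-u)$ is closed up to $u=t$, and never verify that the proposed time change keeps the error at the stated size. As it stands the second bullet is unproven.

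Moreover, the difficulty you anticipate (the blow-up of the $\coth$ argument as $u\uparrow t$, to be ``tamed'' by \eqref{timechange} and Gronwall) is a red herring, and recognising this is the missing idea. The paper splits the integral at $t/2$: on $(t/2,\infty)$ the crude coupling bound $0\le\zeta_s\le z\min(s/t,1)$ already yields a contribution $z\,\mathcal O(1/t)$, which is within the allowed error, so no refined estimate near $s=t$ is ever needed and the singularity never enters. On $[0,t/2]$ one has $t-s\ge t/2$, and since both bridges are driven by the same noise $\tilde B_{t,\cdot}$, differencing the SDEs and using $\coth x\le \tfrac1x+\tfrac x3$ together with $\cbes ty0_s\le\cbes tyz_s\le\cbes ty0_s+zs/t$ (and dropping the helpful $-\zeta_s/(t-s)$ term) gives the pathwise bound $\diffd\zeta_s\le \tfrac{z^2\cbes tyz_s}{6(t-s)^2}\,\diffd s$; no Gronwall argument is required. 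One then integrates $\zeta_s/(1+s)^2$ by parts against $1/(s+1)$, bounds $\cbes ty0_s\le y+\cbes t00_s$, and controls $\int_0^{t/2}\cbes t00_s\,\diffd s/s\le 4G_t\sqrt t$ by scaling ($\cbes t00_{tu}=\sqrt t\,\tcbes100_u\le\sqrt t\,G_tu^{1/4}$); this is exactly where the $G_tz^2/t^{3/2}$, $z^3/t^2$ and $z^2y\log(t+1)/t^2$ terms in \eqref{hardIdif} come from, as in \eqref{keyIdiff}. Without the split at $t/2$, the explicit $\coth$ inequality, and this integration-by-parts/scaling step, your sketch does not obviously close at the claimed precision, and in any case it is not carried out.
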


\begin{proof}
Recall from \eqref{beslem1} and \eqref{beslem2} that $\big| \cbes t y z_s - \cbes t y x_s\big|
 \le |z-x|\,\min(s/t,1)$. Therefore
\begin{align}
\left| \tilde I_t(y,z) - \tilde I_t(y,x) \right| 
&\leq\frac12
\int_0^\infty \diffd s\, |m''(s)|\, \left|\cbes{t}{y}{z}_s - \cbes{t}{y}{x}_s\right|
\\ &\leq \frac12|z-x| \bigg(\int_0^t \diffd s\, |m''(s)| \frac{s}t
+ \int_t^\infty\diffd s\, |m''(s)|\bigg) .
\end{align}
The first integral is $\mathcal O\big(\frac{\log t}{t}\big)$ while the
second is a $\mathcal O(1/t)$. Their sum can be bounded by
$2c\log(t+1)/t$ for some $c$, which proves the simpler
bound~\eqref{easyJ}.

To prove \eqref{hardIdif} we consider $x=0$ and split the integral at
$t/2$ and $t$. For $s>t/2$, with the same simple bounds as above we have
\begin{equation}
\bigg|\int_{\frac t2}^\infty \diffd s\, m''(s)\, \left(\cbes{t}{y}{z}_s
- \cbes{t}{y}{0}_s\right)\bigg|
 \leq z\bigg(\int_{\frac t2}^t \diffd s\, |m''(s)| \frac{s}t
+ \int_t^\infty\diffd s\, |m''(s)|\bigg)=z\mathcal O\Big(\frac1t\Big) .
\label{notkeyIdiff}
\end{equation}
From $0$ to $t/2$, we claim that the following bound is true:
\begin{equation}\label{keyIdiff}
0\le \int_0^{\frac t2}\diffd s \frac{\cbes{t}{y}{z}_s - 
\cbes{t}{y}{0}_s}{(1+s)^2} \le \frac{z^3}{3t^2} + \frac{z^2}{t^{3/2}}G_t
+ \frac{2z^2 y}{3t^2}\log(t+1),
\end{equation}
for some non-negative $G_t$ with distribution
independent of $t$ and Gaussian tails. Then, as there exists some constant $c'$ such that
$|m''(s)|\le c'/(1+s)^2$, \eqref{notkeyIdiff} and \eqref{keyIdiff} give the
result~\eqref{hardIdif}. Therefore it only remains to prove
\eqref{keyIdiff}.

We use the bound $\coth(x)\leq 1/x + x/3$, together with the SDEs
\eqref{introcbes0} and \eqref{introcbes}. We already know from Lemma
\ref{beslem} that $\cbes t y 0_s \leq \cbes t y z_s \leq \cbes t y 0_s
+ zs/t$ for any $s\in[0,t]$. Therefore for any $s\in[0,t)$,
\begin{align}
\diffd\cbes t y z_s - \diffd\cbes t y 0_s &\leq \left(\frac{z}{t-s} \coth\frac{z\cbes t y z_s}{2(t-s)} - \frac{2}{\cbes t y 0_s}\right)\diffd s\\
&\leq \frac{z^2 \cbes t y z_s}{6(t-s)^2}\diffd s\\
&\leq \left(\frac{z^3 s}{6t(t-s)^2} + \frac{z^2 \cbes
t y 0_s}{6(t-s)^2}\right)\diffd s.
\end{align}

By integration by parts,
\begin{equation}\int_0^{\frac t2} \diffd s\, \frac{\cbes t y z_s}{(s+1)^2}
= \int_0^{\frac t2}\frac{1}{s+1}\diffd \cbes t y z_s - \frac{\cbes
t y z_{t/2}}{t/2+1} + y
.
\end{equation}
Using \eqref{beslem2}, the estimate on $\diffd\cbes t y z_s
- \diffd\cbes t y 0_s$
from above, and $t-s\ge t/2$ for $s\le t/2$, we get
\begin{align}
0\le\int_0^{\frac t2} \diffd s\, \frac{\cbes t y z_s-\cbes t y 0_s}
{(s+1)^2}
&\leq \int_0^{\frac t2}
\frac{1}{s+1} \diffd \cbes t y z_s - \int_0^{\frac t2} \frac{1}{s+1} \diffd
\cbes t y 0_s
\\
&\leq \int_0^{\frac t2} \diffd s\, \frac{z^3 s}{6t(t-s)^2(s+1)}
+ \int_0^{\frac t2}
\diffd s\, \frac{z^2 \cbes t y 0_s}{6(t-s)^2(s+1)}
\\&\leq \frac{2z^3}{3t^3}\int_0^{\frac t2} \diffd s\, \frac{s}{s+1}
+ \frac{2z^2}{3t^2}\int_0^{\frac t2} \diffd s\, \frac{\cbes t y 0_s}{s+1}
\\&\leq \frac{z^3}{3t^2}+ \frac{2z^2}{3t^2}\int_0^{\frac t2} \diffd s\,
\frac{y+\cbes t 0 0_s}{s+1}
\\&\leq \frac{z^3}{3t^2}+ \frac{2z^2y}{3t^2}\log(t+1)+
 \frac{2z^2y}{3t^2}\int_0^{\frac t2} \diffd s\, \frac{\cbes t 0 0_s}{s}.
\label{whole}
\end{align}
By the scaling property, we introduce another Bessel bridge
$\tcbes 100$ by setting $\cbes t00_{tu}=\sqrt t \tcbes100_u$.
By adapting Lemma~\ref{bessfluc} to the new Bessel bridge,
there exists a random variable $G_t$ with distribution independent of $t$ and Gaussian tails such that $\tcbes100_u\le G_t u^{\frac14}$. Hence
\begin{equation}
\int_0^{\frac t2}\diffd s\frac{\cbes t00_s}s
=\int_0^{\frac12}\diffd u\frac{\cbes t00_{tu}}u
\le \sqrt t\,G_t\int_0^{\frac12}\diffd u\,u^{-\frac34}\le4G_t\sqrt t.
\end{equation}
This bounds the last term in \eqref{whole} and establishes
\eqref{keyIdiff}, thereby completing the proof.
\end{proof}

Finally, given that we are using random variables with Gaussian tails, the
following trivial result is useful.

\begin{lem}\label{gausslem}
Suppose that $G$ is a random variable with Gaussian tails.
Then for any real number $a$ and any polynomial $P$,
\begin{equation}
\big|\E[P(G) e^{aG}]\big|<\infty.
\end{equation}
\end{lem}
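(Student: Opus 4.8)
The plan is to reduce the statement to an elementary tail estimate and then to invoke the Gaussian-tail hypothesis directly. Since $P$ is a polynomial of some degree $N$, there is a constant $C=C(P)$ with $|P(x)|\le C(1+|x|)^N$ for all $x\in\R$, and trivially $|e^{ax}|=e^{ax}\le e^{|a|\,|x|}$. Hence
\[
\big|\E[P(G)e^{aG}]\big|\le \E\big[|P(G)|\,e^{|a||G|}\big]\le C\,\E\big[(1+|G|)^N e^{|a||G|}\big],
\]
so it suffices to show that $\E\big[(1+|G|)^N e^{|a||G|}\big]<\infty$.

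To do this I would use the layer-cake representation. Set $g(z):=(1+z)^N e^{|a|z}$, which is continuous and strictly increasing on $[0,\infty)$ with $g(0)=1$, so that $(1+|G|)^N e^{|a||G|}=g(|G|)\ge 1$ almost surely. Then
\[
\E\big[g(|G|)\big]=1+\int_1^\infty \P\big(g(|G|)>t\big)\,\diffd t
=1+\int_0^\infty g'(z)\,\P\big(|G|>z\big)\,\diffd z,
\]
the second equality obtained by the substitution $t=g(z)$ (using that $g$ is a bijection from $[0,\infty)$ to $[1,\infty)$). Now apply the Gaussian-tail bound $\P(|G|>z)\le c_1 e^{-c_2 z^2}$ and note that $g'(z)=\big(N(1+z)^{N-1}+|a|(1+z)^N\big)e^{|a|z}$ is a polynomial in $z$ multiplied by $e^{|a|z}$. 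Therefore the integrand is bounded by a polynomial in $z$ times $e^{|a|z-c_2 z^2}$, which is integrable over $[0,\infty)$ because the quadratic term in the exponent dominates; this gives a finite bound and completes the proof.

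There is no real obstacle here: the result is elementary, and the only point requiring a line of care is the passage to the tail integral. If one prefers to avoid the change of variables, one can instead simply observe that for a suitable constant $C'=C'(N,a,c_2)$ one has $(1+z)^N e^{|a|z}\le C' e^{c_2 z^2/2}$ for all $z\ge 0$, whence $\E\big[(1+|G|)^N e^{|a||G|}\big]\le C'\,\E\big[e^{c_2|G|^2/2}\big]$, and the latter is finite because
\[
\E\big[e^{c_2|G|^2/2}\big]=\int_0^\infty \P\big(|G|^2>\tfrac{2}{c_2}\log t\big)\,\diffd t\le 1+\int_1^\infty c_1\, t^{-2}\,\diffd t<\infty,
\]
again using the Gaussian tail of $G$.
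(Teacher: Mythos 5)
Your proof is correct. Note that the paper itself offers no argument for this lemma --- it is stated as a ``trivial result'' and used without proof --- so there is nothing to compare against; your write-up simply supplies the missing elementary justification, and it does so validly: the reduction $|P(G)e^{aG}|\le C(1+|G|)^N e^{|a||G|}$ together with the Gaussian tail bound $\P(|G|>z)\le c_1e^{-c_2z^2}$ is exactly what is needed. Of your two variants, the second (dominating $(1+z)^Ne^{|a|z}$ by $C'e^{c_2z^2/2}$ and then bounding $\E[e^{c_2|G|^2/2}]$ by a tail integral) is the cleaner one, since the layer-cake/change-of-variables version tacitly needs $g$ to be strictly increasing, which degenerates in the trivial case $a=0$ with $P$ constant; the domination argument covers all cases uniformly, so there is no genuine gap.
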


We can now prove Proposition \ref{wlem}.

\begin{proof}[Proof of Proposition \ref{wlem}]
Recall the definition\eqref{defItilde} of $\tilde I$.
For any deterministic $x$, since
$\E\big[\bes{y}_s-\E\big[\bes{y}_s\big]\big] = 0$ and $\E\big[e^{\tilde
I_s(y,x)}\big]$ is deterministic, we have
\begin{align}
w(y,s) &=
\E\Big[e^{I(y)}\Big(\bes{y}_s-\E\big[\bes{y}_s\big]\Big)\Big]
\\ &=
\E\Big[\Big(e^{I(y)}-\E\big[e^{\tilde I_s(y,x)}\big]\Big)
	\Big(\bes{y}_s-\E\big[\bes{y}_s\big]\Big)\Big]
\\ &=
\int_0^\infty \Big(\E\big[e^{ I(y)}|\bes{y}_s=z\big]-
\E\big[e^{\tilde I_s(y,x)}\big]\Big)\Big(z-\E\big[\bes{y}_s\big]\Big)
\P\big(\bes{y}_s \in \diffd z\big)
\\&=
\int_0^\infty \E\Big[e^{ \tilde I_s(y,z)}-
e^{\tilde I_s(y,x)}\Big]\Big(z-\E\big[\bes{y}_s\big]\Big)
\P\big(\bes{y}_s \in \diffd z\big),
\end{align}
where we used  that $\E\big[e^{I(y)}\big|\bes y_s = z\big] = \E\big[ e^{\tilde
I_s(y,z)}\big]$. Then
\begin{equation}
\big|w(y,s)\big|
\le
\int_0^\infty \E\Big[\,\Big|e^{ \tilde I_s(y,z)}-
e^{\tilde I_s(y,x)}\Big|\,\Big]\times\Big|z-\E\big[\bes{y}_s\big]\Big|
\times
\P\big(\bes{y}_s \in \diffd z\big).
\end{equation}
By the mean value theorem, $|e^a-e^b|\le |a-b|e^{\max(a,b)}\le
|a-b|e^{b+|a-b|}$. Thus
\begin{align}
|w(y,s)| &\le
\int_0^\infty \E\bigg[e^{\tilde I_s(y,x)}\,\Big|\tilde
I_s(y,z)-\tilde I_s(y,x)\Big|\,e^{|\tilde I_s(y,z)-\tilde
I_s(y,x)|}\bigg]\;\Big|z - \E[\bes{y}_s]\Big|\times\P\big(\bes{y}_s \in \diffd
z\big)
\\&\le
\int_0^\infty \E\Big[e^{\tilde I_s(y,x)}\,\Big|\tilde
I_s(y,z)-\tilde I_s(y,x)\Big|\Big]\,e^{c|z-x|\frac{\log(s+1)}s}
\;\Big|z - \E[\bes{y}_s]\Big|\times\P\big(\bes{y}_s \in \diffd
z\big),
\end{align}
where we applied \eqref{easyJ} of Lemma~\ref{intdiff} in the exponential. Now,
by Cauchy-Schwarz,
\begin{equation}\label{kille1}
|w(y,s)| \le \E\Big[e^{2\tilde I_s(y,x)}\Big]^{\frac12}
\int_0^\infty \E\Big[\Big|\tilde I_s(y,z)-\tilde I_s(y,x)\Big|{}^2
\Big]^{\frac12}e^{c|z-x|\frac{\log(s+1)}s}\,\Big|z - \E[\bes{y}_s]\Big|\P(\bes{y}_s \in \diffd z).
\end{equation}
Decompose $\tilde I_s(y,x)$ in the following way:
\begin{multline}
2\tilde I_s(y,x)=\int_0^s\diffd u\,m''(u)\Big(\cbes
syx_u-y-(x-y)\frac us\Big)+\int_s^\infty\diffd u\,m''(u)\Big(\cbes
syx_u-x\Big)\\+\int_0^s\diffd u\,m''(u)(x-y)\frac us+\int_s^\infty\diffd
u\,m''(u)(x-y).
\end{multline}
The first integral is $2I_s(y,x)$. Using~\eqref{bessfluc:2} it can be
bounded uniformly in $y$, $x$ and $s$ by a variable with Gaussian tails.
The second integral, which does not depend on $y$,
can also be bounded uniformly in $x$ and $s$ using \eqref{bessfluc:1} by an
independent variable with Gaussian tails. The third integral is
$(x-y)\mathcal O\big(\frac{\log s}s\big)$ and the fourth
is $(x-y)\mathcal O\big(\frac{1}s\big)$; they can be bounded together
by $2c|x-y|\frac{\log (s+1)}s$ for some constant $c$. 
Finally, there exists a $C_1$ and a $c$ such that, uniformly in $s$, $y$
and $x$: 
\begin{equation} \E\big[e^{2\tilde I_s(y,x)}\big]^{\frac12}\le 
C_1 e^{c|x-y|\frac{\log (1+s)}s}.
\label{eJbdd}
\end{equation}
Substituting back into \eqref{kille1}, we get
\begin{align}
|w(y,s)|&\le C_1 e^{c|x-y|\frac{\log (1+s)}s}
\int_0^\infty \E\Big[\Big|\tilde I_s(y,z)-\tilde I_s(y,x)\Big|{}^2
\Big]^{\frac12}e^{c|z-x|\frac{\log (1+s)}s}\,\Big|z - \E[\bes{y}_s]\Big|\P(\bes{y}_s \in \diffd z).
\label{kille}
\end{align}

First we concentrate on showing the first line of
\eqref{interdeco}, i.e.~that $|w(y,s)|\le C\log(s+1)$. Using
\eqref{easyJ} again,
\begin{equation}\E[|\tilde I_s(y,z)-\tilde I_s(y,x)|^2]^{1/2} \le
c|z-x|\frac{\log(s+1)}{s},
\end{equation}
so we get, by choosing $x=\E\big[\bes{y}_s\big]$,
\begin{align}
|w(y,s)| &\le
C_1c\frac{\log(s+1)}s e^{c\big|\E[\bes y_s]-y\big|\frac{\log (1+s)}s}
\int_0^\infty 
e^{c\big|z-\E[\bes y_s]\big|\frac{\log (1+s)}s}\,\Big(z
- \E[\bes{y}_s]\Big)^2\P(\bes{y}_s \in \diffd z),\notag
\\&=
C_1c\frac{\log(s+1)}s e^{c\big|\E[\bes y_s]-y\big|\frac{\log (1+s)}s}
\E\Big[
e^{c\big|\bes y_s-\E[\bes y_s]\big|\frac{\log (1+s)}s}\,
\Big(\bes y_s -\E[\bes{y}_s]\Big){}^2\Big].
\end{align}

It remains to bound the expectations above.
Note from \eqref{beslem1} that for all $z\ge0$
we have $B_1 \le \bes z_1 - z \le \bes 0_1$ and therefore
\begin{equation}
B_1-\E[\bes 0_1] \le \bes z_1 -\E[\bes z_1]
                 \le \bes z_1 -z \le \bes 0_1,
\end{equation}
so, with $\Gamma$ the positive random variable with Gaussian tail
defined by
\begin{equation}
\Gamma:=\max\big\{\big|B_1-\E\big[\bes 0_1\big]\big|,\big|\bes
0_1\big|\big\},
\label{gamdef}
\end{equation}
we have, uniformly in $z$,
\begin{equation}
\big|\bes z_1 - \E\big[\bes z_1\big]\big|\le \Gamma,
\qquad
\big|\bes z_1 - z\big|\le \Gamma.
\label{gamuse}
\end{equation}
Therefore, by the scaling property,
\begin{equation}
|w(y,s)| \le
C_1c\frac{\log(s+1)}s e^{c\sqrt s\,\E[\Gamma]\frac{\log (1+s)}s}
\E\Big[ e^{c\sqrt s\,\Gamma \frac{\log (1+s)}s} s \Gamma^2\Big]
\le C \log(s+1),
\end{equation}
for some constant $C$, where we used Lemma~\ref{gausslem} to bound the
last expectation.
This is the first line of \eqref{interdeco}.

We now turn to showing the second line of \eqref{interdeco}, that
$|w(y,s)|\le C\big(1+y\frac{\log(s+1)}{\sqrt s}\big)$. Given that we have already
proven that $|w(y,s)|\le C\log(s+1)$, it suffices to consider $y\le
\sqrt s$.

Recall \eqref{hardIdif}:
\begin{equation}
\left| \tilde I_s(y,z) - \tilde I_s(y,0) \right|
 \leq \frac{z^2}{s^{3/2}}G_s + c\bigg(\frac{z}{s} + \frac{z^3}{s^2} +  
\frac{z^2 y}{s^2}\log(s+1)\bigg).
\end{equation}
By Cauchy-Schwarz, if $a,b\ge 0$ and $X$ is a non-negative random
variable with finite second moment, then
\begin{equation}\E[(aX+b)^2]^{1/2} \le a\E[X^2]^{1/2}+b.\end{equation}
This tells us that
\begin{equation}\label{killnoe}
\E[|\tilde I_s(y,z)-\tilde I_s(y,0)|^2]^{\frac12} \le C_2 a_{s,y,z}
\qquad\text{with }a_{s,y,z}=\frac{z}{s} +\frac{z^2}{s^{3/2}} +
\frac{z^3}{s^2} +  \frac{z^2 y}{s^2}\log(s+1)
\end{equation}
for some constant $C$ since the distribution of $G_s$ does not depend on $s$. 

Now choosing $x=0$ in \eqref{kille} and substituting \eqref{killnoe}, we get
\begin{align}
|w(y,s)|&\le C_1C_2 e^{cy\frac{\log (1+s)}s}
\int_0^\infty a_{s,y,z}e^{cz\frac{\log (1+s)}s}\,\Big|z - \E[\bes{y}_s]\Big|\P(\bes{y}_s \in \diffd z).
\\&\le C_3 \E\Big[a_{s,y,\bes y_s}\,e^{c\bes
y_s\frac{\log (1+s)}s}
	\Big|\bes y_s - \E[\bes{y}_s]\Big|\Big],
\end{align}
where we used $y\le\sqrt s$ to bound the factor in front of the integral by
a constant.
Using the scaling property, writing $\tilde\xi_1=\bes y_s/\sqrt s$ we have
as in \eqref{gamuse}
\begin{equation}
\big|\tilde\xi_1-\E[\tilde\xi_1]\big|\le\Gamma,\qquad
\big|\tilde\xi_1-y/\sqrt s\big|\le\Gamma,\qquad
\tilde\xi_1\le 1+\Gamma
\end{equation}
for some positive random variable $\Gamma$ with Gaussian tails;
we used $y\le\sqrt s$ in the last equation.
Then
\begin{align}
|w(y,s)|&
\le C_3 \E\Big[a_{s,y,\sqrt s(1+\Gamma)}
e^{c\sqrt s(1+\Gamma)\frac{\log (1+s)}s}
	\sqrt s\, \Gamma\Big],
\end{align}
but
\begin{equation}
a_{s,y,\sqrt s \,X}\sqrt s=X+X^2+X^3+ X^2y\frac {\log(s+1)}{\sqrt s},
\end{equation}
so using Lemma~\ref{gausslem} again we obtain $|w(y,s)|\le C\Big(1+y\frac {\log(s+1)}{\sqrt s}\Big)$ 
for some constant $C$, which is the second line
of \eqref{interdeco}.

Finally we turn to the last line of~\eqref{interdeco} and bound the
increments of  $w(y,s)$. Our approach is very similar to the above,
conditioning on the value of $\bes y_{s+\delta}-\bes y_s$ instead of $\bes
y_s$.

Let $X = \bes y_{s+\delta}-\bes y_s$ and $\mu = \E[X]$, and also define
\begin{equation}
\mathcal E(x) = \E\Big[e^{I(y)}\Big|X = x\Big].
\end{equation}
Directly from the definition \eqref{wdef} of $w$, since $\mathcal E(\mu)$ is
deterministic and $\E[X-\mu]=0$, we have
\begin{align}
w(y,s+\delta)-w(y,s) &= \E\Big[e^{I(y)}(X - \mu)\Big],\\
&= \E\Big[\Big(e^{I(y)}-\mathcal E(\mu)\Big)(X - \mu)\Big],\\
&= \int_{-\infty}^\infty \big(\mathcal E(x) - \mathcal E(\mu)\big) (x-\mu)\P(X\in\diffd x).\label{wtoE}
\end{align}
Applying the Markov property at time $s$, we have
\begin{align}\label{twoEs}
\mathcal E(x)-\mathcal E(\mu)
= \int_0^\infty& \P(\bes y_s\in \diffd z)
\E\Big[e^{\frac12\int_0^s \diffd u\, m''(u)\big(\cbes syz_u - y\big)}\Big]\notag \\
&\cdot\E\Big[e^{\frac12\int_0^\infty \diffd u\, m''(s+u)\big(\cbes \delta
z {z+x}_u - y\big)} - e^{\frac12\int_0^\infty \diffd u\, m''(s+u)\big(\cbes \delta
z {z+\mu}_u - y\big)}\Big].
\end{align}

We now use the simple bound
\begin{equation}
|\cbes{\delta}{z}{z+x}_u - \cbes{\delta}{z}{z+x'}_u|\le |x-x'| \quad \text{
for all } z,x,x',\delta,u\ge0,
\end{equation}
which follows from Lemma \ref{beslem} and implies that
\begin{equation}
\Big|\int_0^\infty \diffd u\, m''(s+u) (\cbes \delta z {z+x}_u - \cbes
\delta z {z+\mu}_u)\Big| \le \int_0^\infty\diffd u\,
|m''(s+u)||x-\mu| \le \frac{2c|x-\mu|}{s+1}
\end{equation}
for some constant~$c$.
This, together with the bound $|e^a-e^b|\le |a-b|e^{b+|a-b|}$ for any $a,b\in\R$, tells us that
\begin{multline}
\Big|e^{\frac12\int_0^\infty \diffd u\, m''(s+u)\big(\cbes \delta z {z+x}_u
- y\big)} - e^{\frac12\int_0^\infty \diffd u\, m''(s+u)\big(\cbes \delta
z {z+\mu}_u - y\big)}\Big|\\
\le e^{\frac12\int_0^\infty \diffd u\, m''(s+u)\big(\cbes \delta z {z+\mu}_u
- y\big)} \frac{c|x-\mu|}{s+1} e^{\frac{c|x-\mu|}{s+1}}.
\end{multline}
Substituting this into \eqref{twoEs}, we have
\begin{align}
|\mathcal E(x)-\mathcal E(\mu)| 
&\le \int_0^\infty \P(\bes y_s\in \diffd z)
\E\Big[e^{\frac12\int_0^s \diffd u\, m''(u)\big(\cbes syz_u - y\big)}\Big]\notag\\
&\hspace{15mm}\cdot\E\Big[e^{\frac12\int_0^\infty \diffd u\, m''(s+u)\big(\cbes
\delta z {z+\mu}_u - y\big)}\Big] \frac{c|x-\mu|}{s+1}
e^{\frac{c|x-\mu|}{s+1}}\\
&= \mathcal E(\mu)\frac{c|x-\mu|}{s+1} e^{\frac{c|x-\mu|}{s+1}}.
\end{align}
Returning to \eqref{wtoE}, we obtain
\begin{align}
|w(y,s+\delta)-w(y,s)| &\le \int_{-\infty}^\infty \mathcal
E(\mu)\frac{c|x-\mu|}{s+1} e^{\frac{c|x-\mu|}{s+1}}
|x-\mu|\P(X\in\diffd x)\\
&= \mathcal E(\mu)\E\bigg[\frac{c(X-\mu)^2}{s+1} e^{\frac{c|X -\mu|}{s+1}}\bigg].
\end{align}
Finally, by scaling, conditionally on $\bes y_s=z$ we have
\begin{equation}
|X-\mu| \overset{(d)}{=} \sqrt\delta \Big|\bes {z/\sqrt\delta}_1 -\E\Big[\bes
{z/\sqrt\delta}_1 \Big]\Big|
\le \sqrt\delta\,\Gamma,
\end{equation}
where $\Gamma$ was defined in \eqref{gamdef} and is a non-negative random variable with Gaussian tail. Therefore
\begin{equation}
\E\bigg[\frac{c|X-\mu|^2}{s+1} e^{\frac{c|X -\mu|}{s+1}}\bigg] \le
C\frac{\delta}{s+1}
\end{equation}
for some constant $C$ provided $\delta \le s^2$, and one may check
similarly to \eqref{eJbdd} that $\mathcal E(\mu)$ is also bounded uniformly
in $y$, $s$ and $\delta$.
This establishes the last line of \eqref{interdeco} and
completes the proof. \end{proof}

\subsection{Proof of Proposition~\ref{finer}}\label{prooffiner}
%%%%%%%%%%%%%%%%%%%%%%%%%
To prove Proposition \ref{finer} we proceed via three lemmas. We first write $I_t(y)=I(y)-\big(I(y)-I_t(y)\big)$, and show that the correction $I(y)-I_t(y)$ is small in the
following sense:
\begin{lem}\label{lemIIC} Suppose that $m$ is twice continuously
differentiable and satisfies \eqref{newmprop}. Then there exist positive random variables $G$ and $G_t$ with Gaussian
tails, where all the $G_t$ have the same distribution, such
that uniformly in $y$,
\begin{equation}
I(y) = G \mathcal O(1) \qquad \hbox{and} \qquad I(y)-I_t(y) = G_t
\mathcal O\big({t^{-\frac{1}2}}\big).
\end{equation}
\end{lem}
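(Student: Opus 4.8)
The plan is to bound the integrals defining $I(y)$ and $I(y)-I_t(y)$ pointwise along the Brownian trajectory using the Gaussian-tailed fluctuation bounds of Lemma~\ref{bessfluc}, exactly as in the proof of the first bound in Proposition~\ref{easybounds}, but keeping careful track of the $t$-dependence. First I would recall from \eqref{Iagain} that $I(y)=\frac12\int_0^\infty\diffd s\,m''(s)\big(\bes y_s-y\big)$. By Lemma~\ref{bessfluc} there is a random variable $G$ with Gaussian tails, independent of $y$, such that $|\bes y_s-y|\le G\max(s^{1/2-\epsilon},s^{1/2+\epsilon})$ uniformly in $s$ and $y$; since $m''(s)=\mathcal O(1/s^2)$ by \eqref{newmprop}, the integral $\int_0^\infty |m''(s)|\max(s^{1/2-\epsilon},s^{1/2+\epsilon})\diffd s$ converges (the integrand is $\mathcal O(s^{-3/2+\epsilon})$ at infinity and $\mathcal O(s^{1/2-\epsilon})$ near $0$), giving $|I(y)|\le G\,\mathcal O(1)$ uniformly in $y$.

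For the second bound I would use the time-change representation of $I_t(y)$ from \eqref{Iagain}, namely $I_t(y)=\frac12\int_0^t\diffd s\,m''(s)\frac{t-s}t\big(\bes y_{st/(t-s)}-y\big)$, and write the difference $I(y)-I_t(y)$ as a single integral. Substituting $u=st/(t-s)$ in the $I_t$ integral, or equivalently comparing integrands at matching times, the difference splits naturally into two pieces: a ``tail'' piece $\frac12\int_t^\infty \diffd s\,m''(s)(\bes y_s-y)$ coming from the part of $I(y)$ beyond time $t$, and a ``bulk'' piece measuring the discrepancy between $\frac{t-s}{t}(\bes y_{st/(t-s)}-y)$ and $(\bes y_s-y)$ for $s\in[0,t]$. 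The tail piece is bounded, using $|m''(s)|=\mathcal O(s^{-2})$ and $|\bes y_s-y|\le G\max(s^{1/2-\epsilon},s^{1/2+\epsilon})$, by $G\cdot\mathcal O\big(\int_t^\infty s^{-3/2+\epsilon}\diffd s\big)=G\,\mathcal O(t^{-1/2+\epsilon})$, which is of the claimed order up to an arbitrarily small loss in the exponent (and one absorbs the $\epsilon$ by taking it small, noting the statement only claims $\mathcal O(t^{-1/2})$ — here I would instead keep $\epsilon$ implicit or re-derive the tail bound slightly more carefully: since $m''(s)=\frac{3}{2(s+1)^2}+\mathcal O(s^{-2-\eta})$ one may in fact use $|\bes y_s-y|\le G_t\sqrt s$-type bounds obtained by the scaling $\bes y_s-y\overset{d}{=}\sqrt s(\bes{y/\sqrt s}_1-y/\sqrt s)$ together with \eqref{gamuse}, yielding $|\bes y_s - y|\le\sqrt s\,\Gamma$ with $\Gamma$ Gaussian-tailed, so that $\int_t^\infty|m''(s)|\sqrt s\,\Gamma\diffd s=\Gamma\,\mathcal O(t^{-1/2})$ cleanly).

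For the bulk piece, I would use Lemma~\ref{beslem}: from \eqref{beslem2} and the change of time \eqref{timechange}, $\cbes ty0_s=\frac{t-s}t\bes y_{st/(t-s)}$ satisfies $\cbes t00_s\le \cbes ty0_s\le \cbes t00_s+y\frac{t-s}t$, and $\bes y_s\ge y+B_s$, so one controls $\big|\frac{t-s}t(\bes y_{st/(t-s)}-y)-(\bes y_s-y)\big|$ by comparing both to the Brownian reference path; the time-lag $\frac{st}{t-s}-s=\frac{s^2}{t-s}$ and the multiplicative factor $\frac{t-s}{t}=1-\frac st$ each contribute a discrepancy of relative size $\mathcal O(s/t)$ over the relevant range, and after multiplying by $|m''(s)|=\mathcal O(s^{-2})$ and integrating, the $1/t$ gets pulled out and the remaining integral converges, giving $G_t\,\mathcal O(t^{-1})$ — better than needed. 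The cleanest packaging, and the one I would actually write, is to bound $|I(y)-I_t(y)|$ directly via $|I_t(y,z)-I_t(y,0)|$-type estimates already proved; indeed this is essentially what Lemma~\ref{intdiff} does, and one can quote \eqref{hardIdif}/\eqref{easyJ} with an appropriate choice of endpoints, or simply repeat the short computation. The main obstacle is purely bookkeeping: ensuring that after the time-change all error terms are genuinely uniform in $y$ and that the random prefactors retain Gaussian tails with a distribution not depending on $t$ — which follows from Brownian scaling plus \eqref{gamuse} and \eqref{bessfluc:1}, since every fluctuation bound used is of the form ``$\sqrt{s}$ times a fixed Gaussian-tailed variable'' — so no single step is deep, but the care is in not letting a $y$ or a $t$ leak into a constant.
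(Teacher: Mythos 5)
Your bound $I(y)=G\,\mathcal O(1)$ is exactly the paper's argument and is fine. The second bound, however, has two genuine gaps. First, your estimate of the ``bulk'' piece is wrong. Writing $\frac{t-s}t\big(\bes y_{st/(t-s)}-y\big)-\big(\bes y_s-y\big)=\frac{t-s}t\big(\bes y_{st/(t-s)}-\bes y_s\big)-\frac st\big(\bes y_s-y\big)$, the second term has absolute size of order $\frac st\sqrt s$ (you dropped the fluctuation scale $\sqrt s$ of $\bes y_s-y$ and kept only the ``relative size'' $s/t$); integrated against $|m''(s)|\asymp s^{-2}$ this gives order $t^{-1/2}$, not $t^{-1}$. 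Indeed this very term is the dominant contribution: in expectation it is essentially what produces the $3\sqrt\pi/\sqrt t$ of Proposition~\ref{finer2}, so a pathwise bound $G_t\,\mathcal O(t^{-1})$ is impossible. Second, your repair of the $\epsilon$-loss in the tail piece relies on a single Gaussian-tailed $\Gamma$ with $|\bes y_s-y|\le\sqrt s\,\Gamma$ for \emph{all} $s$; no such variable exists ($\sup_{s>0}(\bes y_s-y)/\sqrt s=\infty$ a.s.\ by the law of the iterated logarithm). The scaling identity together with \eqref{gamuse} gives a bound in distribution for each fixed $s$, not a uniform pathwise bound --- this is precisely why Lemma~\ref{bessfluc} carries the exponents $\tfrac12\pm\epsilon$. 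With the uniform bound \eqref{bessfluc:1} applied directly at times of order $t$, your tail and bulk estimates only yield $t^{-1/2+\epsilon}$ (or $t^{-1/2+2\epsilon}$ for the time-lag part), which misses the stated $\mathcal O(t^{-1/2})$ with a $G_t$ of fixed law. (Also, quoting Lemma~\ref{intdiff} does not directly apply: \eqref{easyJ} and \eqref{hardIdif} compare bridge functionals with two deterministic endpoints, whereas $I(y)-I_t(y)$ compares the free Bessel process with the bridge to $0$, the endpoint $\bes y_t$ being random.)

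The mechanism the paper uses, and which your write-up is missing, is the splitting $m''(s)=\frac3{2(s+1)^2}+r''(s)$ from \eqref{newmprop}. For the explicit part, the exact change of variable $u=st/(t-s)$ maps $[0,t)$ onto $[0,\infty)$, so no tail at infinity appears and the transformed kernel differs from $\frac3{4(u+1)^2}$ by $\mathcal O(1/t)$ uniformly in $u$, giving a $G\,\mathcal O(1/t)$ error; the leftover term $\frac3{4t}\int_0^t\frac{\diffd s}{s+1}\big(\bes y_{st/(t-s)}-y\big)$ is handled by first rescaling time by $t$ and applying Lemma~\ref{bessfluc} to the rescaled Bessel process, which produces a $t$-indexed variable $\tilde G_t$ whose law does not depend on $t$ and places the $\pm\epsilon$ at rescaled times of order one, so the bound is exactly $\tilde G_t\,\mathcal O(t^{-1/2})$ with no $\epsilon$-loss. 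Only the $r''$ part, with its extra decay $s^{-2-\eta}$, is allowed to meet the unrescaled fluctuation bounds and the time-lag estimate, and the conditions $\epsilon<\eta$, $\epsilon<\eta/2$ absorb the losses there. Without this split of $m''$ and the rescale-then-bound device, keeping the full $m''\asymp s^{-2}$ in both your tail and bulk pieces does not reach the claimed $G_t\,\mathcal O(t^{-1/2})$.
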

Unsurprisingly, for random variables with Gaussian tails we can make
series expansions rather easily:
\begin{lem}\label{lemdl}
Let $G$ and $G_t$ be positive random variables with Gaussian
tails such that  all the $G_t$ have the same distribution.
Suppose that $A_t$ and $B_t$ are random variables such that
\begin{equation}
A_t= G\mathcal O(1),\qquad
B_t= G_t\mathcal O( \epsilon_t)
\end{equation}
where $\epsilon_t\ge 0$ is a deterministic function with $\epsilon_t\to0$ as $t\to\infty$. 
Then for any integer $n\ge0$,
\begin{equation}
\E\big[e^{A_t+B_t}\big]=\sum_{p=0}^{n} \frac1{p!}\E\big[e^{A_t}B_t^p\big]
+\mathcal O(\epsilon_t^{n+1}).
\end{equation}
\end{lem}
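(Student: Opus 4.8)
The plan is to write $e^{A_t+B_t} = e^{A_t}e^{B_t}$ and Taylor-expand the second factor around $0$ with Lagrange remainder: for any integer $n\ge 0$ and any real $x$,
\begin{equation}
e^x = \sum_{p=0}^n \frac{x^p}{p!} + \frac{x^{n+1}}{(n+1)!}e^{\theta x}
\end{equation}
for some $\theta\in[0,1]$ depending on $x$ (with $e^{\theta x}\le 1\vee e^x\le 1+e^{|x|}$). Applying this pointwise with $x=B_t$ gives
\begin{equation}
e^{A_t+B_t} = \sum_{p=0}^n \frac{1}{p!}e^{A_t}B_t^p + R_t,\qquad
|R_t|\le \frac{1}{(n+1)!}e^{A_t}|B_t|^{n+1}\big(1+e^{|B_t|}\big).
\end{equation}
Taking expectations, the claimed identity follows provided $\E[|R_t|] = \mathcal O(\epsilon_t^{n+1})$, so the whole content is controlling this remainder term.

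To bound $\E[|R_t|]$, I would use the hypotheses $A_t = G\,\mathcal O(1)$ and $B_t = G_t\,\mathcal O(\epsilon_t)$: there are deterministic constants $c_1,c_2$ (independent of $t$) with $|A_t|\le c_1 G$ and $|B_t|\le c_2\epsilon_t G_t$ for all $t$. Hence
\begin{equation}
\E[|R_t|]\le \frac{(c_2\epsilon_t)^{n+1}}{(n+1)!}\,\E\Big[e^{c_1 G}\,G_t^{n+1}\big(1+e^{c_2\epsilon_t G_t}\big)\Big].
\end{equation}
Since $\epsilon_t\to 0$ we may assume $c_2\epsilon_t\le 1$ for all large $t$, so $e^{c_2\epsilon_t G_t}\le e^{G_t}$ and the bracketed expectation is dominated by $\E[e^{c_1 G} G_t^{n+1}(1+e^{G_t})]$. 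This last quantity does not depend on $t$: $G$ has Gaussian tails and all the $G_t$ have one fixed distribution with Gaussian tails, and although $G$ and $G_t$ need not be independent, Cauchy--Schwarz splits the expectation into $\E[e^{2c_1 G}]^{1/2}\,\E[G_t^{2(n+1)}(1+e^{G_t})^2]^{1/2}$, each factor finite and constant in $t$ by Lemma~\ref{gausslem}. Therefore $\E[|R_t|] = \mathcal O(\epsilon_t^{n+1})$, as required, and for the finitely many small $t$ the bound is absorbed into the constant.

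The only mild subtlety — and the thing to state carefully rather than a genuine obstacle — is that $G$ and $G_t$ are not assumed independent, so one cannot factor the expectation naively; Cauchy--Schwarz (or simply $ab\le \tfrac12 a^2+\tfrac12 b^2$ inside the exponent) handles this cleanly, and one should note that the terms $\E[e^{A_t}B_t^p]$ appearing in the sum are themselves finite by the same Gaussian-tail argument, so the statement is not vacuous. Everything else is the routine Taylor-with-remainder computation sketched above.
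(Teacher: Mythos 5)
Your proof is correct and follows essentially the same route as the paper: expand the exponential in $B_t$ and use $|A_t|\le c_1 G$, $|B_t|\le c_2\epsilon_t G_t$ together with the finiteness of exponential moments of Gaussian-tailed variables (Lemma~\ref{gausslem}) to bound the error by a constant times $\epsilon_t^{n+1}$. The only differences are cosmetic: you use a finite Taylor expansion with Lagrange remainder, whereas the paper expands the full series $e^{A_t}\sum_{p\ge 0}B_t^p/p!$, interchanges sum and expectation by dominated convergence, and bounds the tail $\sum_{p\ge n+1}$ by a geometric series; your Cauchy--Schwarz treatment of the possible dependence between $G$ and $G_t$ makes explicit a point the paper leaves implicit when asserting that $\E[e^{\alpha G+\beta G_t}]$ is finite.
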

Taking $n=1$, $\epsilon_t
=t^{-1/2}$, $A_t=I(y)$ and $B_t=-\big(I(y)-I_t(y)\big)$, we find
\begin{equation}
\E[e^{I_t(y)}] = \E[e^{I(y)}] - \E\Big[e^{I(y)}\big(I(y)-I_t(y)\big)\Big] +\mathcal
O\Big(\frac1t\Big).
\end{equation}
The difficult part is then to show how the $I(y)$ decorrelates asymptotically from $I(y)-I_t(y)$:
\begin{lem}
\label{maindeco}
Suppose that $m$ is twice continuously differentiable with $m''(t)
= \frac{3}{2(t+1)^2} + r''(t)$ where $r(t)=\mathcal O(t^{-2-\eta})$ for some $\eta>0$. Then
\begin{equation}
 \E\Big[e^{I(y)}       \big(I(y)-I_t(y)\big)\Big] 
=\E\Big[e^{I(y)}\Big]\E\big[I(y)-I_t(y)\big]
+ \mathcal O\Big(\frac{\log t}t\Big) + y\mathcal O\Big(\frac{1}{t}\Big).
\end{equation}
\end{lem}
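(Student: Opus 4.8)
The plan is to turn $I(y)-I_t(y)$ into a deterministic linear functional of the path $\big(\bes y_u - y\big)_{u\ge0}$, so that its decorrelation from $e^{I(y)}$ is controlled entirely by the function $w(y,\cdot)$ of Proposition~\ref{wlem}. Using the two expressions for $I(y)$ and $I_t(y)$ in \eqref{Iagain}, write
\begin{equation}
I(y)-I_t(y)=\frac12\int_0^t\diffd s\,m''(s)\Big[(\bes y_s-y)-\tfrac{t-s}t\big(\bes y_{st/(t-s)}-y\big)\Big]+\frac12\int_t^\infty\diffd s\,m''(s)\,(\bes y_s-y).
\end{equation}
Multiplying by $e^{I(y)}$, taking expectations and subtracting $\E[e^{I(y)}]\,\E[I(y)-I_t(y)]$, one may apply Fubini --- legitimate because $e^{I(y)}\le e^{CG}$ for a Gaussian-tail $G$ by Lemma~\ref{lemIIC}, because $|\bes y_u-y|$ and $\tfrac{t-s}t\,|\bes y_{st/(t-s)}-y|$ are both $\le G'\max(s^{1/2-\epsilon},s^{1/2+\epsilon})$ by Lemma~\ref{bessfluc} and \eqref{boundcbes}, and because $|m''(s)|\le C/(s+1)^2$ is integrable against this bound (using Cauchy--Schwarz and Lemma~\ref{gausslem}) --- to obtain, since $\E[e^{I(y)}(\bes y_u-y)]-\E[e^{I(y)}]\E[\bes y_u-y]=w(y,u)$ for every $u$,
\begin{equation}
\E\big[e^{I(y)}(I(y)-I_t(y))\big]-\E\big[e^{I(y)}\big]\E\big[I(y)-I_t(y)\big]=\frac12\int_0^t\diffd s\,m''(s)\Big[w(y,s)-\tfrac{t-s}t\,w\big(y,\tfrac{st}{t-s}\big)\Big]+\frac12\int_t^\infty\diffd s\,m''(s)\,w(y,s).
\end{equation}

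It then remains to bound these two integrals using the three estimates \eqref{interdeco}. For the tail integral, the bound $|w(y,s)|\le C(1+y\log(s+1)/\sqrt s)$ together with $|m''(s)|\le C/(s+1)^2$ gives $\mathcal O(1/t)+y\,\mathcal O(\log t/t^{3/2})$, which is within the claimed error. For the main integral I would split the integrand as
\begin{equation}
w(y,s)-\tfrac{t-s}t\,w\big(y,\tfrac{st}{t-s}\big)=\Big[w(y,s)-w\big(y,\tfrac{st}{t-s}\big)\Big]+\frac st\,w\big(y,\tfrac{st}{t-s}\big).
\end{equation}
For the first bracket, note $\tfrac{st}{t-s}=s+\delta$ with $\delta=\tfrac{s^2}{t-s}$, and $\delta\le s^2$ exactly when $s\le t-1$; on $[0,t-1]$ the increment bound of Proposition~\ref{wlem} gives $|w(y,s)-w(y,\tfrac{st}{t-s})|\le C\delta/(s+1)=Cs^2/\big((t-s)(s+1)\big)$, and integrating this against $|m''(s)|\le C/(s+1)^2$ leaves $\int_0^{t-1}\diffd s/\big((t-s)(s+1)\big)$, which by partial fractions is $\mathcal O(\log t/t)$; the leftover strip $[t-1,t]$ contributes only $\mathcal O(1/t^2)+y\,\mathcal O(\log t/t^{5/2})$ by crude bounds on $w$, using that $\tfrac{st}{t-s}\to\infty$ there so that the second moment of $w$ remains $\mathcal O(1)+y\,\mathcal O(\log t/t)$. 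For the second piece $\tfrac st\,w(y,\tfrac{st}{t-s})$, bounding $|w|$ by $C(1+y\log(\sigma+1)/\sqrt\sigma)$ with $\sigma=\tfrac{st}{t-s}\ge s$ and integrating against $|m''(s)|\le C/(s+1)^2$ yields $\mathcal O(\log t/t)$ from the constant part (since $\int_0^t s\,\diffd s/\big(t(s+1)^2\big)=\mathcal O(\log t/t)$) and, after splitting at $t/2$ and estimating $\log(\sigma+1)$ and $1/\sqrt\sigma$ separately on each piece, $y\,\mathcal O(1/t)$ from the $y$-part. Collecting all contributions gives $\mathcal O(\log t/t)+y\,\mathcal O(1/t)$.

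The real content of this lemma is already contained in Proposition~\ref{wlem}: once its three bounds on $w$ are available, what remains is bookkeeping over the two integrals. The only genuinely delicate region is $s$ near $t$, where the time change $s\mapsto \tfrac{st}{t-s}$ blows up; there one must verify both that the increment bound is unavailable only on the negligible strip $(t-1,t]$ (where $\delta=s^2/(t-s)$ exceeds $s^2$) and that, on that strip, $|w(y,\tfrac{st}{t-s})|$ is still harmless because its argument is of order $t^2$. I do not anticipate any obstacle beyond this.
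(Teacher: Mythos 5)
Your proposal is correct and follows essentially the same route as the paper: both reduce the covariance, via the coupling in \eqref{Iagain} and Fubini, to integrals of $m''(s)$ against the function $w(y,\cdot)$ of Proposition~\ref{wlem}, and then apply its three bounds, with the increment bound used for the time change $s\mapsto st/(t-s)$ away from $s=t$. The only differences are bookkeeping (you cut the increment piece at $s=t-1$ and keep the argument $st/(t-s)$ in the residual term, while the paper cuts at $t/2$ and uses $\tfrac st w(y,s)$), and your estimates do yield the stated $\mathcal O(\log t/t)+y\,\mathcal O(1/t)$.
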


Of course~$\psi_\infty(y) = \E[e^{I(y)}]$ and $\psi_t(y) = \E[e^{I_t(y)}]$,
so these lemmas together give Proposition~\ref{finer}. It remains to prove
the lemmas.

\begin{proof}[Proof of Lemma~\ref{lemIIC}]
The bound on $I(y)$ is easy by applying Lemma~\ref{bessfluc} since
$|m''(s)|\le \frac{c}{(1+s)^2}$ for all $s$ and some constant $c$. We now
turn to $I_t(y)-I(y)$.

Recall the expression~\eqref{Iagain} of $I_t(y)$, replace $m''(s)$ by its expression \eqref{newmprop} and cut the integral into three pieces to obtain
\begin{align}
I_t(y)& = \frac12 \int_0^t \diffd s\,\,
m''(s)\frac{t-s}{t}\Big(\bes{y}_{\frac{st}{t-s}}-y\Big),\\ &=
\frac{3}{4}\,\frac{t+1}{t}\int_0^t \frac{\diffd
s}{(s+1)^2}\Big(\bes{y}_{\frac{st}{t-s}}-y\Big) - \frac{3}{4t}\int_0^t
\frac{\diffd s}{s+1}\Big(\bes{y}_{\frac{st}{t-s}}-y\Big)+
\frac12\int_0^t\diffd s\, r''(s) \frac{t-s}t \Big( \bes y_{ \frac{st}{ t-s}}-y\Big).\label{cutIt}
\end{align}
Recall that, by scaling,
\begin{equation}
\bes y_{tu} = \sqrt t\,\tbes {\tilde y}_u \quad\text{ with } \quad \tilde
y=y/\sqrt t
\label{scalebes1}
\end{equation}
where $\tbes{\tilde y}_u$ is another, $t$ dependent (implicit in notation),
Bessel process started from $\tilde y$.
We can apply Lemma~\ref{bessfluc} to the Bessel process
$\tbes{\tilde y}_u$ but, as it depends on $t$, the random variable $G$ must
be replaced by some other random variable $\tilde G_t$ which has the same
Gaussian tails as $G$.
Then
\begin{equation}
\big|\tbes {\tilde y}_{u}- {\tilde y}\big|\le \tilde G_t
\max\Big(u^{\frac12-\epsilon},u^{\frac12+\epsilon}\Big)
\quad\text{so}
\quad
\big|\bes {y}_{tu}- {y}\big|\le \tilde G_t \sqrt t
\max\Big(u^{\frac12-\epsilon},u^{\frac12+\epsilon}\Big).
\label{scaledlem}
\end{equation}

In the second integral of \eqref{cutIt}, make the change of variable
$u=s/t$ and use~\eqref{scaledlem} to obtain
\begin{equation}
\left|\frac{1}{t} \int_{0}^t\frac{\diffd s}{s+1} \Big[ \bes y_{ \frac{st}{
t-s}}-y\Big]\right|\le \frac{1}t\int_{0}^1\frac{\diffd u}{u} \tilde
G_t \sqrt t
\max
\bigg\{
\Big(\frac u{1-u}\Big)^{\frac12+\epsilon}
,
\Big(\frac u{1-u}\Big)^{\frac12-\epsilon}
\bigg\}
=\tilde G_t \mathcal O \big(t^{-\frac12}\big).
\label{cutIt2}
\end{equation}

In the first integral of~\eqref{cutIt},
make the change of variable $u=st/(t-s)$ to obtain
\begin{equation}
I_t(y)=
\frac34\,\frac{t+1}{t} \int_{0}^\infty\frac{\diffd u}{(u+1+u/t)^2}\big( \bes y_u-y\big)
+\frac12\int_0^t\diffd s\, r''(s) \frac{t-s}t\Big( \bes y_{ \frac{st}{ t-s}}-y\Big)
+\tilde G_t \mathcal O \big(t^{-\frac12}\big).
\label{newIt}
\end{equation}

We now turn to $I(y)$. In expression~\eqref{Iagain} of $I(y)$, use the expression \eqref{newmprop} and cut the integral into the following pieces:
\begin{multline}\label{newI}
I(y)
=
\frac{3}{4} \int_0^\infty\frac{\diffd s}{(s+1)^2}\big( \bes y_s-y\big)
+\frac12\int_0^t\diffd s\, r''(s) \frac{t-s}t\big( \bes y_s-y\big)
\\
+\frac12\int_0^t\diffd s\, r''(s) \frac st\big( \bes y_s-y\big)
+\frac12\int_t^\infty\diffd s\, r''(s) \big( \bes y_s-y\big).
\end{multline}
Applying Lemma \ref{bessfluc} and the fact that $r''(s)$ is bounded (since it is continuous on $[0,\infty)$ and tends to $0$) with
$r''(s)=\mathcal O(s^{-2-\eta})$ for some $\eta>0$, it is easy to check that
the third and fourth integrals are bounded in modulus by $G \mathcal
O(t^{-1/2})$ if $\epsilon<\eta$. Using Lemma \ref{bessfluc} again, it is
also easy to check that the first terms in \eqref{newIt} and \eqref{newI}
are equal up to an error of size $G\mathcal O(1/t)$ which we absorb in the
$G\mathcal O(t^{-1/2})$ that we already have. Thus we get
\begin{equation}
I_t(y)-I(y) = \frac12\int_0^t\diffd s \, r''(s) \frac{t-s}t
\Big(\bes y_{ \frac{st}{ t-s}}-\bes y_s\Big)
+\tilde G_t \mathcal O\big(t^{-\frac12}\big)
+G \mathcal O\big(t^{-\frac12}\big).
\end{equation}
We now focus on the remaining integral. The difference
$\bes y_{ st/( t-s)}-\bes y_s$ is the position at time
$s^2/(t-s)=st/(t-s)-s$ of a new
Bessel process started from $\bes y_s$. It is also, by scaling, equal to
$t^{-1/2}$ times the position at time $ts^2/(t-s)$ of another Bessel process
started from $\sqrt t \bes y_s$. Applying Lemma~\ref{bessfluc} again to
this last Bessel process, we get
\begin{equation}
\Big|\bes y_{ \frac{st}{ t-s}}-\bes y_s\Big| \le \frac{\hat G_t}{\sqrt
t}
\max\bigg\{
\Big(\frac{t s^2}{ t-s}\Big)^{\frac12+\epsilon},
\Big(\frac{t s^2}{ t-s}\Big)^{\frac12-\epsilon}
\bigg\}
\le\frac{\hat G_t}{\sqrt t}\times
\begin{cases}
\displaystyle \frac t{t-s}s^{1+2\epsilon}&\text{if $1<s<t$},\\[2ex]
\displaystyle \frac t{t-1}&\text{if $0<s<1$}.
\end{cases}
\end{equation}
where $\hat G_t$ is another $t$-dependent positive random variable with the
same Gaussian tail as $G$.
Since $r''(s)$ is bounded and $r''(s)=\mathcal O(s^{-2-\eta})$,
the integral
$\int_1^\infty\diffd s\, r''(s)s^{1+2\epsilon}$ is finite provided
$\epsilon<\eta/2$, and we obtain
\begin{equation}
I_t(y)-I(y) = 
G_t \mathcal O\big(t^{-\frac12}\big),
\end{equation}
with $G_t=\max(G,\tilde G_t,\hat G_t)$. This concludes the proof.
\end{proof}

\begin{proof}[Proof of Lemma~\ref{lemdl}]
With the hypothesis of the lemma, write $|A_t|\le \alpha G$ and $|B_t|\le
\beta\epsilon_t G_t$ for some $\alpha>0$ and $\beta>0$. Writing
\begin{equation}
e^{A_t+B_t}=\sum_{p=0}^\infty \frac1{p!}e^{A_t}B_t^p,
\end{equation}
we can apply dominated convergence---since the partial sums are dominated by $\exp(A_t+|B_t|)$ which has finite expectation---and obtain
\begin{equation}
\E\big[e^{A_t+B_t}\big]=\sum_{p=0}^\infty
\frac1{p!}\E\big[e^{A_t}B_t^p\big].
\end{equation}
It only remains to show that the sum for $p\ge n+1$ is $\mathcal
O(\epsilon_t^{n+1})$. To do this observe that
\begin{equation}
\Big|\frac1{p!}\E\big[e^{A_t}B_t^p\big]\Big|\le
\frac{\epsilon_t^p}{p!}\E\big[e^{\alpha G}(\beta G_t)^p\big]
\le
\epsilon_t^p\E\big[e^{\alpha G+\beta G_t}\big],
\end{equation}
where the last expectation is finite. Then,
as soon as $\epsilon_t<1$, we have
\begin{equation}
\Big|\sum_{p=n+1}^\infty \frac1{p!}\E\big[e^{A_t}B_t^p\big]\Big|
\le \frac{\epsilon_t^{n+1}}{1-\epsilon_t}\E\big[e^{\alpha G+\beta
G_t}\big],
\end{equation}
which concludes the proof.
\end{proof}

\begin{proof}[Proof of Lemma~\ref{maindeco}]
Define
\begin{equation}
J_t(y)
= 2\E\Big[e^{I(y)}\big(I(y)-I_t(y)\big)\Big]
-2\E\Big[e^{I(y)}\Big]\E\big[I(y)-I_t(y)\big].
\end{equation}
We want to show that $J_t(y) = \mathcal O\big(\frac{\log t}{t}\big)+y\mathcal
O\big(\frac1t\big)$. Clearly,
\begin{align}
J_t(y) &= 2\Big(\E[e^{I(y)}I(y)] - \E[e^{I(y)}]\E[I(y)]\Big) 
- 2\Big(\E[e^{ I(y)}I_t(y)] - \E[e^{ I(y)}]\E[I_t(y)]\Big)\\
&= \int_0^\infty\diffd s\,m''(s) \Big(\E\Big[e^{I(y)}\big(\bes
y _s-y\big)\Big]- \E\Big[e^{I(y)}\Big]\E\Big[\bes y _s-y\Big]\Big)\notag\\
&\hspace{20mm} - \int_0^t\diffd s\,m''(s)
\frac{t-s}t\Big(\E\Big[e^{I(y)}\Big(\bes
y _{\frac{ts}{t-s}}-y\Big)\Big]-\E\Big[e^{I(y)}\Big]\E\Big[\bes
y _{\frac{ts}{t-s}}-y\Big]\Big)\\
&= \int_0^\infty\diffd s\,m''(s) w(y,s) - \int_0^t\diffd s\,m''(s)
\frac{t-s}{t}w\Big(y,\frac{ts}{t-s}\Big),
\end{align}
where we recall the definition of $w$ from \eqref{wdef}. We now apply
Proposition \ref{wlem}.
Cut the integrals at $t/2$ and rearrange the terms:
\begin{multline}
J_t(y) =\int_{\frac t2}^{\infty}\diffd s\, m''(s)w(y,s)
+\int_0^{\frac t2}\diffd s\,m''(s)\frac st  w(y,s)
-\int_{\frac t2}^t \diffd s\, m''(s) \frac{t-s}tw\Big(y,\frac{ts}{t-s}\Big)
\\
 -\int_0^{\frac t2}\diffd s\, m''(s) \frac{t-s}{t} \Big(
w\Big(y,\frac{ts}{t-s}\Big)-w(y,s)\Big).
\end{multline}
Using from Proposition \ref{wlem} that $|w(y,s)|\le C\log(s+1)$ and of course $m''(s)=\mathcal O(1/s^2)$, the first
and third integrals are both $\mathcal O(\frac{\log t}{t})$, uniformly in $y$.
Now using from Proposition \ref{wlem} that $|w(y,s)|\le
C\big(1+y\frac{\log(s+1)}{\sqrt s}\big)$, the second integral is $y\mathcal
O\big(\frac1t\big) + \mathcal O(\frac{\log t}{t})$.

We now turn to the fourth integral.
Writing $\frac{st}{t-s}=s+\frac{s^2}{t-s}$ and noticing that for $s<t/2$
we have $\frac{s^2}{t-s}<s^2$ as soon as $t\ge2$, the last part of Proposition \ref{wlem} gives
$\big|w\big(y,\frac{ts}{t-s}\big)-w(y,s)\big|\le C\frac
s{t-s}$, and therefore the fourth integral is $\mathcal O\big(\frac{\log
t}t\big)$, which concludes the proof.
\end{proof}

\subsection{Proof of Proposition~\ref{finer2}}\label{prooffiner2}

For $y\ge0$ we introduce the notation $\mu(y,t) := \E[\bes y_t]-y$ and observe that 
\begin{equation} \label{prop of mu}
\mu(y,tu) = \sqrt{t} \mu(\frac{y}{\sqrt{t}} ,u)
, \quad
\mu(0,s) = \frac4{\sqrt\pi} \sqrt{s}
, \quad
\max\big[0,\mu(0,s)-y\Big] \le\mu(y,s) \le \mu(0,s)
.
\end{equation}
(The first equality is the scaling property, and the inequalities are from \eqref{beslem1}. The second equality can be calculated directly from the probability density function for a Bessel process; see for example \cite[page 446]{Revuz1999}.)

With this notation we can rewrite 
\begin{align} 
\label{I with mu}
\E[I(y)] &= \frac12\int_0^\infty \diffd s\, m''(s) \mu(y, s)  \\
\label{I_t with mu}
\E[I_t(y)] &= \frac12\int_0^t \diffd s\, m''(s) \frac{t-s}{t}
	\mu\Big(y, \frac{st}{ t-s}\Big). 
\end{align}

As usual we use the expression \eqref{newmprop}, decomposing $\E\big[I(y)-I_t(y)\big]$ into terms containing $3/2(s+1)^2$ and terms containing $r''(s)$. In the former we make our usual change of time $u = st/(t-s)$, but in the latter we do not.
\begin{multline}
\E\big[I(y)-I_t(y)\big] = \frac34 \int_0^\infty \diffd s\,\frac{1}{(s+1)^2}\mu (y,s) - \frac34 \int_0^\infty \diffd u\,\frac{1}{(\frac{tu}{t+u}+1)^2}\Big(\frac{t}{t+u}\Big)^3 \mu(y,u)\\
+\frac12 \int_0^\infty \diffd s\, r''(s) \mu(y,s) - \frac12 \int_0^t \diffd s\,r''(s)\,\frac{t-s}{t}\,\mu\Big(y,\frac{st}{t-s}\Big).
\end{multline}
Rearranging we get
\begin{equation}\label{decomposition de I-I_t}
\begin{aligned}
\E\big[I(y)-I_t(y)\big] = &\frac 34\int_0^\infty \diffd s \,\left(1 -\frac{t}{t+s} \right)\frac{1}{(s+1)^2}\,\mu(y, s)\\
&+\frac 34\int_0^\infty \diffd s \,\left(\frac{1}{(s+1)^2} - \frac{1}{(s+1+s/t)^2}\right)\frac{t}{t+s}\,\mu(y,s)\\
&+ \frac12\int_0^t \diffd s  \, r''(s) \left(   \mu(y, s) - \frac {t-s}t \,\mu\Big(y, \frac{st}{t-s}\Big)  \right) +\frac12\int_t^\infty\diffd s\, r''(s) \mu(y, s),
\end{aligned}
\end{equation}
and we treat each of the four integrals on the right-hand side in turn.

\subsubsection*{The first integral in the right hand side of \eqref{decomposition de I-I_t}}

Making the change of variable $s=tu$ and using the first part of \eqref{prop of mu} we have
\begin{equation}
\int_0^\infty \diffd s \,\left(1 -\frac{t}{t+s} \right)\frac{1}{(s+1)^2}\,\mu(y, s) =  \frac1{\sqrt t} \int_{0}^\infty \diffd u\, \frac{u}{(u+1)(u+1/t)^2}  \mu\Big(\frac y{\sqrt{t}} ,u\Big).
\end{equation}
We now approximate $\mu(y/\sqrt t,u)$ by $\mu(0,u)$, bounding the error by using the last part of~\eqref{prop of mu}:
\begin{multline}
\left| \frac1{\sqrt t} \int_{0}^\infty \diffd u\, \frac{u}{(u+1)(u+1/t)^2}  \mu\Big(\frac y{\sqrt{t}} ,u\Big) -  \frac1{\sqrt t} \int_{0}^\infty \diffd u\, \frac{u}{(u+1)(u+1/t)^2}  \mu(0 ,u)\right|\\
\le  \frac1{\sqrt t} \int_{0}^\infty \diffd u\, \frac{u}{(u+1)(u+1/t)^2} \frac y{\sqrt{t}}.
\end{multline}
The right hand side is $y\mathcal O\big(\frac{\log t}{t}\big)$, and using the second part of \eqref{prop of mu}, we have
\begin{equation}
\frac1{\sqrt t} \int_{0}^\infty \diffd u\, \frac{u}{(u+1)(u+1/t)^2}  \mu(0 ,u) = \frac{4\sqrt\pi}{\sqrt t} + \mathcal O(1/t).
\end{equation}
We therefore conclude that
\begin{equation}
\int_0^\infty \diffd s \,\left(1 -\frac{t}{t+s} \right)\frac{1}{(s+1)^2}\,\mu(y, s) = \frac{4\sqrt\pi}{\sqrt t} + y\mathcal O\Big(\frac{\log t}{t}\Big) + \mathcal O\Big(\frac{1}{t}\Big).
\end{equation}

\subsubsection*{The second integral in the right hand side of \eqref{decomposition de I-I_t}}

We note that
\begin{equation}
\frac{1}{(s+1)^2} - \frac{1}{(s+1+s/t)^2} = \frac{1}{(s+1)^2}\mathcal O\Big(\frac1t\Big),
\end{equation}
and $t/(t+s)\le 1$, so using the bound $\mu(y,s)\le \mu(y,0) = 4\sqrt s/\sqrt \pi$ from \eqref{prop of mu}, we easily see that the second integral is $\mathcal O(1/t)$ uniformly in $y$.

\subsubsection*{The third integral in the right hand side of \eqref{decomposition de I-I_t}}
We use the following result: for any $\delta>0$,
\begin{equation}
0\le\mu(y,s+\delta)-\mu(y,s) \le \mu(0,s+\delta)-\mu(0,s)
= \frac4{\sqrt\pi}\Big(\sqrt{s+\delta}-\sqrt s\Big).
\end{equation}
This follows from the Markov property plus \eqref{prop of mu}. Then
\begin{equation}
\frac {t-s}t \mu\Big(y, \frac{st}{t-s}\Big) -\mu(y,s)
=\frac {t-s}t \Big[\mu\Big(y, \frac{st}{t-s}\Big)-\mu(y,s) \Big]-\frac st
\mu(y,s),
\end{equation}
so that
\begin{equation}
-\frac4{\sqrt\pi}\frac{s^{3/2}}t
\le\frac {t-s}t \mu\Big(y, \frac{st}{t-s}\Big) -\mu(y,s)\le
\frac4{\sqrt\pi}\frac{t-s}t\Big[\Big(\frac{st}{t-s}\Big)^{1/2}- s^{1/2}\Big]
\end{equation}
But $(\frac{st}{t-s})^{1/2}= s^{1/2} (1+\frac s{t-s})^{1/2} \le s^{1/2}\big(1+\frac
s{2(t-s)}\big)$ so the right hand side of the previous equation is at most $(4/\sqrt\pi)\times s^{3/2}/(2t)$. We conclude that
\begin{equation}
\left|\int_0^t \diffd s  \, r''(s) \left[   \mu(y, s)
- \frac {t-s}t \mu\left(y, \frac{st}{t-s}\right)  \right]\right|\le 
\frac4{\sqrt\pi t}
\int_0^t\diffd s\, s^{3/2}\big|r''(s)\big|
=\begin{cases}
\displaystyle\mathcal O\left(\frac1t\right) &\text{if $\eta>\frac12$},\\[2ex]
\displaystyle\mathcal O\left(\frac{\log t}t\right) &\text{if
$\eta=\frac12$},\\[2ex]
\displaystyle\mathcal O\left(\frac1{t^{\frac12+\eta}}\right) &\text{if
$\eta<\frac12$},
\end{cases}
\end{equation}
uniformly in $y$.

\subsubsection*{The fourth integral in the right hand side of \eqref{decomposition de I-I_t}}

Since $r''(s) = O(s^{-2-\eta})$ for some $\eta>0$, using \eqref{prop of mu} again we have
\begin{equation}
\left|\int_t^\infty\diffd s\, r''(s) \mu(y, s)\right|
\le \frac4{\sqrt\pi}\int_t^{\infty}\diffd s\,|r''(s)|\sqrt s = 
 \mathcal O\left(\frac1{t^{\frac12+\eta}}\right)
\end{equation}
uniformly in $y$.

\bigskip

Putting together the results from the four integrals give the proposition.

\section*{Appendix}

\begin{lem}\label{bessfluceasy}
For any $\epsilon>0$, the non-negative random variable
\begin{equation}
G := \sup_{s>0} \frac{\bes 0_{s}}{\max\big(s^{1/2-\epsilon},
s^{1/2+\epsilon}\big)}
\label{gausstails}
\end{equation}
has Gaussian tail under $\P$.
\end{lem}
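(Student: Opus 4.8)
The plan is to use the time-inversion invariance of the three-dimensional Bessel process to reduce the two-sided scaling in \eqref{gausstails} to a one-sided one, and then to control the remaining supremum by a dyadic decomposition together with the reflection principle.

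First I would set up the representation $\bes0_s = |W_s|$, where $W=(W^1,W^2,W^3)$ is a Brownian motion in $\R^3$ with independent coordinates normalized so that $\E[(W^i_t)^2]=2t$; with $\diffd\langle B\rangle_s = 2\,\diffd s$ the Euclidean norm of such a $W$ solves exactly the SDE \eqref{sde bessel}, so this identification is legitimate. From it I extract two facts. First, for any $T>0$ and $a>0$, since $\sup_{s\le T}\bes0_s \le \sum_{i=1}^3 \sup_{s\le T}|W^i_s|$, a union bound and the reflection principle applied to each coordinate yield $\P\big(\sup_{s\le T}\bes0_s > a\big) \le c_1 e^{-a^2/(c_2 T)}$ for some universal constants $c_1,c_2>0$. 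Second, the process $(u\bes0_{1/u})_{u>0}$ is again a Bessel-$3$ process started from $0$, because the same is true of $W$ (time inversion of Brownian motion) and the norm commutes with this scaling.

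Next I would split the supremum defining $G$ at $s=1$: for $s\ge1$ the denominator equals $s^{1/2+\epsilon}$ and for $0<s\le1$ it equals $s^{1/2-\epsilon}$, so $G=\max(G_+,G_-)$ with $G_+:=\sup_{s\ge1}\bes0_s/s^{1/2+\epsilon}$ and $G_-:=\sup_{0<s\le1}\bes0_s/s^{1/2-\epsilon}$. The change of variables $s=1/u$ together with the inversion invariance shows $G_-\overset{(d)}{=}G_+$, so it suffices to bound $\P(G_+>x)$. For this I use a dyadic decomposition over the blocks $s\in[2^n,2^{n+1}]$, $n\ge0$, on which $s^{1/2+\epsilon}\ge 2^{n(1/2+\epsilon)}$, giving
\[
\P(G_+>x)\le \sum_{n\ge0}\P\Big(\sup_{s\le 2^{n+1}}\bes0_s > x\,2^{n(1/2+\epsilon)}\Big).
\]
Inserting $T=2^{n+1}$ and $a=x\,2^{n(1/2+\epsilon)}$ into the first fact, the exponent becomes proportional to $x^2\,2^{2n\epsilon}$, so $\P(G_+>x)\le c_1\sum_{n\ge0}e^{-c_3 x^2 2^{2n\epsilon}}$ for some $c_3>0$. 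The $n=0$ term is $c_1 e^{-c_3 x^2}$; for $x\ge1$ the remaining terms are bounded by $e^{-c_3 x^2}\sum_{n\ge1}e^{-c_3(2^{2n\epsilon}-1)}$, a convergent sum, while for $x\le1$ the probability is trivially at most $1\le e^{c_3}e^{-c_3 x^2}$. Hence $\P(G_+>x)\le c_4 e^{-c_3 x^2}$ for all $x\ge0$, and since $\P(G>x)\le\P(G_+>x)+\P(G_->x)$ with $G_-\overset{(d)}{=}G_+$, the statement follows.

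The only genuinely delicate point is the time-inversion identity and, relatedly, the behaviour of the ratio near $s=0$; this is precisely why I would establish the three-dimensional Brownian representation at the outset, since it makes that invariance immediate (it can alternatively be cited). The reflection-principle estimate and the summation of the dyadic series are entirely routine.
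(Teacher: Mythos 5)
Your proof is correct, but it follows a somewhat different route from the paper's. The paper also splits at $s=1$, but then covers $(0,1]$ by the intervals $(\tfrac1n,\tfrac1{n-1}]$ and $(1,\infty)$ by $(n,n+1]$, uses Brownian scaling on each block to reduce everything to the single quantity $\sup_{s\in(1,2]}\bes 0_s$, whose Gaussian tail it simply asserts, and then sums the series $\sum_n e^{-c_4 z^2 n^{2\epsilon}}$. You instead reduce the small-time half to the large-time half by the time-inversion invariance of the Bessel-3 process (justified through the representation $\bes 0_s=|W_s|$ with $W$ a three-dimensional Brownian motion, which is consistent with the SDE \eqref{sde bessel} and the normalization \eqref{normalize}), prove the fixed-horizon bound $\P\big(\sup_{s\le T}\bes 0_s>a\big)\le c_1 e^{-a^2/(c_2T)}$ from the reflection principle applied coordinatewise, and then run a dyadic block decomposition, summing $\sum_n e^{-c_3x^2 2^{2n\epsilon}}$. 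The underlying mechanism is the same in both arguments (blockwise union bound, a Gaussian tail for a supremum over a fixed window obtained by scaling, and a convergent series whose constants depend on $\epsilon$), but your version is more self-contained in that it actually derives the supremum tail estimate rather than quoting it, at the cost of invoking the three-dimensional representation and time inversion, which the paper's argument avoids; your distributional identity $G_-\overset{(d)}{=}G_+$ and the summation estimates are all correct as written.
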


\begin{proof}
We do this in two parts, first considering the supremum over 
$s\in(0,1]$. We have
\begin{equation}
\P\bigg(\sup_{s\in(0,1]} \frac{\bes 0_s}{s^{1/2-\epsilon}} > z\bigg) 
\leq \sum_{n=2}^\infty 
\P\bigg(\sup_{s\in\big(\tfrac{1}{n},\tfrac{1}{n-1}\big]}
\frac{\bes 0_s}{s^{1/2-\epsilon}} 
 > z\bigg).
\end{equation}
By scaling, this equals
\begin{equation}
\sum_{n=2}^\infty\P\bigg(\sup_{s\in\big(1,\tfrac{n}{n-1}\big]} 
\frac{\bes 0_s}{s^{1/2-\epsilon}} > zn^\epsilon\bigg) \leq 
\sum_{n=2}^\infty\P\bigg(\sup_{s\in(1,2]} \bes 0_s > zn^\epsilon\bigg).
\end{equation}
Now note that there exist $c_3>0$ and $c_4>0$
such that $\P\big(\sup_{s\in(1,2]} \bes 0_s > z\big) \leq c_3 \exp\big[{-c_4
z^2}\big]$ for all $z>0$, so
\begin{equation}
\P\bigg(\sup_{s\in(0,1]} \frac{\bes 0_s}{s^{1/2-\epsilon}} > z\bigg) 
\leq c_3 \sum_{n=2}^\infty e^{-c_4 z^2 n^{2\epsilon} },
\end{equation}
and it is an easy exercise to show that there exist $c_1$ and $c_2$ (with
$c_1$ depending on $\epsilon$) such that $c_3\sum_{n=2}^\infty e^{-c_4
z^2n^{2\epsilon}} \leq c_1 e^{-c_2 z^2}$.

Similarly for $s\in(1,\infty)$,
\begin{equation}
\P\bigg(\sup_{s\in(1,\infty)} \frac{\bes 0_s}{s^{1/2+\epsilon}} > 
z\bigg) \leq \sum_{n=1}^\infty 
\P\bigg(\sup_{s\in(n,n+1]}\frac{\bes 0_s}{s^{1/2+\epsilon}} > z\bigg).
\end{equation}
By scaling, this equals
\begin{equation}
\sum_{n=1}^\infty\P\bigg(\sup_{s\in(1,\tfrac{n+1}{n}]} 
\frac{\bes 0_s}{s^{1/2+\epsilon}} > zn^\epsilon\bigg) \leq 
\sum_{n=1}^\infty\P\bigg(\sup_{s\in(1,2]} \bes 0_s > zn^\epsilon\bigg).
\end{equation}
and the end of the argument is the same as in the previous case.
\end{proof}

\providecommand{\bysame}{\leavevmode\hbox to3em{\hrulefill}\thinspace}
\providecommand{\MR}{\relax\ifhmode\unskip\space\fi MR }
% \MRhref is called by the amsart/book/proc definition of \MR.
\providecommand{\MRhref}[2]{%
  \href{http://www.ams.org/mathscinet-getitem?mr=#1}{#2}
}
\providecommand{\href}[2]{#2}

\end{document}